\crefname{hypothesis}{Hypothesis}{Hypotheses}
\title{Fisher-Rao Gradient Flow: Geodesic Convexity and Functional Inequalities
\thanks{Submitted to the editors DATE. The authors are in alphabetical order.
\funding{JAC was supported by the Advanced Grant Nonlocal-CPD (Nonlocal PDEs for Complex Particle Dynamics: Phase Transitions, Patterns and Synchronization) of the European Research Council Executive Agency (ERC) under the European Union’s Horizon 2020 research and innovation programme (grant agreement No.~883363). JAC was also partially supported by the EPSRC grants EP/T022132/1 and EP/V051121/1 and by the “Maria de Maeztu” Excellence Unit IMAG, reference CEX2020-001105-M, funded by MCIN/AEI/10.13039/501100011033/. YC acknowledged support from the Courant Instructorship. DZH was supported by National Natural Science Foundation of China through grant 12471403 and the Fundamental Research Funds for the Central Universities of China. }}}
\author{
José A. Carrillo\thanks{Mathematical Institute, University of Oxford, Oxford OX2 6GG, UK (\email{carrillo@maths.ox.ac.uk}).}
\and
Yifan Chen\thanks{Department of Mathematics, University of California, Los Angeles, CA, USA (\email{yifanchen@math.ucla.edu})}
\and 
Daniel Zhengyu Huang\thanks{Beijing International Center for Mathematical Research,  Center for Machine Learning Research, Peking University, Beijing, China (\email{huangdz@bicmr.pku.edu.cn}).}
\and
Jiaoyang Huang\thanks{University of Pennsylvania, Philadelphia, PA, USA (\email{huangjy@wharton.upenn.edu}).}
\and
Dongyi Wei\thanks{School of Mathematical Sciences, Peking University, Beijing, China (\email{jnwdyi@pku.edu.cn}).}
}
\newcommand*{\addFileDependency}[1]{
  \typeout{(#1)}
  \@addtofilelist{#1}
  \IfFileExists{#1}{}{\typeout{No file #1.}}
}
\newcommand{\N}{\mathcal{N}}
\newcommand{\G}{\mathcal{G}}
\newcommand{\E}{\mathbb{E}}
\newcommand{\calH}{\mathcal{H}}
\newcommand{\calD}{\mathcal{D}}
\newcommand{\PP}{\mathcal{P}}
\newcommand{\EE}{\mathcal{E}}
\newcommand{\M}{M}
\newcommand{\R}{\mathbb{R}}
\newcommand{\bigO}{\mathcal{O}}
\newcommand{\dd}{\mathrm{d}}
\newcommand{\rhopo}{{\rho^{*}}}
\newcommand{\thetapo}{{\theta^{*}}}
\newcommand{\dualf}{{\bar{f}}}
\newtheorem{condition}{Condition}[section]
\newenvironment{newremark}[1]{%
    \begin{remark}#1}{%
    \Endofdef\end{remark}%
}
\newcommand{\xqed}[1]{%
    \leavevmode\unskip\penalty9999 \hbox{}\nobreak\hfill
    \quad\hbox{\ensuremath{#1}}}
\newcommand{\Endofdef}{\xqed{\lozenge}}
\definecolor{darkred}{rgb}{.6,0,0}
\definecolor{darkblue}{rgb}{0,0,.7}
\definecolor{darkgreen}{rgb}{0,.7,0}
\definecolor{darkbrown}{rgb}{0.8,0.4,0.4}
\begin{document}

\maketitle
\begin{abstract}
The dynamics of probability density functions have been extensively studied in computational science and engineering to understand physical phenomena and facilitate algorithmic design. Of particular interest are dynamics formulated as gradient flows of energy functionals under the Wasserstein metric. The development of functional inequalities, such as the log-Sobolev inequality, plays a pivotal role in analyzing the convergence of these dynamics.
This paper aims to extend the success of functional inequality techniques to dynamics that are gradient flows under the Fisher-Rao metric, with various $f$-divergences serving as energy functionals. Such dynamics take the form of nonlocal differential equations, for which existing analyses critically rely on explicit solution formulas in special cases.
We provide a comprehensive study of functional inequalities and the relevant geodesic convexity for Fisher-Rao gradient flows under minimal assumptions. A notable feature of our functional inequalities is their independence from the log-concavity or log-Sobolev constants of the target distribution. Consequently, the convergence rate of the dynamics (assuming well-posedness) remains uniform across general target distributions.
\end{abstract}

\begin{keywords}
Gradient flow, Fisher-Rao metric, Functional inequality, Bayesian inference
\end{keywords}

\begin{AMS}
  62F15,  39B62,  58E10
  \end{AMS}

\tableofcontents

\section{Introduction}
The study of dynamics of probability density functions has proven invaluable across various disciplines, including statistical physics, developmental biology, and the design of efficient algorithms for uncertainty quantification. A particularly important application is sampling a target positive probability density $\rhopo: \mathbb{R}^{d} \to \mathbb{R}$ known up to normalization constants:
\begin{equation}
    \rhopo(\theta) \propto \exp(-V(\theta)),
\end{equation}
in terms of a potential $V(\theta)$. For example, such sampling problems arise in climate science for calibrating climate models~\cite{isaac2015scalable,schneider2017earth,huang2022iterated,lopez2022training}, engineering for decision making and risk management~\cite{yuen2010bayesian,cui2016dimension,ghattas2021learning,cao2022bayesian}, tissue growth and cell populations models in biological engineering~\cite{Hines2014DeterminationOP,simpson2020practical,FCB23}, and machine learning for quantifying prediction uncertainties~\cite{murphy2012machine,hernandez2015probabilistic,maddox2019simple,yang2021b,chen2021solving}.
The demand from these diverse applications has spurred extensive research into efficient sampling algorithms, a subject explored across applied mathematics, computer science, machine learning, and statistics.

Numerous approaches, based on constructing dynamics of probability density functions, have been proposed in the literature to address the sampling problem. The most widely used examples in Bayesian inference are sequential Monte Carlo (SMC) \cite{doucet2009tutorial} and Markov chain Monte Carlo (MCMC) \cite{brooks2011handbook}. A classical instance in the MCMC category (in continuous time) is the Langevin dynamics \cite{pavliotis2014stochastic}
\begin{equation}
    {\rm d}X_t = \nabla \log \rho^*(X_t) {\rm d}t + \sqrt{2}{\rm d}B_t,
\end{equation}
and the corresponding Fokker-Planck equation
\begin{equation}
\label{eqn-Wasserstein-gradient-flow}
    \frac{\partial \rho_t}{\partial t} + \nabla \cdot (\rho_t \nabla \log \rho^*) = \Delta \rho_t,
\end{equation}
which satisfies that when $t \to \infty$, the law of $X_t$, namely $\rho_t$, converges to $\rho^*$, under certain regularity conditions on $\rho^*$. The study of the convergence rate has been of significant interest, as it characterizes the mixing time of the dynamics and the efficiency of the sampling algorithm~\cite{jordan1998variational,chewi2023logconcave}.

A remarkable observation is that the Fokker-Planck equation can be formulated as the \textit{gradient flow} of the Kullback–Leibler (KL) divergence in information theory, known as the Boltzmann relative entropy in kinetic theory \cite{To}, with respect to the Wasserstein metric, according to the seminal work of Jordan, Kinderlehrer, and Otto \cite{jordan1998variational}. For this reason, \eqref{eqn-Wasserstein-gradient-flow} is referred to as the \textit{Wasserstein gradient flow} of the KL divergence. This gradient flow structure has profoundly influenced the analysis of the dynamics \cite{carrillo2001entropy,CMV03,CMV06,otto2000generalization} and the design of new sampling algorithms \cite{wibisono2018sampling,bernton2018langevin, chen2023sampling}. 

The goal of this paper is to systematically extend the success of the analysis to \textit{Fisher-Rao gradient flows} which are nonlocal differential equations of great interest in statistics, information geometry and evolutionary game theory. These flows have the potential to achieve fast, universal convergence rates for any target distribution, in contrast to Wasserstein gradient flows. In \cref{subsec-Wasserstein gradient flows}, we review the results for Wasserstein gradient flows, including functional inequalities (e.g., the log-Sobolev inequality), displacement convexity, and their induced convergence rates of the dynamics. \cref{subsec-Fisher-Rao gradient flows} introduces Fisher-Rao gradient flows,  explaining their significance in sampling applications and the existing methods used to analyze their convergence. \cref{subsec-Main results} presents a summary of our main results for a comprehensive study of geodesic convexity and functional inequalities in the Fisher-Rao setting. We demonstrate the favorable, uniform convergence rates of Fisher-Rao gradient flows across general target distributions. Related works are discussed in \cref{subsec-Related works}, and the organization of this paper is outlined in \cref{subsec-Organization}.

\subsection{Wasserstein gradient flows: Functional inequalities and convexity}
\label{subsec-Wasserstein gradient flows}
For the scope of this paper, we adopt the formal approach by Otto \cite{otto2001geometry} to define gradient flows of probability densities. To do so, consider the probability space with smooth positive densities:
\begin{equation}
\label{eqn-smooth-positive-densities}
\mathcal{P} = \Bigl\{\rho \in C^{\infty}(\R^{d}) : \int \rho(\theta) \mathrm{d}\theta = 1 ,\, \rho > 0\Bigr\}\, .
\end{equation}
The Riemannian metric at $\rho$ is denoted by $g_{\rho}$ such that
\begin{equation}
    g_{\rho}(\sigma_1,\sigma_2) = \langle M(\rho)\sigma_1, \sigma_2\rangle \ \text{for}\ \sigma_1, \sigma_2 \in T_{\rho}\PP,
\end{equation}
where $\langle\cdot,\cdot\rangle$ is the duality pairing between the cotangent space $T^*_{\rho}\PP$ and tangent space $T_{\rho}\PP$, and can be identified as the $L^2$ inner product given both $\sigma_1, \sigma_2$ are classical functions. Here, $M(\rho): T_{\rho}\PP \to T_{\rho}^*\PP$ is a metric tensor. Given an energy functional $\mathcal{E}: \mathcal{P} \to \mathbb{R}$, the gradient flow under the metric tensor $M(\rho)$ is
\begin{align}
\label{eq:gf}
\frac{\partial \rho_t}{\partial t}
= -{\rm grad}_M \EE(\rho_t) = -\M(\rho_t)^{-1} \frac{\delta \EE}{\delta \rho}\Bigr|_{\rho_t},
\end{align}
where $\frac{\delta \EE}{\delta \rho}$ is the first variation of the energy functional $\EE$ such that
\begin{align}
\label{eqn-first-variation-def}
\Bigl\langle \frac{\delta \mathcal{E}}{\delta \rho}, \sigma \Bigr\rangle
=  \lim_{\epsilon \rightarrow 0} \frac{\mathcal{E}(\rho + \epsilon \sigma) - \mathcal{E}(\rho)}{\epsilon},
\end{align}
and we require $\mathbb{E}_{\rho}[\frac{\delta \mathcal{E}}{\delta \rho}] = 0$ so that the first variation is unique.

By setting the energy function as the KL divergence 
\begin{equation}
\label{eq:KL}
    \EE(\rho) = \text{KL}[\rho\Vert \rho^*] = \int \rho \log \frac{\rho}{\rho^*}{\rm d}\theta,
\end{equation} and using the Wasserstein metric tensor $M(\rho)^{-1}\psi = -\nabla \cdot (\rho \nabla  \psi)$, we obtain the Wasserstein gradient flow of the KL divergence:
\begin{equation}
\label{eq:W-gf}
    \frac{\partial \rho_t}{\partial t}  = -{\rm grad}_M  \EE(\rho) = \nabla\cdot \Bigl(\rho_t \nabla\log\frac{\rho_t}{\rhopo}\Bigr),
\end{equation}
which attains the same form as the Fokker-Planck equation \eqref{eqn-Wasserstein-gradient-flow}.

The gradient flow structure of the Fokker-Planck equation naturally leads to Riemannian optimization interpretation for understanding its convergence \cite{otto2000generalization}. In fact, along the gradient flow, it holds that
\begin{equation}
    \frac{\dd}{\dd t} (\EE(\rho_t) - \EE(\rho^*))  =  - g_{\rho_t}(\textrm{grad}_M\EE(\rho_t), \textrm{grad}_M\EE(\rho_t)).
\end{equation}
If we have the following \textbf{gradient dominance} condition (also known as the Polyak-Łojasiewicz inequality~\cite{polyak1963gradient}):
\begin{equation}
\label{eq:intro-GDC-functional}
 \exists \alpha >0, \ g_{\rho}(\textrm{grad}_M\EE(\rho), \textrm{grad}_M\EE(\rho))
    \geq 2\alpha ( \EE(\rho) - \EE(\rhopo) ), \textrm{ for any } \rho \in \mathcal{P},
\end{equation}
then, it is straightforward to derive $\EE(\rho_t) - \EE(\rhopo) \leq  \exp(-2\alpha t)\bigl(\EE(\rho_0) - \EE(\rhopo)\bigr)$. In the case of $\EE(\rho) = \text{KL}[\rho\Vert \rho^*]$ and  $M(\rho)^{-1}\psi = -\nabla \cdot (\rho \nabla  \psi)$, the gradient dominance condition is equivalent to
\begin{equation}
\label{eq:intro-logSoblev.0}
\exists \alpha >0 \textrm{ s.t. }
    \int \rho \bigl|\nabla\log\frac{\rho}{\rhopo}\bigr|^2 \dd\theta \geq 2\alpha \int \rho\log\frac{\rho}{\rhopo} \dd\theta, \textrm{ for any } \rho \in \mathcal{P}.
\end{equation}
This is precisely the log-Sobolev inequality~\cite{gross1975logarithmic,gross1975hypercontractivity} for $\rhopo$ and $f = \sqrt{\frac{\rho}{\rhopo}}$:
\begin{equation}
\label{eq:intro-logSoblev}
    \int \bigl|\nabla f(\theta)\bigr|^2\rhopo(\theta)\dd\theta \geq \frac{\alpha}{2} \int f(\theta)^2\log\Bigl(\frac{f(\theta)^2}{\int f(\theta')^2 \rhopo(\theta')\dd\theta'}\Bigr) \rhopo(\theta)\dd\theta.
\end{equation}
A sufficient (although not necessary) condition for the gradient dominance condition is the strong \textbf{geodesic convexity}:
\begin{align}
    \label{eq:alpha-uniformly geodesic convex}
         \exists \alpha >0,\ g_{\rho}(\textrm{Hess}_M \EE(\rho) \sigma, \sigma) \geq \alpha g_{\rho}(\sigma, \sigma),\textrm{ for any } \rho \in \mathcal{P}, \sigma \in T_{\rho}\PP,
    \end{align} where $\textrm{Hess}_M \EE(\rho)$ is the Riemannian Hessian of $\EE$. For the Wasserstein case where $M(\rho)^{-1}\psi = -\nabla \cdot (\rho \nabla  \psi)$, the geodesic convexity is also known as the displacement convexity \cite{mccann1997convexity}. The Bakry-Emery theorem~\cite{bakry1985diffusions, bakry2014analysis} establishes the connection between the displacement convexity, the log-concavity of $\rho^\star$, and the Ricci curvature of the sample space, and conditions for general convex Sobolev inequalities involving different relative entropies or divergences were established in \cite{To,AMTU}. In summary, these functional inequalities and convexity properties play important roles for studying the convergence of dynamics arising from Wasserstein gradient flows \cite{mccann1997convexity,CMV03,ambrosio2005gradient}.

We note that while the above explanations regarding the geometry and gradient flow structures are mostly formal, they can be made rigorous \cite{ambrosio2005gradient,Santambook}. 
On the other hand, it is also quite important to strategically use the geometric structure to formally derive the desired inequalities and then prove them rigorously. 
Once well-posedness of the dynamics has been established, these inequalities can be used to establish their convergence.
In this work, we focus on rigorously proving the functional inequalities.

\subsection{Analysis of Fisher-Rao gradient flows}
\label{subsec-Fisher-Rao gradient flows}
In this paper, our main focus is on the gradient flow dynamics with respect to the Fisher-Rao metric \cite{rao1945information,amari2016information,ay2017information,laschos2019geometric}, which takes the following form
\begin{equation}
\label{eq:FR-metric}
    g_{\rho}^{\mathrm{FR}}(\sigma_1,\sigma_2)=\int \frac{\sigma_1\sigma_2}{\rho} \dd\theta, \text{ for } \sigma_1,\sigma_2 \in T_{\rho}\PP.
\end{equation}
The corresponding metric tensor is $M^{\mathrm{FR}}(\rho)^{-1}\psi = \psi - \mathbb{E}_{\rho}[\psi]$. Historically, this metric was originally referred to as the spherical Hellinger metric, since it arises as the restriction of the Hellinger metric from the space of positive measures to the space of probability measures~\cite{hellinger1909neue}. More recently, it has become widely known as the Fisher–Rao metric, which is the terminology we adopt in this work; see \cref{remark:Fisher-Rao-vs-Hellinger} for details.

As an example, the Fisher-Rao gradient flow of the KL-divergence can be expressed as follows:
\begin{equation}
\label{eq:FR-gf}
    \frac{\partial \rho_t}{\partial t} = -{\rm grad}_{\rm FR} {\rm KL}[\rho_t\Vert\rhopo] = -\rho_t \Bigl(
 \log \rho_t - \log \rhopo - \E_{\rho_t}[ \log \rho_t -
\log \rhopo] \Bigr).
\end{equation}
Although this Fisher-Rao gradient flow takes the form of an ODE at each $\theta$, it contains a \textit{nonlocal} interaction term $\mathbb{E}_{\rho_t}[\log \rho_t - \log \rho^*]$ that conserves the mass. It is nontrivial to analyze such nonlocal terms. 
Recently, convergence of the above Fisher-Rao gradient flow \eqref{eq:FR-gf}, namely
\begin{equation}
\label{eq:KL-FR-GF-convergence}
    {\rm KL}[\rho_t\Vert \rhopo] = \bigO(\exp(-t)),
\end{equation}
has been established in \cite{lu2019accelerating,chen2023sampling,lu2022birth,domingo2023explicit}. We highlight that the exponential convergence rate here is uniform and independent of $\rho^*$. This characteristic stands in sharp contrast to the Wasserstein gradient flow~\eqref{eq:W-gf}, where the convergence rate $\bigO(\exp(-\alpha t))$ depends on the log-Sobolev constant $\alpha$; this constant depends on $\rho^*$ and can be very small for anisotropic or multimodal distributions. As a consequence, the uniform convergence rate of the Fisher-Rao gradient flow makes it a potentially desirable dynamics for sampling applications; see \cite{lu2019accelerating,lu2022birth,huang2022efficient,lindsey2022ensemble,chen2024efficient,maurais2024sampling}. 

Nevertheless, most existing proofs of the uniform convergence of the Fisher-Rao gradient flow~\eqref{eq:FR-gf} rely on the explicit formula of the solution
\begin{equation}
    \rho_t(\theta) \propto \rho_0(\theta)^{e^{-t}}\rhopo(\theta)^{1 - e^{-t}},
\end{equation}
which is specific to the KL divergence as the energy functional. Moreover, the convergence results require strong assumptions on $\rho_0$. This prevents us from extending the convergence framework of Fisher-Rao gradient flows to general energy functionals and initial conditions $\rho_0$.


We note that there has also been work using convexity to study the convergence of Fisher-Rao gradient flows. When the energy functional $\EE(\rho)$ is  
strongly relatively convex (in the sense of \cite{lu2018relatively}) with respect to the KL divergence, namely
\begin{equation}
\label{eq:rel-convex}
   \EE(\rho') \geq \EE(\rho) + \int \frac{\delta \EE}{\delta \rho}\Bigr|_{\rho}(\rho' - \rho )\dd\theta  + \alpha \rm{KL}[\rho'\Vert\rho],  \quad \forall \rho,\,\rho' ,
\end{equation}
for some $\alpha>0$, the Fisher-Rao gradient flow can be shown to converge exponentially fast \cite[Theorem 1]{yao2024minimizing} in reverse KL divergence: ${\rm KL}[\rhopo \Vert \rho_t] \leq e^{-\alpha t}{\rm KL}[\rhopo \Vert \rho_0]$. In particular, when the energy functional is the KL divergence itself, the strong relative convexity condition \eqref{eq:rel-convex} holds with $\alpha = 1$. Thus, convergence of ${\rm KL}[\rhopo \Vert \rho_t]$ for \eqref{eq:FR-gf} is guaranteed; however, convergence in the original KL divergence ${\rm KL}[\rho_t \Vert \rhopo]$ is not covered in this framework. We note that convergence of the discrete iteration of the Fisher-Rao gradient flow and its connection to mirror descent has also been studied in \cite{chopin2023connection} based on techniques in \cite{aubin2022mirror}.

This paper aims to provide a more systematic study of convergence for the Fisher-Rao gradient flows with general $f$-divergences as energy functionals, based on functional inequalities and geodesic convexity that have proven successful for studying Wasserstein gradient flows. Here,
the $f$-divergences are defined as \begin{align}
D_f[\rho \Vert  \rhopo]  =\int\rhopo f\Bigl(\frac{ \rho}{ \rhopo}\Bigr)\,\dd\theta,
\end{align}
where $f(1)=0, f\in C^2(0, +\infty)$ and $f$ is convex. It is worthy noting that $f$-divergences are known as relative entropies in the context of statistical physics and kinetic theory, see for instance \cite{To,AMTU,HYKE}.

\subsection{Main results}
\label{subsec-Main results}
Firstly, we prove that the gradient dominance condition and the geodesic convexity do \textit{not} hold for the Fisher-Rao gradient flows of the KL divergence \eqref{eq:FR-gf}. This poses significant challenges for establishing the convergence of the dynamics using the functional inequality approach. The informal statement is presented below and the detailed analysis is in \cref{theorem:FR-displacement-convex,theorem:FR-gradient-dominant-condition}. Related results have been studied in the concurrent work \cite[Lemma 6.2]{ZhuMielke2024} which focuses on the setting without the mass preservation constraint.
\begin{theorem}[informal]
    Under the Fisher-Rao geometry, the KL divergence is not geodesically convex for any $\rhopo$, even within a small neighborhood of $\rhopo$. Moreover, the gradient dominance condition is not satisfied for any $\rhopo$, also within any small neighborhood of $\rhopo$. Here the ``small neighborhood'' is measured in the KL divergence.
\end{theorem}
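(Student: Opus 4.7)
The plan is to construct explicit two-level density counterexamples that simultaneously defeat both the gradient dominance condition and the geodesic convexity inequality in arbitrarily small KL-neighborhoods of $\rho^*$. The core obstruction is that under the Fisher-Rao metric, perturbations which are $L^\infty$-large on a set of small $\rho^*$-measure can be made arbitrarily KL-close to $\rho^*$, yet they drive the variance-to-KL ratio to zero and the Hessian to $-\infty$.

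First, I would reduce both inequalities to concrete integral identities. Using $M^{\mathrm{FR}}(\rho)^{-1}\psi = \psi - \mathbb{E}_\rho[\psi]$, the Fisher-Rao gradient of $\mathrm{KL}[\rho\Vert\rho^*]$ is $\rho\bigl(\log(\rho/\rho^*) - \mathbb{E}_\rho[\log(\rho/\rho^*)]\bigr)$, and so $g_\rho^{\mathrm{FR}}(\mathrm{grad\,KL},\mathrm{grad\,KL}) = \mathrm{Var}_\rho[\log(\rho/\rho^*)]$. Gradient dominance with constant $\alpha$ therefore becomes $\mathrm{Var}_\rho[\log(\rho/\rho^*)] \geq 2\alpha\,\mathrm{KL}[\rho\Vert\rho^*]$. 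For the Hessian, I would use the sphere representation $u = \sqrt{\rho}$, under which Fisher-Rao geodesics are great circles $u_t = \cos(t) u_0 + \sin(t) v$ with $\int u_0 v\,\mathrm{d}\theta = 0$ and $\int v^2\,\mathrm{d}\theta = 1$. A direct calculation of $f(t) := \mathrm{KL}[\rho_t\Vert\rho^*]$ using $u_t'' = -u_t$ yields
\begin{equation}
f''(t) = 2\int (u_t')^2 \log(\rho_t/\rho^*)\,\mathrm{d}\theta + 4 - 2\,\mathrm{KL}[\rho_t\Vert\rho^*],
\end{equation}
which, since the FR-speed along the geodesic satisfies $g^{\mathrm{FR}}_{\rho_t}(\partial_t \rho_t, \partial_t \rho_t) = 4$, must exceed $4\alpha$ for $\alpha$-strong geodesic convexity.

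I would then build the counterexample. Fix small $\epsilon > 0$, a measurable set $A$ with $\rho^*(A) = \epsilon$, and a small $c > 0$, and set $\rho/\rho^* \equiv c$ on $A$ and $\rho/\rho^* \equiv d := (1-c\epsilon)/(1-\epsilon)$ on $A^c$, so that $\rho$ integrates to one. A direct computation gives $\mathrm{KL}[\rho\Vert\rho^*] = c\epsilon\log c + (1-c\epsilon)\log d$, which is bounded above by roughly $-\log(1-\epsilon)$ and can be shrunk arbitrarily by taking $\epsilon$ small, independently of $c$. On the other hand $\mathrm{Var}_\rho[\log(\rho/\rho^*)] = c\epsilon(\log c)^2 + (1-c\epsilon)(\log d)^2 - (\mathrm{KL})^2$; the first term vanishes as $c \to 0^+$ since $x(\log x)^2 \to 0$, and the remaining two terms cancel to leading order because $\mathrm{KL}\to -\log(1-\epsilon)$. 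Hence $\mathrm{Var}/\mathrm{KL} \to 0$ as $c\to 0^+$, ruling out gradient dominance for any $\alpha > 0$ inside any KL-ball about $\rho^*$.

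For the convexity obstruction at the same $\rho$, I would choose a tangent $v = a\chi_A\sqrt{\rho^*} + b\chi_{A^c}\sqrt{\rho^*}$ and solve $\int v^2\,\mathrm{d}\theta = 1$ together with $\int \sqrt{\rho}\,v\,\mathrm{d}\theta = 0$; for small $c$ these give $a^2 \approx 1/\epsilon$ and $b = O(\sqrt{c})$, so the dominant term $\int v^2 \log(\rho/\rho^*)\,\mathrm{d}\theta \approx a^2 \epsilon \log c \approx \log c$ is arbitrarily negative. Substituting into the $f''(0)$ identity sends the Hessian to $-\infty$ as $c \to 0^+$, while $\rho$ remains in any prescribed KL-neighborhood of $\rho^*$. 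The main technical point I expect to grapple with is that this $\rho$ and $v$ are not smooth and positive in the sense of $\mathcal{P}$ defined in \eqref{eqn-smooth-positive-densities}; I would remedy this by mollifying $\chi_A$ into a smooth bump of $\rho^*$-mass $\epsilon$ and replacing the constant $c$ on $A$ by a smooth positive profile of comparable size, noting that all of the integrals above depend continuously on such approximations so the divergent asymptotics persist.
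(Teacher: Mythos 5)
Your proposal is correct and follows essentially the same route as the paper: both arguments rest on the identical two-level counterexample $\rho=\rho^*\bigl(c\,\mathbbm{1}_A+d\,\mathbbm{1}_{A^c}\bigr)$ with the low level $c\to 0^+$ on a set of small $\rho^*$-mass, the observation that $\mathrm{KL}[\rho\Vert\rho^*]$ stays below roughly $-\log\bigl(1-\rho^*(A)\bigr)$ while the gradient norm $\mathrm{Var}_\rho[\log(\rho/\rho^*)]=c\epsilon(1-c\epsilon)\log^2(c/d)\to 0$ and the Hessian quadratic form tends to $-\infty$, followed by the same mollification step. The only cosmetic difference is that you derive the Hessian through the spherical representation $u=\sqrt{\rho}$ with great-circle geodesics rather than through \eqref{eq:grad-Hessian-EE}; your identity $f''(0)=2\int v^2\log(\rho/\rho^*)\,\mathrm{d}\theta+4-2\,\mathrm{KL}[\rho\Vert\rho^*]$ agrees exactly with \eqref{eq:Hess-KL} under the substitution $\sigma=2\sqrt{\rho}\,v$, so the two computations are equivalent.
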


Secondly, for general $f$-divergences as the energy functionals, we derive a sufficient condition on $f$ for geodesic convexity under the Fisher-Rao geometry, and a \textit{sufficient and necessary} condition \eqref{eq:intro_ns} on $f$ for the gradient dominance (see details in \cref{cond:gradient-dominant-condition-f}) to hold. These lead to general convergence results for the corresponding Fisher-Rao gradient flows.
We state our informal result below and the detailed result is in \cref{theorem:postive-uniform-convex,theorem:FR-gradient-dominant-condition-sn,theorem:FR-gradient-dominant-condition-suff}. We note that in contemporary work, \cite[Section 4]{mielke2025hellinger} studies the gradient dominance condition for a particular class of $f$-divergences with $f''(x) = x^p$.
\begin{theorem}[informal]
    Under the Fisher-Rao geometry, the $f$-divergence is geodesically convex for any $\rhopo$ when $xf'(x)$ is concave. Additionally under the condition $f''(1) > 0$, the $f$-divergence achieves $\alpha_f$-strong geodesic convexity for any $\rhopo$ with $\alpha_f > 0$ depending solely on $f$.

Moreover, for general $f$-divergences, the gradient dominance condition holds for any $\rhopo$ with an $\alpha_f > 0$ depending solely on $f$ if and only if
\begin{equation}
\label{eq:intro_ns}
\begin{split}
\exists \alpha > 0 \qquad 
 \Bigl(f'(y) - f'(x)\Bigr)^2  \geq  \alpha\left(\frac{1}{x} - \frac{1}{y}\right)\left( \frac{f(x)}{1 - x} - \frac{f(y)}{1 - y}\right)\quad  
\end{split}
\end{equation}
for all $0 < x < 1 < y$.
An sufficient condition is that there exists a constant $\alpha_s > 0$ such that $x^2f''(x) > \alpha_s$ for $0 < x \leq 1$. Note that the KL divergence does not satisfy the conditions.
\end{theorem}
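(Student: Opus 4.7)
The plan is to convert both parts of the theorem into one-dimensional inequalities in the density ratio $u = \rho/\rhopo$, then exploit convex-analytic reductions and a direct Hessian computation on the Fisher-Rao sphere. A direct first-variation computation yields $\mathrm{grad}_{\mathrm{FR}} D_f = \rho(f'(u) - \mathbb{E}_\rho[f'(u)])$ and $g_\rho^{\mathrm{FR}}(\mathrm{grad}, \mathrm{grad}) = \mathrm{Var}_\rho(f'(u))$, so the gradient dominance condition with constant $\alpha_f$ is equivalent to $\mathrm{Var}_\rho(f'(u)) \geq 2\alpha_f \int \rhopo f(u)\,\dd\theta$ for every $\rho \in \PP$. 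Both sides depend on $\rho$ only through the law $\nu := u_\# \rhopo$, reducing this further to
\begin{equation*}
\int t f'(t)^2 \,\dd\nu \;-\; \Bigl(\int t f'(t)\,\dd\nu\Bigr)^2 \;\geq\; 2\alpha_f \int f(t)\,\dd\nu
\end{equation*}
for every probability $\nu$ on $(0,\infty)$ with $\int t\,\dd\nu = 1$.

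The key observation is that the functional $L(\nu) := \int (tf'(t)^2 - 2\alpha_f f(t))\,\dd\nu - (\int tf'(t)\,\dd\nu)^2$ is concave in $\nu$, being the sum of a linear functional and the negative of a squared linear functional. A concave functional on a convex set attains its infimum at extreme points, and the extreme points of $\{\nu \in \mathcal{M}_+ : \int\dd\nu = 1,\ \int t\,\dd\nu = 1\}$ are supported on at most two atoms (a standard Carath\'eodory / moment-problem fact). Since $L(\delta_1) = 0$, the binding case is $\nu = p\delta_x + (1-p)\delta_y$ with $0 < x < 1 < y$ and $px + (1-p)y = 1$, forcing $p = (y-1)/(y-x)$. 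Substitution, together with the algebraic identity $\int tf'(t)^2\,\dd\nu - (\int tf'(t)\,\dd\nu)^2 = p(1-p)xy(f'(x)-f'(y))^2$ (which uses precisely the first-moment constraint), rearranges the two-atom case into the pointwise inequality stated in the theorem; necessity follows by choosing $\rho$ so that $u$ concentrates on $\{x, y\}$. For the additional sufficient condition $x^2 f''(x) \geq \alpha_s$ on $(0, 1]$, I would verify the two-point inequality directly by integrating the $f''$-bound to lower-bound $f'(y) - f'(x)$ and Taylor-expanding at $1$ to upper-bound $f(x)/(1-x)$ and $f(y)/(y-1)$.

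For the geodesic-convexity half, I identify $\PP$ with the positive orthant of the $L^2$-sphere $\{\phi : \int \phi^2 = 4\}$ via $\phi = 2\sqrt{\rho}$, so Fisher-Rao geodesics are great-circle arcs $\phi_t = \cos(\|\xi\| t/2)\phi_0 + (2/\|\xi\|)\sin(\|\xi\| t/2)\xi$ with $\xi \perp \phi_0$ in $L^2$. Using $\ddot\phi_0 = -(\|\xi\|^2/4)\phi_0$ so that $\ddot\rho_0 = \xi^2/2 - (\|\xi\|^2/2)\rho_0$, two differentiations of $D_f(\rho_t)$ at $t = 0$ produce
\begin{equation*}
\mathrm{Hess}_{\mathrm{FR}} D_f(\sigma, \sigma) = \int \bigl(uf''(u) + \tfrac{1}{2} f'(u)\bigr) \xi^2 \,\dd\theta \;-\; \frac{\|\xi\|_{L^2}^2}{2}\int uf'(u)\rhopo\,\dd\theta.
\end{equation*}
When $G(x) := x f'(x)$ is concave, Jensen's inequality gives $\int uf'(u)\rhopo\,\dd\theta = \int G(t)\,\dd\nu \leq G(1) = f'(1)$, while $G'$ non-increasing together with $f'$ non-decreasing forces $u f''(u) + f'(u)/2 \geq f'(1)/2$ for all $u > 0$. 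The two bounds combine to give $\mathrm{Hess}_{\mathrm{FR}} D_f \geq 0$; quantifying the slack under the extra assumption $f''(1) > 0$ yields a uniform positive constant $\alpha_f$, delivering strong geodesic convexity.

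The main obstacle is the sufficiency direction of the gradient-dominance characterization, which amounts to reducing an infinite-dimensional, non-local, non-linear functional inequality to a clean two-variable pointwise condition. The creative step is recognizing the concave structure of $L$ as a functional of $\nu$; once this is in place, LP extreme-point theory delivers the two-atom reduction essentially for free. The geodesic-convexity estimate, by comparison, is more routine once the sphere parametrization is deployed and Jensen's inequality is applied to the nonlocal correction term.
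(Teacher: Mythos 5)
Your geodesic-convexity argument is, up to the change of variables $\phi=2\sqrt{\rho}$, identical to the paper's: your Hessian expression $\int\bigl(uf''(u)+\tfrac12 f'(u)\bigr)\xi^2\,\dd\theta-\tfrac12\|\xi\|^2\,\E_{\rho}[f'(u)]$ coincides with the paper's formula \eqref{eq:Hess-Df} (with $\xi^2=\sigma^2/\rho$), and the two estimates you then invoke --- Jensen applied to the concave $G(x)=xf'(x)$ to bound $\E_{\rhopo}[G(u)]\le f'(1)$, and the pointwise bound $uf''(u)+\tfrac12 f'(u)\ge \tfrac12 f'(1)$ --- are exactly the paper's inequality \eqref{eq:xf'(x)} and the nonnegativity of $h(x)=2xf''(x)+f'(x)-f'(1)$, including the same two-case analysis ($x\le 1$ via monotonicity of $h$, $x\ge1$ via monotonicity of $f'$) and the same quantification near $x=1$ when $f''(1)>0$. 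The sufficient condition $x^2f''(x)>\alpha_s$ is likewise handled in the same spirit as the paper (integrate $f''$ to lower-bound $f'(y)-f'(x)$ by a multiple of $\tfrac1x-\tfrac1y$, and use convexity to dominate $\tfrac{f(x)}{1-x}+\tfrac{f(y)}{y-1}$ by $f'(y)-f'(x)$).

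For the gradient-dominance characterization, however, you take a genuinely different and in fact sharper route than the paper, and your key step checks out. Writing everything in terms of $\nu=u_{\#}\rhopo$ with $\int\dd\nu=\int t\,\dd\nu=1$, your functional $L(\nu)$ is linear minus the square of a linear functional, hence concave, so on the discrete simplex its minimum is attained at a vertex of the polytope cut out by the two moment constraints; such vertices carry at most two atoms, and your identity $\int tf'(t)^2\dd\nu-(\int tf'(t)\dd\nu)^2=p(1-p)xy(f'(x)-f'(y))^2$ (which I verified uses only $px+(1-p)y=1$) turns the two-atom case exactly into Condition \ref{cond:n=2-GDC}. The paper instead freezes $M=\sum_i\rho_i f'(x_i)$ to linearize the objective, which adds a third equality constraint and only reduces the problem to \emph{three}-point configurations; it then needs two further technical lemmas (Lemmas \ref{cond-b-GDC} and \ref{lem-2-point-to-3-point}) to deduce the three-point inequality from the two-point one. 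Your observation that the unlinearized objective is already concave makes that entire third step unnecessary, which is a real simplification. What your proposal buys less of is rigor at the measure-theoretic level: Bauer's minimum principle requires compactness, and the feasible set of probability measures on $(0,\infty)$ with unit mean is not weakly compact, so the extreme-point argument is only immediate on finitely supported measures. You therefore still need the paper's Step 1 machinery (the exhaustion by bounded domains $\Omega_n$ with matched masses, piecewise-constant approximation, monotone convergence for the gradient term and Fatou for $D_f$, which can be $+\infty$) to pass from the discrete inequality to general smooth densities; this is a fillable but non-trivial gap that your write-up waves at. Two small further points: the degenerate one-atom vertex (which forces $x_i=1$ and gives $L=0$) should be recorded, and the factor $2\alpha_f$ versus $\alpha_f$ is a harmless normalization discrepancy with Condition \ref{cond:gradient-dominant-condition-f}.
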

The form of the sufficient and necessary condition may seem hard to parse at first glance. It can be derived from a special case of gradient dominance when $\rho$ and $\rhopo$ are two-point discrete densities; for more details see the proof for \cref{theorem:FR-gradient-dominant-condition-sn}.

Thirdly, we study the case of the KL divergence in further depth and seek other possible functional inequalities to establish the convergence, thereby extending previous results.
We prove a novel \textit{dual} type of gradient dominance conditions that allow us to obtain the convergence of the Fisher-Rao gradient flows for general $f$-divergences. To the best of our knowledge, such dual type of functional inequalities is not known in the Wasserstein case and it is a new property found in the Fisher-Rao case.

The informal statement is as follows; the detailed statement is in \cref{sec-Functional Inequality: Dual Gradient Dominance Condition}.  Related work \cite[Theorem 1]{yao2024minimizing} establishes that the KL divergence is a relatively strongly convex functional (in the sense of \cite{lu2018relatively}) with respect to the KL divergence itself, which leads to the convergence of the gradient flow under the reverse KL divergence, a case that can be covered by our result; see details in \cref{theorem:dual-ineq}.
\begin{theorem}[informal]
    For the Fisher-Rao gradient flow $\rho_t$ of an $f$-divergence, we consider its evolution under another $\bar{f}$-divergence. We define the dual gradient dominance condition on this flow as
    \begin{equation}
        \frac{\rm d}{{\rm d}t} D_\dualf[\rho_t\Vert \rhopo]\leq -\alpha_f \Bigl( D_\dualf[\rho_t \Vert \rhopo]  + D_f[\rho_t \Vert \rhopo] \Bigr).
    \end{equation}
    For $f\in C^2(0,+\infty)$ and $f''(1) > 0$, the above inequality holds with a constant $\alpha_f$ depending only on $f$ and independent of $\rho$ and $\rho^*$, if we take $\dualf(x) = \frac{(x-1)^2}{x}$. In this case, the dual divergence $D_{\dualf}$ corresponds to the reverse $\chi^2$ divergence. 

    Furthermore, for the KL divergence, if we take $\dualf(x) = xf(\frac{1}{x})$, which corresponds to the reverse KL divergence, the dual gradient dominance condition holds with an explicit $\alpha_f =1$. This can also be generalized to $f$-divergences with $f$ being a polynomial.
\end{theorem}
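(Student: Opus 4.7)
The plan is to unify all three parts via a single dissipation identity and then specialize the algebra to each setting. Setting $u_t = \rho_t/\rhopo$, the Fisher--Rao gradient flow of $D_f$ satisfies $\partial_t u_t = -u_t(f'(u_t) - c_t)$ with $c_t = \E_{\rho_t}[f'(u_t)]$, so
\begin{equation*}
\frac{\dd}{\dd t} D_\dualf[\rho_t \Vert \rhopo] = -\int \rho_t\, \dualf'(u_t)\,(f'(u_t) - c_t)\, \dd\theta.
\end{equation*}
Since $\int \rho_t(f'(u_t) - c_t)\,\dd\theta = 0$, I symmetrize in $(\theta, \theta')$ to obtain the manifestly non-negative dissipation
\begin{equation*}
-\frac{\dd}{\dd t} D_\dualf = \frac{1}{2}\iint \rho_t(\theta)\rho_t(\theta')\bigl(\dualf'(u) - \dualf'(u')\bigr)\bigl(f'(u) - f'(u')\bigr)\, \dd\theta\, \dd\theta'.
\end{equation*}

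For the KL case with $f(x) = x\log x$ and $\dualf(x) = xf(1/x) = -\log x$, I compute $\dualf'(u) - \dualf'(u') = (u-u')/(uu')$ and use the cancellation $\rho_t \rho_t'/(uu') = \rhopo \rhopo'$ to recast the dissipation as
\begin{equation*}
-\frac{\dd}{\dd t}D_\dualf = \frac{1}{2}\iint \rhopo \rhopo'(u-u')(\log u - \log u')\, \dd\theta\, \dd\theta' = \int \rhopo(u-1)\log u\, \dd\theta,
\end{equation*}
where the last step uses $\int \rhopo(u-1)\,\dd\theta = 0$ to collapse the symmetrized double integral. Splitting $\int \rhopo(u-1)\log u\,\dd\theta = \int \rho_t \log u\,\dd\theta - \int \rhopo \log u\,\dd\theta$ identifies it with $D_f + D_\dualf$, giving $\alpha_f = 1$ exactly. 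For polynomial $f$ with $\dualf(x) = xf(1/x)$, the same strategy applies: the symmetrized integrand factors as $(u-u')^2$ times a rational function whose denominator absorbs into the $\rho_t\rho_t'/(uu')^k = \rhopo \rhopo'/(uu')^{k-1}$ factors, leaving $\rhopo\rhopo'$ times a polynomial in $(u,u')$ that I match termwise against analogous double-integral representations such as $D_f = \tfrac{1}{2}\iint \rhopo\rhopo'(u-u')^2\,\dd\theta\,\dd\theta'$ (Pearson case). This extraction produces an explicit $f$-dependent $\alpha_f$, possibly together with non-negative bonus terms of the type $D_f D_\dualf$.

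For the general case $f \in C^2(0,\infty)$ with $f''(1) > 0$ and $\dualf(x) = (x-1)^2/x$, one has $\dualf'(u) - \dualf'(u') = (u-u')(u+u')/(u^2 u'^2)$, so
\begin{equation*}
-\frac{\dd}{\dd t} D_\dualf = \frac{1}{2}\iint \rhopo\rhopo' \,\frac{(u-u')(u+u')}{uu'}\bigl(f'(u) - f'(u')\bigr)\, \dd\theta\, \dd\theta'.
\end{equation*}
Following the linear-programming reduction used in Theorems~\ref{theorem:FR-gradient-dominant-condition-sn} and~\ref{theorem:FR-gradient-dominant-condition-suff}, the target bound $-\tfrac{\dd}{\dd t} D_\dualf \geq \alpha_f(D_\dualf + D_f)$ reduces to a two-point inequality in $(x,y) \in (0,\infty)^2$, subject to the normalization constraint inherited from $\int \rhopo u\,\dd\theta = 1$. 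The hypothesis $f''(1) > 0$ supplies a matching quadratic lower bound near $(x,y)=(1,1)$, while strict convexity of $f$ together with the boundary blow-up of $\dualf$ at $0$ and $\infty$ deliver a uniform positive ratio away from the diagonal. The hard part will be securing this positivity in the boundary regimes $x \to 0^+$ or $y \to \infty$, where $f$ may grow much slower than $\dualf$; the mass constraint couples the two extremal points of the LP reduction nontrivially, and tracking how the admissible set of $(x,y)$ shrinks as either variable approaches the boundary --- the same delicacy already encountered in Theorem~\ref{theorem:FR-gradient-dominant-condition-suff} --- is the technical crux.
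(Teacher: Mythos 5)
Your computation for the KL case is correct and essentially the same as the paper's proof of the second part of Theorem \ref{theorem:dual-ineq}: the dissipation evaluates exactly to $D_f + D_{\dualf}$, giving $\alpha_f = 1$. Your sketch for polynomial $f$ is also pointed in the right direction --- the paper computes the dissipation for $f''(x) = x^p$ in closed form as $\frac{1}{(p+1)(p+2)}\bigl(\int \frac{\rho^{p+2}}{(\rhopo)^{p+1}}\dd\theta \int \frac{(\rhopo)^{p+2}}{\rho^{p+1}}\dd\theta - 1\bigr)$, a product of the form $AB-1$ with $A-1$ and $B-1$ proportional to $D_f$ and $D_{\dualf}$, and the ``non-negative bonus terms of the type $D_f D_{\dualf}$'' you anticipate are exactly the cross term $(A-1)(B-1)$ discarded via Jensen's inequality. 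That part is a sketch but completable.

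The genuine gap is in the general case $\dualf(x) = (x-1)^2/x$, which is the main content of Theorem \ref{theorem:generalized-ineq}, and there your proposal defers the entire difficulty to an unproven two-point inequality and then explicitly concedes that the boundary regimes $x\to 0^+$, $y\to\infty$ are an unresolved ``technical crux.'' That is precisely where a proof is needed, and it is not clear the linear-programming reduction of Theorem \ref{theorem:FR-gradient-dominant-condition-sn} even applies here in the form you invoke: that reduction was tailored to the functional $\sum_i \rho_i(f'(x_i)-M)^2$, which is linear in $\rho$ once the moment $M$ is frozen, whereas your symmetrized dissipation couples $f'$ and $\dualf'$ asymmetrically and the right-hand side involves two different divergences. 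The paper avoids all of this by exploiting the specific structure $\dualf'(x) = 1 - x^{-2}$: since $\int \rho\,(\rhopo/\rho)^2\,\dd\theta = \mu^2$ is a single scalar, the nonlocal term collapses and the dissipation rewrites as $\int \bigl(f'(\tfrac{1}{\mu}) - f'(\tfrac{\rho}{\rhopo})\bigr)\bigl(\tfrac{(\rhopo)^2}{\rho^2} - \mu^2\bigr)\rho\,\dd\theta$, a purely local integrand parametrized by one number $\mu \geq 1$. Two pointwise convexity estimates (Lemmas \ref{lem:generalized-gdc-1} and \ref{lem:generalized-f''(1)}) then give the lower bounds by $D_f$ and by $\chi^2[\rhopo\Vert\rho]$, with a case split on whether $\mu \leq 1+\delta_f$ or not; no two-point inequality on the simplex, and hence no boundary analysis of the kind you flag, is ever needed. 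To repair your argument you would either need to carry out the boundary analysis you postponed (which is the whole theorem), or switch to the paper's scalarization via $\mu$.
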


With these established functional inequalities, it is straightforward to show the corresponding gradient flows converge to the target exponentially fast at the same rate, given that the gradient flows are well posed, allowing us to differentiate the energy functional along the dynamics. The main focus of this paper is the rigorous proof of the functional inequalities under appropriate sufficient and/or necessary conditions, while we leave the study of the well-posedness of the gradient flows for future work. 

\subsection{Related works} 
\label{subsec-Related works}
\subsubsection{Displacement convexity and functional inequalities} 
With respect to the Wasserstein metric, geodesic convexity is equivalent to the displacement convexity, a concept introduced by McCann in \cite{mccann1997convexity}.
Further exploration of displacement convexity on Riemannian manifolds and more general spaces can be found in \cite{otto2000generalization,cordero2001riemannian,otto2005eulerian,von2005transport,lott2009ricci,ohta2011displacement}.
Additionally, the work \cite{carrillo2010nonlinear} studies the displacement convexity concerning the generalized Wasserstein metric with nonlinear mobility. These concepts have been extended to more general settings such as graphs, discrete space, Markov chains with other metrics, see for instance \cite{EM14,EMM19,EFMM22}.

Various functional inequalities related to the Wasserstein metric have been developed, such as the Poincar\'e inequality, log-Sobolev inequality, and Talagrand's inequality; see \cite{poincare1890equations, talagrand1996transportation,otto2000generalization}. These functional inequalities can be used to study the convergence of Wasserstein gradient flows and related PDEs \cite{otto2000generalization}. Log-Sobolev type inequalities have been proved for general divergences or relative entropy and for general nonlinear and nonlocal McKean-Vlasov equations; see for instance \cite{AMTU,carrillo2001entropy,CMV03,ambrosio2005gradient}.
However, few functional inequalities have been developed for the Fisher-Rao gradient flow.

\subsubsection{Fisher-Rao gradient flow}

The Fisher-Rao metric is widely used in finite-dimensional parametric density spaces, where it reduces to the Fisher information matrix, a fundamental concept in information geometry~\cite{amari2016information,ay2017information}; see also \cite{rao1945information,FRmetricInfDim1991,srivastava2007riemannian}.
In probability density spaces, Fisher-Rao and Wasserstein metrics have been studied in combination, leading to the Wasserstein-Fisher-Rao (also known as Hellinger–Kantorovich) metric~\cite{liero2018optimal,kondratyev2016new,chizat2018interpolating,chizat2018unbalanced}. A key advantage of this framework is its ability to handle unnormalized probability densities that are not constrained to integrate to one, giving rise to unbalanced optimal transport, which has found applications across various domains.
The interpolation can also be defined via a generalization of the dynamical Benamou-Brenier formulation~\cite{benamou2000computational} by incorporating a source term measured by the Fisher-Rao metric in the continuity equation. The associated gradient flows~\cite{kondratyev2019spherical} and functional inequalities~\cite{kondratyev2020convex,mielke2025hellinger} have been studied following Otto's formalism. In the present work, we focus on the Fisher–Rao metric for normalized probability densities and the associated gradient flows preserve the probability mass.

The Fisher-Rao gradient flow of the KL divergence \eqref{eq:FR-gf} takes the form
\begin{equation*}
\frac{\partial \rho_t}{\partial t} = -\rho_t \Lambda_t, \qquad  \Lambda_t  =
\bigl( \log \rho_t - \log \rhopo\bigr) - \E_{\rho_t}[ \log \rho_t -
\log \rhopo].
\end{equation*}
It is a mean-field equation of birth-death type \cite{lu2019accelerating,lu2022birth}
where the birth-death rate $\Lambda_t$ contains a nonlocal interaction term that allows global movement of mass. Similar equations in discrete space, known as the replicator equations, have been studied in evolutionary game theory \cite{hofbauer2003evolutionary}. Projection of the Fisher-Rao gradient flows to a parametric family of distributions leads to natural gradient flows. Time discretizations of these flows, known as natural gradient descent algorithms, have been widely used in variational inference \cite{amari1998natural,lin2019fast,martens2020new,chen2023sampling}.

Fisher-Rao gradient flows have found applications in various fields.  For example, the corresponding birth-death process has been used in sequential Monte Carlo samplers to reduce the variance of particle weights \cite{del2006sequential}, and in the acceleration of Langevin sampling \cite{lu2019accelerating,lu2022birth} and training of neural networks \cite{rotskoff2019global}. The Fisher-Rao gradient flow of the $\chi^2$ divergence arises as a mean-field equation for ensemble MCMC methods that address multimodal distributions \cite{lindsey2022ensemble}. 
Kernel approximations of Fisher-Rao gradient flows have been proposed for sampling \cite{maurais2024sampling,zhu2024kernel}. Furthermore, parametric approximation of Fisher-Rao gradient flows employing Kalman's methodology \cite{kalman1960new,evensen1994sequential,julier1997new} has led to the development of efficient, derivative-free posterior approximation algorithms. These algorithms, based on Gaussian or Gaussian mixture approximations \cite{huang2022efficient,chen2024efficient,che2025stable}, have proven particularly useful for Bayesian inverse problems. Other applications in filtering \cite{halder2018gradient} and reinforcement learning \cite{kerimkulov2023fisher} have also been explored in the literature.

\subsection{Organization of this paper}
\label{subsec-Organization}
In \Cref{sec-Preliminaries: Gradient Flows of Probability Density Functions}, we introduce Fisher-Rao gradient flows associated with various $f$-divergence.
In \Cref{sec-Geodesic Convexity}, we present an overview of convex analysis and study the geodesic convexity of  the $f$-divergence in the Fisher-Rao geometry.
In \Cref{sec-Functional Inequality: Gradient Dominance Condition}, we examine the gradient dominance condition of  $f$-divergence under the Fisher-Rao metric.
In \Cref{sec-Functional Inequality: Dual Gradient Dominance Condition}, we develop the dual gradient dominance condition for analyzing the convergence properties of $f$-divergences that do not satisfy the gradient dominance condition.
We conclude in \Cref{sec-Conclusion}.

\section{Fisher-Rao Gradient Flows of $f$-divergences}
\label{sec-Preliminaries: Gradient Flows of Probability Density Functions}
We follow the approach in \cref{subsec-Wasserstein gradient flows} to derive the equation of Fisher-Rao gradient flows of $f$-divergences. Again, the derivation is formal, and our main goal is to analyze the relevant functional inequalities arising from such gradient flows, which allow us to establish convergence rates of the dynamics when they are well-posed.

We consider the energy functional to be the $f$-divergence
\begin{align}
\label{eqn-f-divergences}
\mathcal{E}(\rho) = D_f[\rho \Vert  \rhopo]  =\int\rhopo f\Bigl(\frac{ \rho}{ \rhopo}\Bigr)\,\dd\theta,
\end{align}
where $f(1)=0, f\in C^2(0, +\infty)$ and $f$ is convex. Here, the convexity of $f$ implies $$f(\rho/\rhopo)\geq f(1)+f'(1)(\rho/\rhopo-1),$$
which leads to
\begin{align}
\label{eq:D_f_lbound}
    D_f[\rho \Vert  \rhopo]  \geq \int\rhopo \Bigl(f(1)+f'(1)(\rho/\rhopo-1)\Bigr)\,\dd\theta \geq -\int \bigl| f'(1)( \rho - \rhopo)\bigr| \,\dd\theta  \geq -2\lvert f'(1)\rvert.
\end{align}
This means $f$-divergence is always well defined (with a possible value  $+\infty$).
Moreover, Jensen's inequality implies that $\rhopo$ is the unique minimizer. The KL-divergence
is a special $f$-divergence with $f = x\log x$.

For the $f$-divergence, its first variation with respect to $\rho$ is $f'\bigl(\frac{\rho}{\rhopo}\bigr) - \E_{\rho}[ f'(\frac{\rho}{\rhopo})]$ where we subtract a constant so that its mean is zero. Using the formula of the Fisher-Rao metric \eqref{eq:FR-metric}, we get the corresponding Fisher-Rao gradient flow as
\begin{align}
\label{eq:FR-gf-f}
\frac{\partial \rho_t}{\partial t}
= -\M(\rho_t)^{-1} \frac{\delta \EE}{\delta \rho}\Bigr|_{\rho_t} =  -\rho_t f'\Bigl(\frac{\rho_t}{\rhopo}\Bigr) + \rho_t \E_{\rho_t}\Bigl[ f'\Bigl(\frac{\rho_t}{\rhopo}\Bigr) \Bigr].
\end{align}
We note that the Fisher-Rao metric \eqref{eq:FR-metric} stands out as the unique metric, up to constants, that satisfies the diffeomorphism invariance property \cite{cencov2000statistical, ay2015information, bauer2016uniqueness}.
Under the Fisher-Rao metric, these gradient flows are expected to maintain this invariance.
The diffeomorphism invariance implies that the convergence property of Fisher-Rao gradient flows for a general target density $\rhopo$ aligns with that for the standard Gaussian target density, and may achieve a convergence rate independent of $\rhopo$. 


When $f(x)=x\log x$ which corresponds to the KL divergence, we get
\begin{equation}
    \frac{\partial \rho_t}{\partial t} = -\rho_t \Bigl(
 \log \rho_t - \log \rhopo - \E_{\rho_t}[ \log \rho_t -
\log \rhopo] \Bigr),
\end{equation}
which is the most commonly used Fisher-Rao gradient flow for sampling applications \cite{lu2019accelerating,chen2023sampling,lu2022birth,domingo2023explicit,yan2023learning}.
When $f(x) = (x-1)^2$ which corresponds to the $\chi^2$ divergence, we get
\begin{equation}
\label{eqn-chi-square-gradient-flow}
    \frac{\partial \rho_t}{\partial t} = -\rho_t \Bigl( \frac{\rho_t}{\rho^*}-\mathbb{E}_{\rho_t}[\frac{\rho_t}{\rho^*}]\Bigr),
\end{equation}
which has been used in the context of ensemble samplers \cite{lindsey2022ensemble}. 

Note that in \eqref{eqn-chi-square-gradient-flow}, evaluating the right-hand side appears to require knowledge of the normalization constant of $\rho^*$, which is typically unavailable in sampling applications. In fact, it has been demonstrated in \cite{chen2023sampling} that the KL divergence is unique among $f$-divergences, up to constants, in that gradient flows resulting from it do not depend on the normalization constant of the target distribution. Nevertheless, in \eqref{eqn-chi-square-gradient-flow}, the normalization constant appears as a multiplicative factor, and we can absorb it into the time variable and rescale the dynamics so that the right-hand side no longer depends on this constant; this approach changes the time scale. In \cite{lindsey2022ensemble}, ensemble approximation is employed with birth-death type updating rules to a slight variant of \eqref{eqn-chi-square-gradient-flow}, which does not require the normalization constant to implement.

\section{Geodesic Convexity in the Fisher-Rao Geometry}
\label{sec-Geodesic Convexity}
In this section, we explore the geodesic convexity of $f$-divergences in the Fisher-Rao geometry. We will present counterexamples to demonstrate that the KL divergence lacks geodesic convexity. Subsequently, we will provide sufficient conditions to identify the specific $f$-divergences that achieve geodesic convexity. The discussions in this section parallel those of displacement convexity in the Wasserstein geometry. 
The geometric calculations are primarily carried out formally, assuming the well-posedness of the geodesics and the gradient flow.
We first outline the preliminary concepts in the Euclidean space and abstract manifolds and then move to the Fisher-Rao geometry.
\subsection{Preliminaries on convex analysis} We start with introducing the basic concepts in the Euclidean space. Let $h$ be a convex function in $\mathbb{R}^N$, then for any $\theta_0, \theta_1 \in \mathbb{R}^N$ and $\lambda \in [0,1]$, it holds that
\begin{equation}
    h\bigl(\lambda\theta_0 + (1-\lambda)\theta_1\bigr) \leq \lambda h(\theta_0) + (1-\lambda)h(\theta_1).
\end{equation}
Furthermore suppose $h \in C^2(\mathbb{R}^N)$, then we have the first order characterization that
\begin{equation}
    h(\theta_1) \geq h(\theta_0) + \langle\nabla_\theta h(\theta_0),\theta_1 - \theta_0\rangle,
\end{equation}
and the second order characterization that the Hessian matrix $\nabla^2_{\theta} h(\theta) \succeq \alpha I$ for some $\alpha \geq 0$ and any $\theta \in \mathbb{R}^N$. When $\alpha > 0$, we call the function $h$ to be $\alpha$-strongly convex; in such case we have
\begin{equation}
\label{eq:alpha-convex-h}
    h(\theta_1) \geq h(\theta_0) + \langle\nabla_\theta h(\theta_0),\theta_1 - \theta_0\rangle + \frac{\alpha}{2}|\theta_1-\theta_0|^2.
\end{equation}
For a gradient flow in the Euclidean space, denoted by
$\frac{{\rm d}\theta_t}{{\rm d}t} = -\nabla_{\theta} h(\theta_t)$,
the evolution of the function values satisfy
$\frac{{\rm d}}{{\rm d}t} h(\theta_t) = -|\nabla_\theta h(\theta_t)|^2$.
\Cref{eq:alpha-convex-h} indicates that $h$ has a unique global minimizer, denoted by 
$\theta^*$. Using the strong convexity of $h$ we have
\begin{equation}
\label{eq:intro-GDC-f}
    h(\theta) - h(\thetapo) \leq \langle\nabla_\theta h(\theta), \theta - \thetapo \rangle - \frac{\alpha}{2} |\thetapo - \theta |^2 \leq \frac{1}{2\alpha} |\nabla_\theta h(\theta)|^2,
\end{equation}
where in the last inequality we used the fact that $\langle a, b\rangle\leq \frac{1}{2\alpha} |a|^2 + \frac{\alpha}{2}|b|^2$. Therefore, we obtain
\begin{equation}
    |\nabla_\theta h(\theta)|^2 \geq 2\alpha \bigl(h(\theta) - h(\thetapo)\bigr),
\end{equation}
which is referred to as the gradient dominance condition or the Polyak-Łojasiewicz inequality \cite{polyak1963gradient}. With this inequality, the gradient flow converges exponentially fast, as $\frac{{\rm d}}{{\rm d}t} h(\theta_t) \leq -2\alpha \bigl(h(\theta_t) - h(\thetapo)\bigr)$ leading to $h(\theta_t) - h(\thetapo) = \bigO\bigl(\exp(-2\alpha t)\bigr)$. In summary, the strong convexity in the Euclidean space leads to gradient dominance condition, which then implies the exponential convergence of the gradient flow.

Now, consider a Riemannian manifold $\mathcal{M}$. For $p \in \mathcal{M}$, the Riemannian metric is $g_p: T_p\mathcal{M} \times T_p\mathcal{M} \to \mathbb{R}$. Geodesics in the manifold generalizes the concept of straight lines in Euclidean spaces. A constant-speed geodesics connecting two points $p_0,p_1 \in \mathcal{M}$ can be characterized as the minimizer of the following variational problem:
\begin{equation}
    \mathcal{D}^2(p_0,p_1) = \inf_{p_t \in \mathcal{M}, v_t \in T_p\mathcal{M}} \{\int_0^1 g_{p_t}(v_t,v_t){\rm d}t: \ \frac{{\rm d}}{{\rm d}t} p_t = v_t, p_{t=0} = p_0, p_{t=1}=p_1\},
\end{equation}
where $\mathcal{D}$ is called the geodesic distance between $p_0$ and $p_1$, and $v_t$ denotes velocity. Such a geodesics satisfies $g_{p_t}(v_t,v_t) = \mathcal{D}^2(p_0,p_1)$ so is referred to as the constant-speed geodesics. 

A function $h: \mathcal{M} \to \mathbb{R}$ is called geodesically convex, if for any $p_0, p_1 \in \mathcal{M}$, $\lambda \in [0,1]$, it holds that
\begin{equation}
    h(p_\lambda) \leq \lambda h(p_0) + (1-\lambda)h(p_1),
\end{equation}
where $p_t, t\in [0,1]$ is the constant-speed geodesics connecting $p_0$ and $p_1$. 
The first order characterization of geodesic convexity is
\begin{equation}
    h(p_1) \geq h(p_0) + g_{p_0}\bigl(\text{grad}~h(p_0), v_0\bigr) + \frac{\alpha}{2}\mathcal{D}^2(p_0,p_1),
\end{equation}
where $\text{grad}$ is the gradient operator on the manifold, $v_0$ is the initial velocity of the constant-speed geodesics connecting $p_0$ and $p_1$. Here $h$ is called $\alpha$-strongly geodesically convex if $\alpha > 0$.
Furthermore, the second order characterization of the geodesic convexity is $\text{Hess}~ h(p_0) \succeq \alpha I$ where $\text{Hess}$ is the Hessian operator defined on the manifold, which satisfies
\begin{align}
         g_{p_0}\bigl(\textrm{Hess}\, h(p_0) v_0,v_0\bigr) = \frac{\dd^2 }{\dd t^2}\Big |_{\rm geod} h(p_0) 
    \end{align}
    where $p_t$ is a constant-speed geodesics that starts at $p_0$ with initial velocity vector $v_0$. Here we used the notation $\frac{\dd^2 }{\dd t^2}\Big |_{\rm geod}$ to specifically mean that the derivative is taken with respect to a constant geodesics $p_t$ at $t=0$. 

Similarly, as the Euclidean case, $\alpha$-strongly geodesic convexity implies the gradient dominance condition:
\begin{equation}
    g_p\bigl(\text{grad}\, h(p), \text{grad}\, h(p)\bigr) \geq 2\alpha \bigl(h(p) - h(p^*)\bigr),
\end{equation}
where $p^*$ is the global minimizer of $h$. Again, this inequality leads to the exponential convergence of gradient flows of $h$ on the manifold $\mathcal{M}$.

\subsection{Hessian operator in the Fisher-Rao geometry}
In this subsection, we derive the Hessian operator of energy functionals in the Fisher-Rao geometry, which will be used in the next subsection to analyze the geodesic convexity of $f$-divergences.

The Fisher-Rao metric in the probability density space $\mathcal{P}$ is
\begin{equation}
    g_{\rho}^{\mathrm{FR}}(\sigma_1,\sigma_2)=\int \frac{\sigma_1\sigma_2}{\rho} \dd\theta, \text{ for } \sigma_1,\sigma_2 \in T_{\rho}\PP.
\end{equation}
\begin{newremark}
    The Fisher-Rao metric can also be viewed as an infinite dimensional Euclidean metric restricted to the unit ball $\{\psi:\int \psi^2 = 1\}$, if we apply the transformation $\psi = \sqrt{\rho}$; see, e.g., \cite{halder2018gradient}. We note that one could derive the formulas of the geodesic convexity conditions and the relevant functional inequalities using the transformed coordinate $\psi$ by exploiting the Euclidean structure. In this paper, we choose to work with the original coordinates, as they eventually lead to equivalent formulas. Determining for which functions $f$ these conditions hold is a separate question independent from the coordinates chosen and is the one we primarily address in this work.
\end{newremark}

The geodesic distance between $\rho_0, \rho_1$, referred to as the Fisher-Rao distance, satisfies
\begin{subequations}
\label{eqn-fisher-rao-geodesic}
    \begin{align}
       \calD_{\rm FR}^2(\rho_0, \rho_1) &= \inf_{\rho_t,\sigma_t}\Bigl\{
          \int_0^1 \int \frac{|\sigma_t|^2}{\rho_t} \dd\theta \dd t:
          \partial_t \rho_t = \sigma_t
          \Bigr\}
          \\
          &= \inf_{\rho_t,\psi_t}\Bigl\{
          \int_0^1 \int \rho_t \bigl(\psi_t - \E_{\rho_t}[\psi_t]\bigr)^2 \dd\theta \dd t:
          \partial_t \rho_t = \rho_t (\psi_t - \E_{\rho_t}[\psi_t])
          \Bigr\}.
    \end{align}
\end{subequations}
\begin{newremark}
\label{remark:Fisher-Rao-vs-Hellinger}
    The Fisher-Rao geodesic distance attains an explicit formula, equivalent to (up to a constant scaling) the spherical Hellinger distance \cite{halder2018gradient, laschos2019geometric, lu2022birth}:
$\calD_{\rm FR}^2(\rho_0, \rho_1) = 4 \operatorname{arccos}^2\left(\int \sqrt{\rho_0}\sqrt{\rho_1}{\rm d}\theta\right)$.
In view of this relation, Fisher-Rao gradient flows are sometimes referred to as spherical Hellinger gradient flows in the literature \cite{lindsey2022ensemble,lu2022birth}.
    If we do not restrict the distributions $\rho_t$ to be on the probability space and we allow them to have any positive mass, then by using the relation $\frac{|\sigma_t|^2}{\rho_t} = \frac{|\partial_t{\rho}_t|^2}{\rho_t} = 4\Bigl|\frac{\mathrm{d}}{\mathrm{d}t}\sqrt{\rho_t}\Bigr|^2$
and the Cauchy-Schwarz inequality, we will get that the optimal objective value in \eqref{eqn-fisher-rao-geodesic} is $4\int |\sqrt{\rho_0} - \sqrt{\rho_1}|^2 \mathrm{d}\theta.$
This is (up to a constant scaling) the Hellinger distance \cite{gibbs2002choosing}. For a more detailed discussion of the origin and historical development of the terms Fisher–Rao and Hellinger, we refer to \cite[Remark 2.2]{mielke2025hellinger}.
\end{newremark}

Using a formal Lagrangian duality calculation, we can derive the optimality condition of the variational problem \eqref{eqn-fisher-rao-geodesic} as a coupled system of differential equations (see also \cite{wang2020information}):
\begin{equation}
\label{eq:FR-geodesics-PDE}
     \partial_t \rho_t = \rho_t \bigl(\psi_t - \E_{\rho_t}[\psi_t]\bigr),  \qquad
\partial_t \psi_t = -\frac{1}{2}(\psi_t - \E_{\rho_t}[\psi_t])^2,
\end{equation}
along with boundary condition $\rho_{t=0} = \rho_0, \rho_{t=1}=\rho_1$. This can be utilized to calculate the gradient and Hessian operators.

Consider any constant-speed geodesics starting at $\rho$ with initial velocity field $\sigma = \rho (\psi - \E_{\rho}[\psi])$. Differentiating along this geodesics leads to the formula of the gradient and Hessian operator in the Fisher-Rao geometry:
\begin{subequations}
\label{eq:grad-Hessian-EE}
    \begin{align}
        g_{\rho}^{\rm FR}(\textrm{grad}_{\rm FR} \EE(\rho), \sigma) =& \frac{\dd }{\dd t}\Big |_{\rm geod} \EE(\rho)
        = \int \frac{\delta \EE}{\delta \rho} \rho \bigl(\psi - \E_{\rho}[\psi]\bigr)\dd\theta,
        \\
         g_{\rho}^{\rm FR}(\textrm{Hess}_{\rm FR} \EE(\rho) \sigma, \sigma) =& \frac{\dd^2 }{\dd t^2}\Big |_{\rm geod} \EE(\rho) =
          \frac{\dd }{\dd t}\Big |_{\rm geod}  \int \frac{\delta \EE}{\delta \rho} \rho \bigl(\psi - \E_{\rho}[\psi]\bigr)\dd\theta \nonumber\\
            =& \int\frac{\delta^2\EE}{\delta \rho^2} \rho^2 \bigl(\psi - \E_{\rho}[\psi]\bigr)^2
            +
         \frac{1}{2}
         \rho \bigl(\psi - \E_{\rho}[\psi]\bigr)^2 \Bigl(\frac{\delta\EE}{\delta \rho} - \E_{\rho}[\frac{\delta\EE}{\delta \rho}]
         \Bigr)\dd\theta.
    \end{align}
\end{subequations}
Here, we repeatedly used the derivatives of $\rho$ and $\psi$ along constant-speed geodesics in \cref{eq:FR-geodesics-PDE}.

\subsection{Geodesic convexity in the Fisher-Rao geometry}
For the $f$-divergen\-ces~\eqref{eqn-f-divergences}, the corresponding gradient and Hessian operators are
\begin{subequations}
\label{eq:grad-Hessian-Df}
    \begin{align}
        g_{\rho}^{\rm FR}\bigl(\textrm{grad}_{\rm FR} D_f[\rho\Vert \rhopo], \sigma\bigr)
        =& \int f'\bigl(\frac{\rho}{\rhopo}\bigr) \rho \bigl(\psi - \E_{\rho}[\psi]\bigr)\dd\theta,  \label{eq:grad-Df}
        \\
         g_{\rho}^{\rm FR}\bigl(\textrm{Hess}_{\rm FR} D_f[\rho\Vert \rhopo] \sigma, \sigma\bigr)
          =& \int f''\bigl(\frac{\rho}{\rhopo}\bigr)\frac{\rho^2 \bigl(\psi - \E_{\rho}[\psi]\bigr)^2}{\rhopo}
          \label{eq:Hess-Df}  \\+&
         \frac{1}{2}
         \rho \bigl(\psi - \E_{\rho}[\psi]\bigr)^2 \Bigl(f'\bigl(\frac{\rho}{\rhopo}\bigr) - \E_{\rho}[f'\bigl(\frac{\rho}{\rhopo}\bigr)]
         \Bigr)\dd\theta,  \nonumber
    \end{align}
\end{subequations}
where $\sigma = \rho \bigl(\psi - \E_{\rho}[\psi]\bigr)$. 

We employ the aforementioned formula to investigate the geodesic convexity of $f$-divergences within the Fisher-Rao geometry. In what follows, we first construct counterexamples to demonstrate that the KL divergence fails to exhibit geodesic convexity (see \cref{theorem:FR-displacement-convex}). Subsequently, we provide a sufficient condition for an $f$-divergence to satisfy geodesic convexity (see \cref{theorem:postive-uniform-convex}).

\begin{theorem}
\label{theorem:FR-displacement-convex}
   For any smooth, positive target density $\rho^*$, the KL divergence $\mathrm{KL}[\rho\Vert \rho^*]$ is not geodesically convex within the Fisher-Rao geometry.

Furthermore, for any $\epsilon > 0$, the KL divergence is not geodesically convex in the neighborhood of $\rho^*$ defined by $\mathrm{KL}[\rho\Vert \rho^*] \leq \epsilon$.
\end{theorem}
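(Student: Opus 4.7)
The plan is to compute the Fisher-Rao Hessian of $\mathrm{KL}[\cdot\Vert\rho^*]$ via \eqref{eq:Hess-Df} with $f(x)=x\log x$, for which $f'(x)=1+\log x$ and $f''(x)=1/x$. This gives, for a tangent vector $\sigma=\rho(\psi-\E_\rho[\psi])$,
\begin{equation*}
H_\rho(\psi) \;=\; \int \rho\,(\psi-\E_\rho[\psi])^2\Bigl[1 + \tfrac12\bigl(\log(\rho/\rho^*)-\E_\rho[\log(\rho/\rho^*)]\bigr)\Bigr]\dd\theta.
\end{equation*}
Finding a single pair $(\rho,\psi)$ with $\psi$ non-constant smooth and $H_\rho(\psi)<0$ rules out geodesic convexity: along the constant-speed Fisher-Rao geodesic issued from $\rho$ with velocity $\sigma$, the map $t\mapsto\mathrm{KL}[\rho_t\Vert\rho^*]$ has strictly negative second derivative at $t=0$, and so is not convex on any neighborhood of $t=0$. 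The bracketed factor becomes negative exactly where $\log(\rho/\rho^*)-\E_\rho[\log(\rho/\rho^*)]<-2$, so the task reduces to engineering $\rho$ with a small localized ``hole'' in which $\log(\rho/\rho^*)$ dips well below its $\rho$-average.

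To build such a hole, fix $\theta_0\in\R^d$ and, for small $r>0$, choose a smooth cutoff $\eta_r$ with $\eta_r\equiv 1$ on $B(\theta_0,r(1-r))$, $\mathrm{supp}\,\eta_r\subset B(\theta_0,r)$, and $0\le\eta_r\le 1$, so that the transition annulus has Lebesgue measure $O(r^{d+1})=o(r^d)$. Fix any $M>2$ and set
\begin{equation*}
\rho_r(\theta)=Z_r^{-1}\rho^*(\theta)\exp(-M\eta_r(\theta)), \qquad \psi=\eta_r,
\end{equation*}
with normalizer $Z_r=1+O(r^d)$, so $\rho_r\in\PP$. Elementary $r\to 0$ expansions yield $\E_{\rho_r}[\eta_r]=O(r^d)$ and $\E_{\rho_r}[\log(\rho_r/\rho^*)]=-M\E_{\rho_r}[\eta_r]-\log Z_r=O(r^d)$, so on the inner ball $B(\theta_0,r(1-r))$ both $\log(\rho_r/\rho^*)-\E_{\rho_r}[\log(\rho_r/\rho^*)]=-M+O(r^d)$ and $(\psi-\E_{\rho_r}[\psi])^2=1+O(r^d)$ hold. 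The inner ball therefore contributes $\omega_d\rho^*(\theta_0)e^{-M}(1-M/2)r^d(1+o(1))$ to $H_{\rho_r}(\psi)$; the annular contribution is $o(r^d)$ (bounded integrand times $o(r^d)$ measure), and the exterior contribution is $O(r^{2d})$ because $\psi-\E_{\rho_r}[\psi]=O(r^d)$ there. With $M>2$ the leading term is strictly negative, so $H_{\rho_r}(\psi)<0$ for all small $r$.

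For the neighborhood statement, the same family satisfies $\mathrm{KL}[\rho_r\Vert\rho^*]=-M\E_{\rho_r}[\eta_r]-\log Z_r=O(r^d)\to 0$, so given $\epsilon>0$ one picks $r$ so small that $\rho_r$ and a short Fisher-Rao geodesic arc issued from $\rho_r$ with velocity $\sigma$ stay inside $\{\rho:\mathrm{KL}[\rho\Vert\rho^*]\le\epsilon\}$; continuity of $\mathrm{KL}$ along the geodesic follows from the explicit spherical-Hellinger formula in the preceding remark. The hard part will be the asymptotic bookkeeping at the common scale $r^d$: the transition annulus has to be kept thin enough (hence the $r^2$-width cutoff) that its contribution does not dominate the $(1-M/2)e^{-M}\omega_d\rho^*(\theta_0)r^d$ leading term, and the $O(r^d)$ corrections from $Z_r$, $\E_{\rho_r}[\eta_r]$, and $\E_{\rho_r}[\log(\rho_r/\rho^*)]$ need to be tracked consistently so that they are provably negligible against the negative leading term.
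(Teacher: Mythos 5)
Your proposal is correct and rests on the same core mechanism as the paper's proof: depress $\rho^*$ multiplicatively by $e^{-M}$ on a small set so that the factor $1+\tfrac12\bigl(\log(\rho/\rho^*)-\E_\rho[\log(\rho/\rho^*)]\bigr)$ in the Hessian integrand drops below zero there, and test the Hessian along (a scalar multiple of) the gradient direction --- your $\psi=\eta_r$ is an affine function of $\log(\rho_r/\rho^*)$ on the relevant set, so it is effectively the same test direction as the paper's choice $\psi=\log(\rho/\rho^*)$. The execution differs: the paper uses an exact two-level density $\rho=\rho^*(x_1\mathbbm{1}_{S_r^c}+x_2\mathbbm{1}_{S_r})$ with the radius tuned to enforce normalization, computes $\mathrm{KL}$ and $\calH$ in closed form, and then mollifies to recover smoothness; you instead build a smooth one-parameter family $\rho_r\propto\rho^*e^{-M\eta_r}$ from the outset and run an asymptotic analysis as $r\to0$, splitting the integral into inner ball, transition annulus, and exterior. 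Your route avoids both the exact algebra (hence you get the cleaner threshold $M>2$ rather than the paper's $M>4$) and the final mollification step, at the price of the $r^d$-scale bookkeeping, which you have set up consistently: the inner-ball term is $\omega_d\rho^*(\theta_0)e^{-M}(1-M/2)r^d(1+o(1))$, the annulus contributes $O(r^{d+1})$ thanks to the width-$r^2$ cutoff, and the exterior contributes $O(r^{2d})$ since $\psi-\E_{\rho_r}[\psi]=O(r^d)$ there. The neighborhood statement also goes through since $\mathrm{KL}[\rho_r\Vert\rho^*]=O(r^d)$; your explicit remark that a short geodesic arc issued from $\rho_r$ must remain inside $\{\mathrm{KL}\le\epsilon\}$ is, if anything, more careful than the paper, which leaves that point implicit. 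You omit the paper's Gaussian warm-up example, but that example is illustrative only and not needed for the general statement.
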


\begin{newremark}
We note that small neighborhood in \cref{theorem:FR-displacement-convex} can also be defined using the Fisher-Rao distance $\calD_{\rm FR}^2(\rho, \rho^{*})$. This is because we have the inequality 
$\calD_{\rm FR}^2(\rho, \rho^{*}) \leq 4 \mathrm{KL}[\rho\Vert \rho^*]$. Indeed, 
\begin{align*}
    \mathcal{D}^2_{\rm FR}(\rho,\rho^{*}) &= 4\arccos^2\Bigl(\int \sqrt{\rho\rho^{*}} d\theta\Bigr) \leq 
    4\Bigl[-2\log \int \sqrt{\rho\rho^{*}}d\theta \Bigr]
    \leq 4\rm{KL}[\rho \Vert \rho^{*}],
\end{align*}
here the first inequality follows from $\arccos^2(x) \leq -2\log x \,\forall \, 0 \leq x \leq 1$ with $x = \int \sqrt{\rho\rho^{*}}{\rm d}\theta$. This can be shown by defining $g(t) = -2\log \cos t -t^2$ for $t = \arccos(x) \in [0,\frac{\pi}{2}]$, and noting that $g'(t) =2\tan t - 2t \geq 0$, which implies $g(t)\geq g(0) = 0$. The second inequality follows from the monotonicity of the Rényi divergence
$$D_\alpha[\rho\Vert\rho^*] =  \frac{1}{\alpha - 1}\log \int \rho^{\alpha} {\rho^*}^{1-\alpha}{\rm d}\theta,$$ which is nondecreasing in $\alpha$ \cite{van2010renyi}. Hence $-2\log \int \sqrt{\rho\rho^{*}}{\rm d}\theta = D_{\frac{1}{2}}[\rho\Vert\rho^*] \leq D_{1}[\rho\Vert\rho^*] = \rm{KL}[\rho \Vert \rho^{*}].$

\end{newremark}

\begin{proof}
\label{proof-formal:FR-displacement-convex}
    The KL-divergence corresponds to the $f$-divergence with $f(x) = x\log x$. In such case, the Hessian formula~\eqref{eq:Hess-Df} takes the form
    \begin{equation}
    \label{eq:Hess-KL}
        \begin{split}
            g_{\rho}^{\rm FR}\bigl(\textrm{Hess}_{\rm FR} {\rm KL}[\rho\Vert \rhopo] \sigma, \sigma\bigr)
         &= \int \rho \bigl(\psi - \E_{\rho}[\psi]\bigr)^2
         \\
         &+
         \frac{1}{2}
         \rho \bigl(\psi - \E_{\rho}[\psi]\bigr)^2 \Bigl(\log\frac{\rho}{\rhopo} - \E_{\rho}[\log\frac{\rho}{\rhopo}]
         \Bigr)\dd\theta.
        \end{split}
    \end{equation}
The positive semi-definiteness of the Hessian depends on the sign of $1+\frac{1}{2}\bigl(\log\frac{\rho}{\rhopo} - \E_{\rho}[\log\frac{\rho}{\rhopo}]\bigr)$. For instance, the KL divergence is geodesically convex when this term is non-negative. A simple sufficient condition is  $1/e\leq {\rho}/{\rhopo}\leq e$. 

However, in general, this term does not have a definite sign, and the KL divergence will not be geodesically convex.
To illustrate this, 
let us consider a specific case where $\psi = \log\frac{\rho}{\rhopo}$. In this scenario, \eqref{eq:Hess-KL} becomes
\begin{equation}
\label{eq:Hess-KL-GF}
     \calH(\rho, \rhopo)
         = \int \rho \Bigl(\log\frac{\rho}{\rhopo} - \E_{\rho}[\log\frac{\rho}{\rhopo}]
         \Bigr)^2
         +
         \frac{1}{2}
         \rho \Bigl(\log\frac{\rho}{\rhopo} - \E_{\rho}[\log\frac{\rho}{\rhopo}]
         \Bigr)^3\dd\theta.
\end{equation}
We will now present two examples to demonstrate why the above term is not generally non-negative.

We begin by considering a simple pedagogical counterexample where 
$\rhopo(\theta) = \N(\theta; 0, 1)$ represents a 1D Gaussian density. We then construct $\rho(\theta) = \N(\theta; \mu, \sigma^2)$ as another Gaussian distribution. The following calculations can be made:
\begin{equation}
\label{eq:Gaussian-KL}
  \begin{split}
 &\E_{\rho}[\log\frac{\rho}{\rhopo}]= \frac{\sigma^2}{2} + \frac{\mu^2}{2}-\log \sigma - \frac{1}{2},
 \\
    &\int\rho\Bigl(\log\frac{\rho}{\rhopo} -  \E_{\rho}[\log\frac{\rho}{\rhopo}]\Bigr)^2 \dd\theta  = \frac{1}{2}(\sigma^2 - 1)^2 + \sigma^2\mu^2,
    \\
    &\int\rho\Bigl(\log\frac{\rho}{\rhopo} -  \E_{\rho}[\log\frac{\rho}{\rhopo}]\Bigr)^3 \dd\theta  = (\sigma^2 - 1)^3 + 3(\sigma^2-1)\sigma^2\mu^2.
\end{split}
\end{equation}
Plugging \eqref{eq:Gaussian-KL} into \eqref{eq:Hess-KL-GF}, we get
\begin{equation*}
     \calH(\rho, \rhopo)
         = \frac{1}{2}\sigma^2(\sigma^2 - 1)^2
         + \frac{3}{2}\sigma^4\mu^2
         - \frac{1}{2}\sigma^2\mu^2 =
         \frac{\sigma^2}{2}\bigl((\sigma^2 - 1)^2 + 3\sigma^2\mu^2 - \mu^2\bigr).
\end{equation*}
We can choose $\sigma^2 < \frac{1}{3}$ and $\mu^2 > \frac{(\sigma^2 - 1)^2}{1 - 3\sigma^2}$, which leads to $\calH(\rho, \rhopo) < 0$.

Next, we present a general example beyond the Gaussian case to demonstrate the theorem. We will first construct a counterexample where the density is not continuous. We then mollify this example to obtain a smooth density.

Given any $\rhopo$ and radius $r>0$, we denote
\begin{align}\label{e:defrhor}
\rho_r := \int_{B_r} \rhopo(\theta) \dd\theta,
\end{align}
and construct the following density
\begin{equation}
\label{eq:rho-discontinuous}
    \rho(\theta) = \rhopo(\theta)(x_1 \mathbbm{1}_{B^c_r}(\theta) + x_2 \mathbbm{1}_{B_r}(\theta))
\end{equation}
where $\mathbbm{1}_A$ is the indicator function of a set $A$ and $B_r$ denotes the ball with radius $r$. For $\rho(\theta)$ to be a probability density, it is necessary that
\begin{align}\label{e:x1x2condition}
    x_1 \Bigl(1 - \int_{B_r} \rhopo(\theta) \dd\theta\Bigr) + x_2 \int_{B_r} \rhopo(\theta) \dd\theta = x_1(1-\rho_r)+x_2\rho_r=1.
\end{align}
In fact, for any choice of $x_1 > 1 > x_2 > 0$, we can take some $r\in (0,+\infty)$ such that $\rho_r  = \frac{x_1 - 1}{x_1 - x_2} \in (0,1)$
which leads to \eqref{e:x1x2condition}.

We can calculate KL$[\rho\Vert \rhopo]$ and $\calH(\rho, \rhopo)$ denoted in \eqref{eq:Hess-KL-GF}, as follows:
\begin{align*}
       {\rm KL}[\rho\Vert \rhopo] = & \frac{x_1 - x_1x_2}{x_1 - x_2}\log x_1 + \frac{x_1x_2 - x_2}{x_1 - x_2}\log x_2, \\
       \calH(\rho, \rhopo) = & \rho_r(1-\rho_r)x_1x_2\Bigl(\log\frac{x_1}{x_2}\Bigr)^2 \!\!+ \frac{1}{2}\rho_r(1-\rho_r)x_1x_2\bigl(x_2\rho_r - x_1(1-\rho_r)\bigr)\Bigl(\log\frac{x_1}{x_2}\Bigr)^3  \nonumber\\
       = & \rho_r(1-\rho_r)x_1x_2\Bigl(\log\frac{x_1}{x_2}\Bigr)^2\Bigl(1 + \frac{x_2\rho_r - x_1(1-\rho_r)}{2}\log\frac{x_1}{x_2}\Bigr) \nonumber\\
       = & \rho_r(1-\rho_r)x_1x_2\Bigl(\log\frac{x_1}{x_2}\Bigr)^2\Bigl(1 - \frac{x_1 + x_2 - 2x_1x_2}{2(x_1 - x_2)}\log\frac{x_1}{x_2}\Bigr).
\end{align*}
Let us choose $x_1 = e^{\epsilon}$ with $\epsilon>0$ and $x_2 = e^{-M}$ with $M > 4$. Then we have
\begin{equation}
 \label{eq:H-discontinuous}
    \begin{aligned}
   {\rm KL}[\rho\Vert \rhopo] \leq & \frac{x_1 - x_1x_2}{x_1 - x_2}\log x_1 \leq \log x_1 = \epsilon, \\
       \calH(\rho, \rhopo)
       = & \rho_r(1-\rho_r)x_1x_2\Bigl(\log\frac{x_1}{x_2}\Bigr)^2\Bigl(1 - \frac{x_1 + x_2 - 2x_1x_2}{2(x_1 - x_2)}(M + \epsilon)\Bigr)
       \\
       = & \rho_r(1-\rho_r)x_1x_2\Bigl(\log\frac{x_1}{x_2}\Bigr)^2\Bigl(1 - \frac{e^{M} + e^{-\epsilon} - 2}{2(e^{M} - e^{-\epsilon})}(M + \epsilon)\Bigr) 
       \\
       \leq &\rho_r(1-\rho_r)x_1x_2\Bigl(\log\frac{x_1}{x_2}\Bigr)^2\Bigl(1 - \frac{e^M - 2}{2e^M}M \Bigr)<0. 
    \end{aligned}
\end{equation}
Therefore, the Hessian is not positive definite.
Finally, we can smooth $\rho$ in \eqref{eq:rho-discontinuous} by introducing the mollifier:
\begin{equation*}
    \phi_\delta(\theta) = \begin{cases}
       \frac{1}{Z_\delta}e^{-\frac{1}{1 - |\theta/\delta|^2}}  & |\theta|< \delta \\
       0  & |\theta| \geq \delta,
   \end{cases}
\end{equation*}
which has a compact support within $B_\delta$ and satisfies $\int \phi_{\delta}(\theta) \dd\theta = 1$.
The smoothed density is constructed using convolution, through
\begin{equation*}
    \rho_\delta(\theta) = \rhopo(\theta)  \bigl(\phi_\delta  * (x_1 \mathbbm{1}_{B^c_{r+\delta}} + x_2 \mathbbm{1}_{B_{r-\delta}}  + x_3 \mathbbm{1}_{B_{r+\delta} \backslash B_{r-\delta}} )\bigr)(\theta).
\end{equation*}
Here, $x_3$ is a scalar determined by $\int \rho_\delta(\theta)\dd\theta = 1$. Note that $x_1 > 1 > x_2 > 0$. It holds that $x_2 \leq  x_3 \leq x_1$
and $x_2 \leq \frac{\rho_\delta}{\rhopo} \leq  x_1$.
Since $\rho_\delta(\theta)$ only deviates from  $\rho$ in \eqref{eq:rho-discontinuous} within the compact band $B_{r+2\delta} \backslash B_{r-2\delta}$, the last inequality in~\cref{eq:H-discontinuous} remains valid when $\delta$ is sufficiently small.
\end{proof}

\cref{theorem:FR-displacement-convex} demonstrates that the KL divergence, a specific instance of an $f$-divergence, is not geodesically convex. This finding implies that $f$-divergences generally do not possess this property. Consequently, we cannot rely on convexity to establish exponential convergence for Fisher-Rao gradient flows of the KL divergence. In \cref{sec-Functional Inequality: Dual Gradient Dominance Condition}, we will introduce alternative conditions that enable us to prove convergence results.

Conversely, we will now establish a sufficient condition on $f$ that ensures the geodesic convexity of its corresponding $f$-divergence.
\begin{theorem}
\label{theorem:postive-uniform-convex}
    Assume  $f\in C^2(0,+\infty)$ is convex, satisfies $f(1)=0$  and $xf'(x)$ is concave.
    Then, for any smooth, positive target density $\rho^*$, the $f$-divergence is geodesically convex within the Fisher-Rao geometry. Furthermore, when $f''(1) > 0$, such $f$-divergence is $\alpha_f$-strongly geodesically convex for any $\rhopo$, where $\alpha_f > 0$ is a constant that depends solely on $f$. These assumptions are satisfied for $f''(x) = x^p$ with $p\leq -2$.
\end{theorem}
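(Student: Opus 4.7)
My plan is to reduce geodesic convexity to a \emph{pointwise} inequality on the integrand of the Hessian formula \eqref{eq:Hess-Df} and then verify this pointwise inequality via the concavity of $\phi(y) := y f'(y)$ combined with Jensen's inequality. Writing $x = \rho/\rhopo$ and $\bar\psi = \psi - \E_\rho[\psi]$, formula \eqref{eq:Hess-Df} becomes
\begin{equation*}
g_{\rho}^{\rm FR}(\textrm{Hess}_{\rm FR} D_f[\rho\Vert \rhopo]\, \sigma, \sigma) = \int \rho \bar\psi^2 \Bigl[x f''(x) + \tfrac{1}{2}\bigl(f'(x) - \E_\rho[f'(x)]\bigr)\Bigr]\, d\theta,
\end{equation*}
and since $\rho\bar\psi^2 \geq 0$ it suffices to show the bracket is non-negative at each $\theta$, i.e.\ $2xf''(x) + f'(x) \geq \E_\rho[f'(x)]$ pointwise.

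To bound the right-hand side, I would rewrite $\E_\rho[f'(x)] = \int \rho\, f'(\rho/\rhopo)\, d\theta = \int \rhopo\, xf'(x)\, d\theta = \E_{\rhopo}[\phi(x)]$ and apply Jensen's inequality to the concave $\phi$ with $\E_{\rhopo}[x] = \int \rho\, d\theta = 1$, giving $\E_\rho[f'(x)] \leq \phi(1) = f'(1)$. For the left-hand side, the first-order concavity inequality $\phi(1) \leq \phi(x) + \phi'(x)(1-x)$, after expanding $\phi'(y) = f'(y) + yf''(y)$, simplifies to $f'(1) - f'(x) \leq xf''(x)(1-x)$; since $xf''(x)(1+x) \geq 0$ for $x > 0$, the right-hand side is at most $2xf''(x)$, so $2xf''(x) + f'(x) \geq f'(1)$. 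Chaining yields $2xf''(x) + f'(x) \geq f'(1) \geq \E_\rho[f'(x)]$ pointwise and hence geodesic convexity.

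For $\alpha_f$-strong convexity under $f''(1) > 0$, I would strengthen this to $2xf''(x) + f'(x) - \E_\rho[f'(x)] \geq 2\alpha_f$ uniformly. Since the Jensen bound already gives that this expression is at least $g(x) := 2xf''(x) + f'(x) - f'(1)$, it suffices to prove $\inf_{x > 0} g(x) > 0$. A direct computation gives $g'(x) = 3f''(x) + 2xf'''(x)$; using $(xf')'' = 2f'' + xf''' \leq 0$ (concavity), this is $\leq -f''(x) \leq 0$, so $g$ is non-increasing. Combined with $g(1) = 2f''(1) > 0$, this yields $g(x) \geq 2f''(1)$ for $x \leq 1$ and reduces the task to showing $L := \lim_{x \to \infty} g(x) > 0$. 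I would establish $L > 0$ by contradiction: $L = 0$ would force both non-negative summands $2xf''(x)$ and $f'(x) - f'(1)$ to vanish as $x \to \infty$, so $f'(\infty) = f'(1)$, which by monotonicity of $f'$ (from $f$ convex) forces $f' \equiv f'(1)$ on $[1,\infty)$, hence $f'' \equiv 0$ there, contradicting $f''(1) > 0$ by $C^2$-continuity. Setting $\alpha_f = L/2$ then completes the argument.

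The main obstacle is the strong-convexity step, specifically the asymptotic non-degeneration $L > 0$. The pointwise geodesic-convexity argument is transparent once the concavity inequality for $\phi$ is rearranged correctly, but obtaining a \emph{uniform} positive lower bound on the Hessian requires carefully combining $f$ convex, $\phi = xf'$ concave, and $f''(1) > 0$ (propagated to large $x$ via $C^2$-continuity of $f''$) to rule out the degenerate scenario $g(\infty) = 0$.
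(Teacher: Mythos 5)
Your overall strategy coincides with the paper's: both proofs reduce to the pointwise inequality $h(x):=2xf''(x)+f'(x)-f'(1)\geq 0$ (resp.\ $\geq$ a positive constant) after using Jensen's inequality on the concave map $x\mapsto xf'(x)$ to bound $\E_\rho[f'(\rho/\rhopo)]\leq f'(1)$. Your derivation of $h\geq 0$ via the tangent-line inequality $\phi(1)\leq\phi(x)+\phi'(x)(1-x)$ for $\phi(x)=xf'(x)$ is a clean one-line alternative to the paper's case split at $x=1$, and it is correct.

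There is, however, one genuine gap in the strong-convexity step: you compute $g'(x)=3f''(x)+2xf'''(x)$ and invoke $2f''+xf'''\leq 0$, but the hypothesis is only $f\in C^2(0,+\infty)$, so $f'''$ need not exist and the concavity of $xf'(x)$ cannot be differentiated twice pointwise. The monotonicity of $g=h$ must instead be obtained as the paper does: write $h(x)=2\bigl(xf'(x)\bigr)'-f'(x)-f'(1)$ and observe that $\bigl(xf'(x)\bigr)'$ is non-increasing (derivative of a concave $C^1$ function) and $-f'$ is non-increasing (convexity of $f$), so $h$ is non-increasing without any third derivative. Once that is repaired, your contradiction argument for $L:=\lim_{x\to\infty}g(x)>0$ is valid: $f'(x)-f'(1)$ is non-negative and non-decreasing on $[1,\infty)$, so a zero limit forces $f''\equiv 0$ on $[1,\infty)$, contradicting $f''(1)>0$. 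Note that this yields a non-quantitative constant $\alpha_f=L/2$, whereas the paper extracts an explicit bound $\min\{f''(1),\delta_f f''(1)/2\}$ by splitting $[1,\infty)$ at $1+\delta_f$ with $\delta_f$ chosen so that $f''\geq f''(1)/2$ on $[1,1+\delta_f]$; both depend only on $f$, but the explicit version is preferable if one wants usable rates. You also omit the verification that $f''(x)=x^p$, $p\leq -2$, satisfies the hypotheses, which is a stated (if minor) part of the theorem.
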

\begin{proof}

When $xf'(x)$ is concave, using Jensen's inequality we get
\begin{align}
\E_{\rho}[f'\bigl(\frac{\rho}{\rhopo}\bigr)] = \E_{\rhopo}\Bigl[\frac{\rho}{\rhopo}f'\bigl(\frac{\rho}{\rhopo}\bigr)\Bigr]
         \leq \E_{\rhopo}\bigl[\frac{\rho}{\rhopo}\bigr]f'\Bigl(\E_{\rhopo}\bigl[\frac{\rho}{\rhopo}\bigr]\Bigr)= f'(1). \label{eq:xf'(x)}
\end{align}
We recall the expression of Hessian operator from \eqref{eq:Hess-Df}
\begin{align}\begin{split}\label{e:gHess}
&\phantom{{}={}}g_{\rho}^{\rm FR}\bigl(\textrm{Hess}_{\rm FR} D_f[\rho\Vert \rhopo] \sigma, \sigma\bigr) \\
          &= \frac{1}{2}\int
         \rho \bigl(\psi - \E_{\rho}[\psi]\bigr)^2 \Bigl(2\frac{\rho}{\rhopo}f''\bigl(\frac{\rho}{\rhopo}\bigr)
         +f'\bigl(\frac{\rho}{\rhopo}\bigr) - \E_{\rho}f'\bigl(\frac{\rho}{\rhopo}\bigr)
         \Bigr)\dd\theta
         \\
         &\geq \frac{1}{2}\int
         \rho \bigl(\psi - \E_{\rho}[\psi]\bigr)^2 \Bigl(2\frac{\rho}{\rhopo}f''\bigl(\frac{\rho}{\rhopo}\bigr)
         +f'\bigl(\frac{\rho}{\rhopo}\bigr) - f'(1)
         \Bigr)\dd\theta,
\end{split}\end{align}
where in the last line we used \eqref{eq:xf'(x)}. We introduce the function
\begin{align}\label{e:defh}
    h(x) = 2xf''(x) + f'(x) - f'(1)=2\bigl(xf'(x)\bigr)'-f'(x)-f'(1).
\end{align}
With the function $h(x)$, we can rewrite \eqref{e:gHess} as
\begin{align}\label{eq:h-hess-Df}
    g_{\rho}^{\rm FR}\bigl(\textrm{Hess}_{\rm FR} D_f[\rho\Vert \rhopo] \sigma, \sigma\bigr) \geq \frac{1}{2}\int
         \rho (\psi - \E_{\rho}[\psi])^2 h\bigl(\frac{\rho}{\rhopo}\bigr) \dd\theta
\end{align}
Due to our assumptions, $xf'(x)$ and $-f(x)$ are both concave, so $(xf'(x))'$ and $-f'(x)$ are non-increasing functions of $x$. It follows that $h(x)$ in \eqref{e:defh} is non-increasing.
Therefore, when $x \leq 1$, we have
$h(x) \geq h(1) = 2f''(1)\geq 0,$
and when $x \geq 1$, we have
$h(x) \geq  f'(x) - f'(1) \geq 0.$
Thus, $h(x) \geq 0$. This together with \eqref{eq:h-hess-Df} implies that
$$g_{\rho}^{\rm FR}\bigl(\textrm{Hess}_{\rm FR} D_f[\rho\Vert \rhopo] \sigma, \sigma\bigr)\geq 0,$$
for any tangent vector $\sigma$. This shows that the $f$-divergence is geodesically convex.

Next we prove that when $f''(1) > 0$, the $f$-divergence is $\alpha_f$-strongly geodesically convex. To do so, we need a stronger estimate of the function $h(x)$.
When $f''(1) > 0$, there exists
$\delta_f > 0$, such that for $x \in [1, 1+\delta_f]$,
    $\frac{f''(1)}{2} \leq f''(x)$.
This inequality, combined with the preceding discussion, implies that
\begin{equation}
\label{eq:h}
    h(x) \geq
    \begin{cases}
    2f''(1) & 0<x\leq 1\\
    2xf''(x)  \geq  f''(1)&  1 < x \leq 1 + \delta_f \\
    f'(x) - f'(1) = \int_1^x f''(\xi){\rm d}\xi \geq \delta_f \frac{f''(1)}{2} & x\geq 1 + \delta_f
    \end{cases}.
\end{equation}
Consequently, $h(x) \geq \alpha_f > 0$, where $\alpha_f = \min\{f''(1), \delta_f\frac{f''(1)}{2}\}$ is a constant that depends only on $f$.
Thus, when $f''(1) > 0$, combining \eqref{eq:h} and \eqref{eq:h-hess-Df}, we conclude that the $f$-divergence is $\alpha_f$-strongly geodesically convex, i.e.,
\begin{equation*}
   g_{\rho}^{\rm FR}(\textrm{Hess}_{\rm FR} D_f[\rho\Vert \rhopo] \sigma, \sigma) \geq \frac{\alpha_f}{2}g_{\rho}^{\rm FR}(\sigma, \sigma).
\end{equation*}
Finally, for $f''(x) = x^p$ with $p \leq -2$, $f$ is convex on $(0,+\infty)$. Moreover, $xf'(x)$ is concave, since $(xf'(x))'' = 2f''(x) + xf'''(x) = (2+p)x^p \leq 0$ for $x \in (0,+\infty)$. Thus, all assumptions are satisfied for $f$ with $f''(x) = x^p$ where $p \leq -2$.
\end{proof}

\section{Functional Inequality: Gradient Dominance Condition}
\label{sec-Functional Inequality: Gradient Dominance Condition}
In this section, we study the gradient dominance condition \eqref{eq:intro-GDC-functional} of the $f$-divergences in the Fisher-Rao geometry. Counterexamples will be presented to illustrate that the KL-divergence does not satisfy the gradient dominance condition. Subsequently, we provide necessary and sufficient conditions to identify the functions $f$ that ensure the gradient dominance condition is met.

First, we calculate the norm of the gradient in the Fisher-Rao geometry
\begin{align*}
&g_{\rho}^{\mathrm{FR}}(\textrm{grad}_{\rm FR}D_f[\rho\Vert \rhopo], \textrm{grad}_{\rm FR}D_f[\rho\Vert \rhopo])\\
    =&g_{\rho}^{\mathrm{FR}}\left( -\rho f'\bigl(\frac{\rho}{\rhopo}\bigr) + \rho \E_{\rho}\Bigl[ f'\bigl(\frac{\rho}{\rhopo}\bigr)\Bigr] , -\rho f'\bigl(\frac{\rho}{\rhopo}\bigr) + \rho \E_{\rho}\Bigl[ f'\bigl(\frac{\rho}{\rhopo}\bigr)\Bigr]\right)\\
    =&\int \rho\left(  f'\bigl(\frac{\rho}{\rhopo}\bigr) -  \E_{\rho}\Bigl[ f'\bigl(\frac{\rho}{\rhopo}\bigr)\Bigr]\right)^2 \dd \theta.
\end{align*}
The gradient dominance condition can be formulated as follows:
\begin{condition}[Gradient dominance condition]\label{cond:gradient-dominant-condition-f}
The function $f$ is said to satisfy the gradient dominance condition if there exists a constant $\alpha_f > 0 $, depending only on $f$, such that
    \begin{equation}
\label{eq:gradient-dominant-condition-f}
\begin{aligned}
\int \rho\left(  f'\bigl(\frac{\rho}{\rhopo}\bigr) -  \E_{\rho}\Bigl[ f'\bigl(\frac{\rho}{\rhopo}\bigr)\Bigr]\right)^2 \dd \theta
 \geq  \alpha_f D_f[\rho \Vert \rhopo],
\end{aligned}
\end{equation}
for any smooth positive densities $\rho$ and $\rhopo$, provided that $f'(\frac{\rho}{\rhopo})$ is absolutely integrable with respect to $\rho$.
\end{condition}

\subsection{KL divergence: no gradient dominance}
We first present the following negative result concerning the gradient dominance condition for the KL divergence. In this case, $f(x) = x\log x$ and $f'(x) = \log x + 1$.
\begin{theorem}
\label{theorem:FR-gradient-dominant-condition}
    Given any smooth, positive target distribution $\rhopo$ and any $\epsilon > 0$, we can construct $\rho$, such that $ \rho\neq \rhopo$ and
\begin{align}
\label{eq:log-Sol-KL}
    \int\rho\Bigl(\log\frac{\rho}{\rhopo} -  {\rm KL}[\rho\Vert\rhopo]\Bigr)^2 \dd\theta \leq \epsilon \rm{KL}[\rho \Vert \rhopo].
\end{align}
Moreover,  given any $\epsilon' > 0$, we can also construct $\rho$ in the neighborhood of $\rhopo$ defined as {\rm KL}$[\rho \Vert \rhopo] < \epsilon'$, such that \eqref{eq:log-Sol-KL} holds.
\end{theorem}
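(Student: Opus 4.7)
The plan is to mimic the two-level construction from the proof of Theorem \ref{theorem:FR-displacement-convex}, namely
$\rho(\theta) = \rhopo(\theta)\bigl(x_1 \mathbbm{1}_{S_r^c}(\theta) + x_2 \mathbbm{1}_{S_r}(\theta)\bigr)$
with $0 < x_2 < 1 < x_1$, choosing $r$ so that $\rho_r := \int_{S_r}\rhopo\,\dd\theta = (x_1-1)/(x_1-x_2)$, which ensures the normalization $x_1(1-\rho_r) + x_2\rho_r = 1$. The key observation is that under $\rho$ the quantity $\log(\rho/\rhopo)$ is a two-point random variable: it equals $\log x_1$ with $\rho$-probability $a := x_1(1-\rho_r)$ and $\log x_2$ with $\rho$-probability $b := x_2\rho_r = 1-a$. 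Hence elementary two-point identities give
$\mathrm{KL}[\rho\Vert\rhopo] = a\log x_1 + b\log x_2$ and $\int\rho\bigl(\log(\rho/\rhopo) - \mathrm{KL}[\rho\Vert\rhopo]\bigr)^2\,\dd\theta = ab\bigl(\log(x_1/x_2)\bigr)^2$.

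The strategy is then to drive the variance to zero faster than the KL by letting $x_2 = \delta \to 0^+$. Since $b \sim \delta$ while $\log(x_1/x_2) \sim \lvert\log\delta\rvert$, the variance scales as $\delta(\log\delta)^2$, whereas KL remains of order $\log x_1$ (the negative contribution $b\log x_2 \sim \delta\log\delta$ is negligible). Thus the ratio $\mathrm{Var}/\mathrm{KL}$ tends to zero, violating \eqref{eq:gradient-dominant-condition-f}. For the first claim, we fix any $x_1 > 1$ and take $\delta$ small enough that the ratio falls below the prescribed $\epsilon$. For the ``moreover'' part, we first pick $x_1 = 1+\eta$ with $\eta$ small, so that $\mathrm{KL}[\rho\Vert\rhopo] \le \log x_1 < \epsilon'$ uniformly in $\delta$, and then shrink $\delta$ further until $\mathrm{Var}/\mathrm{KL} < \epsilon$.

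Finally, to obtain a smooth density in $\mathcal{P}$, we apply the same mollification procedure used at the end of the proof of Theorem \ref{theorem:FR-displacement-convex}: convolve the piecewise-constant weight with a bump function $\phi_{\delta'}$ of small width, producing a thin transition layer in which a third level $x_3\in[x_2,x_1]$ is chosen to preserve unit mass. Since both $\mathrm{KL}$ and the variance are continuous functionals of the weight and the modified annulus has measure vanishing with $\delta'$, the estimates established for the discontinuous density persist.

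The main (minor) obstacle is just a scale-separation check: we need $\delta\lvert\log\delta\rvert \ll \log x_1$ so that KL does not itself collapse as $\delta\to 0$, keeping the denominator of $\mathrm{Var}/\mathrm{KL}$ under control; with $x_1$ (equivalently $\eta$) fixed before sending $\delta\to 0$, this is automatic. Conceptually, the mechanism exploited here is the unboundedness of $\log$ near zero: a vanishingly small $\rho$-mass region on which $\rho/\rhopo$ is extremely small contributes nontrivially to $\log$-values but only marginally to $\rho$-integrals, which is precisely what breaks the Polyak–{\L}ojasiewicz ratio for the Fisher--Rao gradient flow of the KL divergence.
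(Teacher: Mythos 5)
Your proposal is correct and follows essentially the same route as the paper's general-case argument: the same two-level density $\rho = \rhopo(x_1\mathbbm{1}_{S_r^c}+x_2\mathbbm{1}_{S_r})$ with $x_1$ fixed (and close to $1$ for the neighborhood claim) and $x_2\to 0^+$, which is exactly the paper's choice $x_1=e^{\epsilon'}$, $x_2=e^{-M}$ with $M\to\infty$, followed by the same mollification. The only differences are cosmetic: you organize the computation via two-point variance identities and asymptotics instead of the paper's explicit inequalities, and you omit the paper's non-essential Gaussian warm-up example.
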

\begin{proof}
We denote
    $\G(\rho, \rhopo) = \int\rho\Bigl(\log\frac{\rho}{\rhopo} -  {\rm KL}[\rho\Vert\rhopo]\Bigr)^2 \dd\theta.$
We first consider simple 1D Gaussian densities as a pedagogical example:  $\rho(\theta) = \N(\theta; \mu, \sigma)$ and $\rhopo(\theta) = \N(\theta; 0, 1)$. We can calculate both sides of \eqref{eq:log-Sol-KL} similar to \cref{eq:Gaussian-KL}:
\begin{align*}
    \G(\rho, \rhopo) = \frac{1}{2}\Bigl(\sigma^2 - 1\Bigr)^2 + \sigma^2\mu^2, \qquad {\rm KL}[\rho\Vert\rhopo] = \frac{\sigma^2}{2} + \frac{\mu^2}{2}-\log \sigma - \frac{1}{2}.
\end{align*}
We choose $M > \max\{1, \sqrt{\frac{3}{\epsilon}}\}$ and set $\sigma = \frac{1}{M}$ and $ \mu = M$, then
\begin{align*}
    \frac{\G(\rho, \rhopo)}{{\rm KL}[\rho\Vert\rhopo]}  = \frac{\frac{1}{2}\Bigl(\frac{1}{M^2} - 1\Bigr)^2 + 1}{\frac{1}{2M^2} + \log M - \frac{1}{2} + \frac{M^2}{2}} \leq \frac{3}{M^2} \leq \epsilon,
\end{align*}
and thus, the gradient dominance condition is violated. Here we used the fact that $\frac{1}{2M^2} + \log M \geq \frac{1}{2}$ when $M \geq 1$.

Then, we consider the general case. Given any $\rhopo$, we construct
\begin{equation}
\label{eq:rho-discontinuous2}
    \rho(\theta) = \rhopo(\theta)\bigl(x_1 \mathbbm{1}_{B^c_r}(\theta) + x_2 \mathbbm{1}_{B_r}(\theta)\bigr).
\end{equation}
Then, similar to the proof of \Cref{theorem:FR-displacement-convex}, we have the flexibility to select any $x_1 > 1 > x_2 > 0$; in particular, we can adjust $r$ to ensure $\rho$ is a density function.

We calculate both sides of \cref{eq:log-Sol-KL} as follows:
    \begin{align*}
      &\G(\rho, \rhopo)  =  x_1x_2\frac{(x_1-1)(1-x_2)}{(x_1 - x_2)^2}\Bigl(\log\frac{x_1}{x_2}\Bigr)^2  \\
      &{\rm KL}[\rho\Vert \rhopo] =  \frac{x_1 - x_1x_2}{x_1 - x_2}\log x_1 + \frac{x_1x_2 - x_2}{x_1 - x_2}\log x_2.
    \end{align*}
If we choose $x_1 = e^{\epsilon'}$ with $\epsilon'>0$ and $x_2 = e^{-M}$ with $M > \max\{\epsilon,\,\epsilon',\,2\log\frac{1}{\epsilon\epsilon'},\,20\}$, we have
\begin{subequations}
\label{eq:G/KL-discontinuous}
    \begin{align}
   {\rm KL}[\rho\Vert \rhopo] \leq & \frac{x_1 - x_1x_2}{x_1 - x_2}\log x_1 \leq \log x_1 = \epsilon'\\
       \frac{\G(\rho, \rhopo)}{{\rm KL}[\rho\Vert \rhopo]}
       = & \frac{(1-\frac{1}{x_1})(\frac{1}{x_2}-1)\bigl(\log \frac{x_1}{x_2}\bigr)^2}{(\frac{1}{x_2}-\frac{1}{x_1})\bigl((\frac{1}{x_2}-1)\log x_1 + (1-\frac{1}{x_1})\log x_2\bigr)}
       \\
       = & \frac{(1-e^{-\epsilon'})(e^M-1)(M+\epsilon')^2}{(e^M-e^{-\epsilon'})\bigl((e^M-1)\epsilon' - (1-e^{-\epsilon'})M\bigr)}
       \nonumber
       \\
       < &
       \frac{(M+\epsilon')^2}{(e^M-1)\epsilon' - (1-e^{-\epsilon'})M} <
       \frac{4M^2}{\epsilon' e^M- 2M} \epsilon.
       \nonumber
\end{align}
\end{subequations}
In the last inequality, we used that when $M \geq 20$, $e^{M/2} \geq 6M^2$, and 
$4M^2 \leq \epsilon\epsilon'e^{M/2} 6M^2 - 2M^2 \leq (\epsilon e^M - 2M)\epsilon'$.

Lastly, by applying the same mollifier approach used in the proof of \Cref{theorem:FR-displacement-convex}, we can smooth $\rho$ in \eqref{eq:rho-discontinuous2} to obtain an analogous result. This completes the proof.
\end{proof}

\begin{newremark}
We also note that \cite[Theorem 3.3]{lu2019accelerating}\cite[Theorem 2.4]{lu2022birth} provide a sufficient condition for a density $\rho$ to satisfy the gradient dominance condition, namely that $\inf_\theta \frac{\rho(\theta)}{\rhopo(\theta)} > 0$, i.e., that the ratio has a positive lower bound. 
Our counterexamples here do not satisfy this condition. Specifically, for the sequence of densities $\rho$ parameterized by $M$, the constant for the gradient dominance condition worsens and tends to infinity as $M \rightarrow \infty$. This blow-up can still occur even when $\rho$
remains within an $\epsilon'$-neighborhood of $\rho^{*}$.

\end{newremark}

\subsection{Sufficient and necessary conditions for gradient dominance}
In this section, we present a sufficient and necessary condition on $f$ so that the gradient dominance is satisfied.

Since both sides of the gradient dominance condition \eqref{eq:gradient-dominant-condition-f} remain the same under the transformation $f(x) \rightarrow f(x) + c(x - 1)$ for any $c$, we can further assume $f'(1) = 0$.

\begin{condition}
    \label{cond:n=2-GDC}
    There exists an $\alpha > 0$, such that
       \begin{equation}
\label{eq:n=2-GDC}
\begin{split}
 \Bigl(f'(y) - f'(x)\Bigr)^2  \geq  \alpha(\frac{1}{x} - \frac{1}{y})\Bigl( \frac{f(x)}{1 - x} - \frac{f(y)}{1 - y}\Bigr)
\end{split}
\end{equation}
holds for any $0 < x < 1 < y$.
    \end{condition}
    
\begin{theorem}
\label{theorem:FR-gradient-dominant-condition-sn}
    Given a convex function $f \in C^2(0,+\infty)$ with $f(1) = 0$ and $f'(1) = 0$, then \Cref{cond:n=2-GDC} is sufficient and necessary for gradient dominance (\Cref{cond:gradient-dominant-condition-f}) to hold.
\end{theorem}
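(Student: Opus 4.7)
My plan is to test the gradient dominance inequality \eqref{eq:gradient-dominant-condition-f} on a two-level ansatz. For any prescribed $0 < x < 1 < y$, I would choose $r > 0$ so that $\rho_r = \int_{S_r}\rhopo\dd\theta = (1-x)/(y-x)$ and take $\rho(\theta) = \rhopo(\theta)\bigl(x\, \mathbbm{1}_{S_r^c}(\theta) + y\, \mathbbm{1}_{S_r}(\theta)\bigr)$, mollifying near the boundary as in the proof of Theorem \ref{theorem:FR-displacement-convex}. A direct computation of both sides evaluates to
\begin{equation*}
\mathrm{LHS} = \tfrac{xy(1-x)(y-1)}{(y-x)^2}\bigl(f'(y)-f'(x)\bigr)^2, \qquad \mathrm{RHS} = \alpha_f\tfrac{(y-1)f(x)+(1-x)f(y)}{y-x},
\end{equation*}
and rearranging $\mathrm{LHS}\geq \mathrm{RHS}$ gives exactly Condition \ref{cond:n=2-GDC} with $\alpha = \alpha_f$. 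Since $(x,y)\in(0,1)\times(1,\infty)$ is arbitrary, necessity follows.

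\textbf{Sufficiency: reformulation.} Let $u = \rho/\rhopo$ and let $\mu$ denote the pushforward of $\rhopo\dd\theta$ under $\theta\mapsto u(\theta)$, so $\mu$ is a probability measure on $(0,\infty)$ with $\int u\dd\mu = 1$. The inequality \eqref{eq:gradient-dominant-condition-f} is then equivalent to
\begin{equation*}
A(\mu) \;:=\; \min_{c\in\R} \int u\bigl(f'(u)-c\bigr)^2 \dd\mu \;\geq\; \alpha_f \int f(u)\dd\mu,
\end{equation*}
the minimum being attained at $c^* = \int u f'(u)\dd\mu$. The key structural observation is that $A$ is \emph{concave} in $\mu$, as the pointwise infimum of the linear functionals $\mu\mapsto\int u(f'(u)-c)^2\dd\mu$.

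\textbf{Sufficiency: decomposition.} The core of the argument is an explicit convex decomposition of $\mu$ into two-point mean-one probability measures, reducing the global integral inequality to the pairwise Condition \ref{cond:n=2-GDC}. Writing $\mu = \mu_- + m_0\delta_1 + \mu_+$ with $\mu_\pm$ the restrictions of $\mu$ to $(0,1)$ and $(1,\infty)$, the mean-one constraint implies $E := \int(1-x)\dd\mu_-(x) = \int(y-1)\dd\mu_+(y)$. For each pair $(x,y)\in(0,1)\times(1,\infty)$, let $\tau_{x,y} := \tfrac{y-1}{y-x}\delta_x + \tfrac{1-x}{y-x}\delta_y$ be the unique mean-one probability measure on $\{x,y\}$. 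Checking the marginals on $(0,1)$ and $(1,\infty)$ verifies the identity
\begin{equation*}
\mu \;=\; m_0\delta_1 + \int\!\!\int \tau_{x,y}\cdot \tfrac{y-x}{E}\dd\mu_-(x)\dd\mu_+(y),
\end{equation*}
and a short calculation shows the coefficients sum to one, so this is a genuine convex combination of mean-one probability measures. Combining concavity of $A$ (with $A(\delta_1)=0$), linearity of $\mu\mapsto\int f\dd\mu$, and the equivalence (already established by the necessity computation) of Condition \ref{cond:n=2-GDC} with the pointwise bound $A(\tau_{x,y})\geq \alpha\int f\dd\tau_{x,y}$, yields $A(\mu)\geq \alpha_f\int f\dd\mu$. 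I expect the principal technical hurdle to be the rigorous measure-theoretic justification of the decomposition (particularly when $\mu_\pm$ are atomless or when mass accumulates near $u=1$, where one may need to invoke Fubini and integrability carefully) and to handle the degenerate cases $m_\pm = 0$, where the mean-one constraint forces $\mu = \delta_1$ and both sides vanish trivially.
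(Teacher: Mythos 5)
Your necessity argument is essentially the paper's (two-level sphere ansatz, mollified as in the proof of Theorem \ref{theorem:FR-displacement-convex}), and the algebra checks out: multiplying Condition \ref{cond:n=2-GDC} through by $\tfrac{xy(1-x)(y-1)}{(y-x)^2}$ gives exactly your two-point inequality $A(\tau_{x,y})\geq\alpha\int f\,\dd\tau_{x,y}$. Your sufficiency argument, however, takes a genuinely different and cleaner route. The paper first localizes to bounded domains (Lemma \ref{lemma-localization-of-density}), discretizes to the probability simplex, and then runs a linear program in the weights with the mean $M=\sum_i\rho_i f'(x_i)$ held \emph{fixed} as a third equality constraint; basic feasible solutions then have three nonzero coordinates, and a further case analysis (Lemmas \ref{cond-b-GDC} and \ref{lem-2-point-to-3-point}) is needed to pass from the two-point condition to the three-point inequality, at the cost of degrading the constant to $\alpha_f=\alpha_\delta\delta/(1+\delta)$. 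You instead minimize over the centering constant $c$ \emph{before} decomposing, which turns the left-hand side into the concave functional $A=\inf_c L_c$ of the pushforward measure; only the two constraints $\mu\bigl((0,\infty)\bigr)=1$ and $\int u\,\dd\mu=1$ remain, so the relevant extreme points are two-point mean-one measures, and your explicit mixture identity (whose marginals on $(0,1)$ and $(1,\infty)$ I verified via Tonelli, with total weight $m_0+m_-+m_+=1$), combined with $\inf_c\int\geq\int\inf_c$, $A(\delta_1)=0$, and $f\geq 0$ (from convexity with $f(1)=f'(1)=0$), closes the argument entirely in $[0,+\infty]$. This bypasses the localization, the discretization, and the whole three-point case analysis, and it yields the sharper constant $\alpha_f=\alpha$. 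The points deserving explicit care are the ones you already flag, plus two small observations: $E$ is automatically finite since $\int(y-1)\,\dd\mu_+\leq\int u\,\dd\mu=1$, and $E=0$ forces $\mu=\delta_1$, where both sides vanish.
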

\begin{proof}[Sketch of Proof]
The proof breaks into the following three steps. In the first step, we use measure theoretical tools to reduce the gradient dominance condition to an inequality on the discrete probability simplex, which is simpler to deal with. In the second step, we use novel \textit{linear programming techniques} to reduce the global nonlocal term in the inequality
to a three-point interaction term. Finally, we show the two-point version of inequality, which is the sufficient and necessary condition, implies the three-point ones, thus completing the proof. We present a sketch of the main techniques here and refer to the Appendix for detailed proofs of the arguments.
\vspace{.4em}
\paragraph{\bf Step 1: Reducing to an inequality on the discrete probability simplex} We need the following lemma, whose proof is in \cref{proof:FR-gradient-dominant-condition-sn}. 
\begin{lemma}
\label{lemma-localization-of-density}
    Given smooth densities $\rho$ and $\rho^*$ defined on $\mathbb{R}^d$, there exists a sequence of bounded domains $\Omega_n \subset \mathbb{R}^d$ for $n\geq 1$ such that 
    \begin{equation}
        \Omega_n \subset \Omega_{n+1}, \quad \bigcup_{n\geq 1} \Omega_n = \mathbb{R}^d, \quad \text{and}\quad \int_{\Omega_n} \rho(\theta) {\rm d}\theta = \int_{\Omega_n} \rho^*(\theta) {\rm d}\theta.
    \end{equation}
\end{lemma}
Based on \cref{lemma-localization-of-density}, consider the truncated and normalized densities
\begin{equation}
    \rho_n(\theta) = \frac{1}{\int_{\Omega_n} \rho(\theta') {\rm d}\theta'}\rho(\theta)\mathbbm{1}_{\Omega_n}(\theta)\quad \text{and} \quad \rho^*_n(\theta) = \frac{1}{\int_{\Omega_n} \rho^*(\theta') {\rm d}\theta'}\rho^*(\theta)\mathbbm{1}_{\Omega_n}(\theta).
\end{equation}
Claim: it suffices to show for all $n\geq 1$, the following holds
\begin{equation}
\label{eqn-gradient-dominance-bounded}
    \int \rho_n\Bigl(  f'\bigl(\frac{\rho_n}{\rho^*_n}\bigr) -  c\Bigr)^2 \dd \theta
 \geq  \alpha_f D_f[\rho_n \Vert \rho^*_n],
\end{equation}
for any constant $c \in \mathbb{R}$. The rationale behind this claim is that, once the above holds, we can take $n \to \infty$. Here we have
\begin{equation}
    \lim_{n \to \infty} \int \rho_n\Big(f'\bigl(\frac{\rho_n}{\rho^*_n}\bigr) -  c\Big)^2 \dd \theta = \frac{\lim_{n \to \infty} \int_{\Omega_n} \rho \Big(f'\bigl(\frac{\rho}{\rho^*}\bigr) -  c\Big)^2 \dd \theta}{\lim_{n \to \infty} \int_{\Omega_n} \rho(\theta) {\rm d}\theta} = \int \rho\Bigl(  f'\bigl(\frac{\rho}{\rhopo}\bigr) -  c\Bigr)^2,
\end{equation}
where the convergences of the numerator and denominator are due to the monotone convergence theorem. Moreover, by Fatou's lemma, we have 
$\underline{\lim}_{n\to \infty} D_f[\rho_n \Vert \rho^*_n] \geq D_f[\rho \Vert \rho^*]$. Combining the two limits together, we get 
\begin{equation}
    \int \rho\Bigl(  f'\bigl(\frac{\rho}{\rhopo}\bigr) -  c\Bigr)^2 \geq \alpha_f D_f[\rho \Vert \rho^*].
\end{equation}
By setting $c = \E_{\rho}[f'(\frac{\rho}{\rho^{*}})]$ to achieve the minimum on the left hand side, we arrive at the desired gradient dominance condition. 

Therefore, it suffices to prove the claim in \eqref{eqn-gradient-dominance-bounded}, which is the same inequality as \eqref{eq:gradient-dominant-condition-f} but applied to densities in bounded domains. Suppose $\Omega_n = \cup_{k=1}^K \omega_k$ is partitioned to $K$ disjoint subdomains. We can approximate $\rho_n$ and $\rho_n^*$ by 
$\sum_{k=1}^K \frac{\int_{\omega_k} \rho_n {\rm d}\theta}{\int_{\omega_k} {\rm d}\theta} \mathbbm{1}_{\omega_k}$ and $\quad \sum_{k=1}^K \frac{\int_{\omega_k} \rho_n^* {\rm d}\theta}{\int_{\omega_k} {\rm d}\theta} \mathbbm{1}_{\omega_k}$.
If we can prove the inequality for the above approximate densities, we can then take finer and finer partitions and as $K \to \infty$, we will get the corresponding inequality for $\rho_n$ and $\rho^*$. This is true because we work in a bounded domain $\Omega_n$, for which we can apply uniform convergence to establish the limit; note that $\rho$ and $\rho^*$ are assumed smooth.

As a consequence, it suffices to prove the inequality when the densities are summation of indicator functions. By an abuse of notations, we denote the corresponding mass of the above approximations in each support of the indicator functions by $\rho_i, \rho_i^*$ for $1\leq i \leq K$. Furthermore, denote by $x_i = \frac{\rho_i}{\rho_i^*}$. It suffices to show
\begin{align}
\label{eqn-discrete-inequality}
\sum_{i=1}^K\rho_i \Bigl( f'(x_i) - \sum_{j=1}^K \rho_j f'(x_j)   \Bigr)^2 \geq \alpha_f \sum_i \frac{\rho_i}{x_i} f(x_i),
\end{align}
for any $\{\rho_i\}_{i=1}^K$ and $\{x_i\}_{i=1}^K$ that satisfy
\begin{align}
\label{eq:GDC-Df-discretized-constraints}
    &\sum_{i=1}^K \rho_i = 1,  \quad \sum_{i=1}^K \frac{\rho_i}{x_i} = 1, \quad
    \rho_i > 0, \quad x_i > 0.
\end{align}
This can be viewed as a reduced gradient dominance condition for probability densities on a discrete simplex.
\vspace{.4em}
\paragraph{\bf Step 2: Reducing the nonlocal term to a three-point interaction}
First, we derive an equivalent form of the inequality. For the left hand side, we have
\begin{equation}
\begin{aligned}
    &\sum_{i=1}^K\rho_i \bigl( f'(x_i) - \sum_{j=1}^K \rho_j f'(x_j)   \bigr)^2 =\sum_{i=1}^K\rho_i \bigl(f'(x_i)\bigr)^2 -  \bigl(\sum_{j=1}^K \rho_j f'(x_j)\bigr)^2\\
    = &\frac{1}{2}\left(\sum_{i=1}^K\rho_i \bigl(f'(x_i)\bigr)^2 + \sum_{j=1}^K\rho_j \bigl(f'(x_j)\bigr)^2 - 2\bigl(\sum_{i=1}^K \rho_j f'(x_i)\bigr)\bigl(\sum_{j=1}^K \rho_j f'(x_j)\bigr) \right)\\
    = & \frac{1}{2}\sum_{1\leq i,j \leq K}\rho_i\rho_j \bigl(f'(x_i) - f'(x_j)\bigr)^2.
\end{aligned}
\end{equation}
For the right hand side, we get
\begin{align}\begin{split}
    \sum_{i=1}^K \frac{\rho_i}{x_i} f(x_i)
    &=\sum_{i=1}^K \frac{\rho_i}{x_i(1-x_i)} f(x_i)-\sum_{i=1}^K \frac{\rho_i}{1-x_i} f(x_i)\\
    &=\sum_{1\leq i,j \leq K} \rho_j \frac{ \rho_i}{x_i(1-x_i)} f(x_i)-\sum_{1\leq i,j \leq K} \frac{\rho_j}{x_j}\frac{\rho_i}{1-x_i} f(x_i)\\
    &=\sum_{1\leq i,j \leq K}  \rho_i \rho_j (\frac{ 1}{x_i} - \frac{ 1}{x_j} ) \frac{f(x_i)}{1-x_i} \\
    &= \frac{1}{2}\sum_{1\leq i,j\leq K} \rho_j \rho_i  (\frac{1}{x_j} - \frac{1}{x_i})\Bigl( \frac{f(x_j)}{1 - x_j} - \frac{f(x_i)}{1 - x_i} \Bigr),
\end{split}\end{align}
where to get the second line we used \eqref{eq:GDC-Df-discretized-constraints}. We remark that since $f\in C^2(0,+\infty)$ and $f(1)=0$, $f(x)/(1-x)$ is well defined and there is no singularity at $x=1$ (we define the value here through the limit $x\to 1$). Therefore, the equivalent inequality is
\begin{align}
\label{eq:gradient-dominant-condition-f0}
 \sum_{1\leq i,j\leq K}  \rho_i\rho_j\Bigl(f'(x_i)-f'(x_j)\Bigr)^2 \geq \alpha_f\sum_{1\leq i,j\leq K}  \rho_i\rho_j(\frac{1}{x_i} - \frac{1}{x_j}) \Bigl(\frac{f(x_i)}{1-x_i } - \frac{f(x_j)}{1-x_j}\Bigr) .
\end{align}
When $K=2$, denote $x = x_1$ and $y = x_2$ with $0 < x< 1< y$. Solving \cref{eq:GDC-Df-discretized-constraints}, we obtain $\rho_1 = \frac{x(y-1)}{y-x}$ and $\rho_2 = \frac{y(1-x)}{y-x}$. Then \cref{eq:gradient-dominant-condition-f0} becomes to 
\begin{equation}
\begin{split}
\exists\ \alpha> 0 \textrm{ s.t. }
& \bigl(f'(y) - f'(x)\bigr)^2  \geq  \alpha(\frac{1}{x} - \frac{1}{y})\Bigl( \frac{f(x)}{1 - x} - \frac{f(y)}{1 - y}\Bigr),  \ \forall \ 0 < x< 1< y
\end{split}
\end{equation}
with $\alpha = \alpha_f$. This is the same as \cref{cond:n=2-GDC}. It represents a special case for the discrete inequality \cref{eqn-discrete-inequality} with $K=2$ and thus provides a necessary condition for the discrete inequality. It is also necessary for the continuous inequality \cref{eq:gradient-dominant-condition-f} by smoothing the indicator functions. It remains to show that \cref{cond:n=2-GDC} is also sufficient, i.e., the case $K=2$ holds implies that the inequality \cref{eq:gradient-dominant-condition-f0} holds for any $K\geq 3$.

We first show that cases $K \geq 3$ can be reduced to $K=3$. To do so, we use the original formulation \eqref{eqn-discrete-inequality}. We fix $x_i > 0$, $\alpha_f > 0$, and $M = \sum_{i=1}^K \rho_i f'( x_i )$, and consider the following linear programming problem with respect to variables $\{\rho_i\}_{i=1}^K$ 
\begin{subequations}
\begin{align}
    \min  &\sum_{i=1}^K \rho_i \Bigl(f'(x_i)^2 - \alpha_f  \frac{f(x_i)}{x_i}\Bigr) - M^2  \label{eq:lp-obj}
    \\
\textrm{s.t.}& \sum_{i=1}^K \rho_i = 1,\  \sum_{i=1}^K \frac{\rho_i}{x_i} = 1, \ \sum_{i=1}^K \rho_i f'( x_i ) = M,\  \rho_i \geq 0. \label{eq:lp-constraints}
\end{align}
\end{subequations}
The linear programming problem has a feasible solution and the constraint set is bounded. We claim that there is a minimizer that has at most $3$ nonzero $\rho_i$. Otherwise, without loss of generality, we assume for the minimizer $\rho$, it holds $\rho_j > 0$ for $1 \leq j \leq 4$. 
We can perturb $\rho$ in the direction $\vec{d} = [d_1,\,d_2,\,d_3,\,d_4]$ satisfying $\sum_{j=1}^K d_j = 0$, $\sum_{j=1}^K \frac{d_j}{x_j} = 0$, and $\sum_{j=1}^K d_j f'(x_j) = 0$. Since there are three linear equations in four variables, there exists a nonzero solution $\vec{d}$. Then, for any $\epsilon$, the perturbed vector $\rho + \epsilon\vec{d}$ still satisfies the constrains \cref{eq:lp-constraints}. By treating both the objective function in \cref{eq:lp-obj} and $\rho$ as functions of $\epsilon$, and choosing $\epsilon$ appropriately (either negative or positive), we can decrease the objective function or increase the number of zero entries in $\rho$. 
This leads to a contradiction so the claim is true. This means that we only need to prove the inequality for the case $K=3$. Using the equivalent formulation \eqref{eq:gradient-dominant-condition-f0}, assuming \eqref{eq:n=2-GDC} holds, we need to show
\begin{align}
\label{eq:3-point-inequality}
 \sum_{1\leq i,j\leq 3}  \rho_i\rho_j\Bigl(f'(x_i)-f'(x_j)\Bigr)^2 \geq \alpha_f\sum_{1\leq i,j\leq 3}  \rho_i\rho_j(\frac{1}{x_i} - \frac{1}{x_j}) \Bigl(\frac{f(x_i)}{1-x_i} - \frac{f(x_j)}{1-x_j}\Bigr) .
\end{align}

\vspace{.4em}
\paragraph{\bf Step 3: Showing the three-point inequality is implied by two-point ones}
The last step is to use the inequality in the case of $K=2$ \eqref{eq:n=2-GDC} to prove \eqref{eq:3-point-inequality}. We have the following two technical lemmas whose proofs are in \cref{proof:FR-gradient-dominant-condition-sn}.
\begin{lemma}
\label{cond-b-GDC}
Given a convex function $f \in C^2(0,+\infty)$ with $f(1) = 0$ and $f'(1) = 0$, assume that \cref{cond:n=2-GDC} holds. Then there exists $0 < \delta < 1$, and $0 < \alpha_\delta \leq \alpha $, such that
\begin{align}
\label{eq:condition-b-GDC-3}
\bigl(f'(y) - f'(x)\bigr)^2  \geq  \alpha_\delta(\frac{1}{x} - \frac{1}{y})\Bigl( \frac{f(x)}{1 - x} - \frac{f(y)}{1 - y}\Bigr),\ \forall\ x \in (1-\delta, 1+\delta)\ \textrm{and}\ y > 0.
\end{align}
\end{lemma}
\begin{lemma}
\label{lem-2-point-to-3-point}
Given a convex function $f \in C^2(0,+\infty)$ with $f(1) = 0$ and $f'(1) = 0$ and under the constraint \eqref{eq:GDC-Df-discretized-constraints}, conditions \eqref{eq:n=2-GDC} and \eqref{eq:condition-b-GDC-3} implies \eqref{eq:3-point-inequality}.
\end{lemma}
With the above lemmas, the sufficient condition part is proved.
\end{proof}

\subsection{An easier-to-verify sufficient condition}
We also provide a simpler and easier-to-verify sufficient condition on $f$ that guarantees gradient dominance~\cref{cond:gradient-dominant-condition-f} holds.
\begin{proposition}
\label{theorem:FR-gradient-dominant-condition-suff}
    Assume  $f\in C^2(0,+\infty)$ is a  convex function with $f(1)=0$ and there exists $\alpha_s>0$ such that $x^2f''(x) > \alpha_s$ for $0<x\leq 1$.
    Then \cref{cond:n=2-GDC} holds. Note that the assumptions are satisfied with $f''(x) = x^p$ and $p\leq -2$.
\end{proposition}
\begin{proof}
The assumption $x^2f''(x) > \alpha_s$ for $0<x\leq 1$ implies that $f''(1) > \alpha_s > 0$. Combining this with the condition $f\in C^2$ ensures the existence of $0< \delta < \frac{1}{2}$, such that
\begin{equation}\label{e:f''bound}
    \frac{f''(1)}{2} \leq f''(x) \quad \text{for} \quad 1 - 2\delta \leq x\leq 1+2\delta.
\end{equation}
The convexity of $f(x)$ leads to
\begin{equation}
\label{eq:GDC-fx/1-x}
    (x - 1)f'(x) \geq f(x)-f(1)=f(x), \qquad \forall x > 0.
\end{equation}
With these preparations, we prove that for $0 < x < 1 < y$, \cref{cond:n=2-GDC} holds with $\alpha = \alpha_s \delta$.
This can be derived by multiplying the following two inequalities:
\begin{align*}
  &\phantom{{}={}}f'(y) - f'(x) =   \int_x^1 f''(\xi) {\rm d}\xi + \int_1^y f''(\xi) {\rm d}\xi
  \\
  &\geq
  \int_x^1 \frac{\alpha_s}{\xi^2} {\rm d}\xi + \min\{y-1,2\delta\} \frac{f''(1)}{2} \quad (\textrm{using } \xi^2f''(\xi)\geq\alpha_s \textrm { and \eqref{e:f''bound}})
  \nonumber
  \\
  &\geq
 \alpha_s\bigl(\frac{1}{x} - 1\bigr) +\delta f''(1)\bigl(1 - \frac{1}{y}\bigr)  \quad \!\!(\textrm{using } \min\{\frac{y-1}{2},\delta\} \geq\frac{\delta}{y}(y-1) \textrm{ due to } y>1>2\delta)
  \nonumber
  \\
  &\geq
   \alpha_s \delta \bigl(\frac{1}{x} - \frac{1}{y}\bigr) \quad (\textrm{using } \alpha_s > \alpha_s\delta \,\textrm{ and }\, f''(1)\geq \alpha_s) \nonumber
  \\
  &\phantom{{}={}} f'(y) -f'(x)  \geq  \frac{f(y)}{y - 1} + \frac{f(x)}{1 - x}  \quad (\textrm{using  \eqref{eq:GDC-fx/1-x} for both $x$ and $y$}).
\end{align*}
The proof is complete.
\end{proof}
\begin{remark}
    Recall that in general, the gradient dominance condition is implied by strong geodesic convexity of the energy functional. 
    In \cref{theorem:postive-uniform-convex}, a sufficient condition for strong geodesic convexity is that $xf'(x)$ is concave and $f''(1)>0$. If $xf'(x)$ is concave, we can show that $x^2f''(x)\geq f''(1)$ for any $0<x\leq 1$. To do so, we rewrite $x^2f''(x)$ in terms of $xf'(x)$ as
    $x^2f''(x)=x\bigl(xf'(x)\bigr)'-xf'(x).$ If $xf'(x)$ is concave, then $h(x) := \bigl(xf'(x)\bigr)'=xf''(x)+f'(x)$ is nonincreasing. We have
    \begin{align*}
        x^2f''(x)-f''(1)
        &=\bigl(xh(x)-xf'(x)\bigr)- \bigl(h(1)-f'(1)\bigr)\\
        &= \Bigl(\int_0^x h(x) {\rm d}\xi - \int_0^x h(\xi)\dd \xi\Bigr)  - \Bigl(\int_0^1 h(1){\rm d}\xi -  \int_0^1 h(\xi)\dd \xi\Bigr) \\
        &= \int_0^x h(x) {\rm d}\xi  + \int_x^1 h(\xi)\dd \xi - \int_0^1 h(1){\rm d}\xi\\
        &=\int_0^x h(x)- h(1)\dd \xi + \int_x^1 h(\xi)-h(1)\dd \xi  \\
        &\geq 0 \quad (\textrm{using } h(1) \leq h(x),h(\xi) \,\textrm{in}\, (0,1]).
    \end{align*}
    This implies that the assumption in \Cref{theorem:FR-gradient-dominant-condition-suff} is satisfied with $\alpha_s=f''(1)$, so the gradient dominance holds. The argument here thus establishes a connection between \Cref{theorem:FR-gradient-dominant-condition-suff} and \Cref{theorem:postive-uniform-convex}.
 \end{remark}

\section{Functional Inequality: Dual Gradient Dominance Condition}
\label{sec-Functional Inequality: Dual Gradient Dominance Condition}

The KL divergence is the most widely used energy functional for sampling. However, the negative results in \Cref{theorem:FR-displacement-convex,theorem:FR-gradient-dominant-condition} indicate that the concepts of geodesic convexity and gradient dominance condition are insufficient for analyzing the convergence of the Fisher-Rao gradient flow of the KL divergence.

In this section, we introduce a dual gradient dominance condition. This condition provides a more powerful functional inequality for understanding the convergence behavior of Fisher-Rao gradient flows associated with various $f$-divergences. To the best of our knowledge, such dual functional inequalities have not been explored in the Wasserstein setting, making this a novel discovery in the context of Fisher-Rao geometry.

\subsection{Non-increasing of general $f$-divergences}
Given a general convex function $\dualf\in C^2(0,+\infty), \dualf(1)=0$, we consider the following $\bar{f}$-divergence
\begin{equation}
D_\dualf[\rho \Vert  \rhopo]  = \int \rhopo \dualf(\frac{\rho}{\rhopo})\dd\theta.
\end{equation}
Along the Fisher-Rao gradient flow~\eqref{eq:FR-gf-f} with respect to  $D_f$, we have that
\begin{align}
\label{eq:dtDg}
&\frac{\dd}{\dd t} D_\dualf[\rho_t \Vert  \rhopo]
= \int \partial_t\rho_t \dualf'\bigl(\frac{\rho_t}{\rhopo}\bigr) \dd\theta
\\
=& \int \Bigl(-\rho_t(\theta)f'(\frac{\rho_t(\theta)}{\rhopo(\theta)}) + \rho_t(\theta) \int \rho_t(\theta') f'(\frac{\rho_t(\theta')}{\rhopo(\theta')}) \dd\theta' \Bigr) \dualf'\bigl(\frac{\rho_t(\theta)}{\rhopo(\theta)}\bigr) \dd\theta
\nonumber\\
=& \int \int \rho_t(\theta)\rho_t(\theta')\Bigl(-f'(\frac{\rho_t(\theta)}{\rhopo(\theta)}) +  f'(\frac{\rho_t(\theta')}{\rhopo(\theta')})  \Bigr) \dualf'\bigl(\frac{\rho_t(\theta)}{\rhopo(\theta)}\bigr) \dd\theta' \dd\theta
\nonumber\\
=& \frac{1}{2}\int \int \rho_t(\theta)\rho_t(\theta')\Bigl(-f'(\frac{\rho_t(\theta)}{\rhopo(\theta)}) +  f'(\frac{\rho_t(\theta')}{\rhopo(\theta')})  \Bigr) \Bigl(\dualf'\bigl(\frac{\rho_t(\theta)}{\rhopo(\theta)}\bigr) -  \dualf'\bigl(\frac{\rho_t(\theta')}{\rhopo(\theta')}\bigr) \Bigr) \dd\theta' \dd\theta
\nonumber\\
\leq& 0. \nonumber
\end{align}
Thus, any $D_\dualf$ is non-increasing along the Fisher-Rao gradient flow with respect to $D_f$. Here in the last inequality, we used the fact that both $f'(x)$ and $\dualf'(x)$ are non-decreasing functions.
\subsection{Dual gradient dominance condition}
Motivated by the non-increasing of $D_\dualf$ for any $\dualf$, in the following we introduce the dual gradient dominance condition. The idea is that we require the derivative in \eqref{eq:dtDg} to be lower bounded by a constant multiplying the sum of $D_f$ and $D_\dualf$, similar to the gradient dominance condition but applied to a different energy functional $D_\dualf$.
\begin{condition}[Dual gradient dominance condition]\label{cond:generalized-gradient-dominant-condition-f}
The function $f$ is said to satisfy the dual gradient dominance condition if there exists a constant $\alpha_f > 0$, depending only on $f$, such that
    \begin{equation}
    \begin{split}
-\int \Bigl(-\rho f'(\frac{\rho}{\rhopo}) + \rho \E_{\rho}\bigl[ f'(\frac{\rho}{\rhopo}) \bigr] \Bigr) \dualf'(\frac{\rho}{\rhopo})\dd\theta
\geq \alpha_f \Bigl( D_\dualf[\rho \Vert \rhopo]  + D_f[\rho \Vert \rhopo] \Bigr),
    \end{split}
\end{equation}
for any positive densities $\rho$ and $\rhopo$, provided that $f'(\frac{\rho}{\rhopo})$ is absolutely integrable with respect to $\rho$.
\end{condition}
\begin{newremark}
\label{rmk:dual-gdc}
  If the dual gradient dominance condition holds along the gradient flow of $D_f$, then we have
\begin{align*}
&\frac{{\rm d}}{{\rm d}t} D_\dualf[\rho_t \Vert  \rhopo] \leq -\alpha_f  D_\dualf[\rho_t \Vert  \rhopo], \\ 
&\frac{{\rm d}}{{\rm d}t}(D_f[\rho_t \Vert  \rhopo] + D_\dualf[\rho_t \Vert  \rhopo]) \leq -\alpha_f (D_f[\rho_t \Vert  \rhopo] + D_\dualf[\rho_t \Vert  \rhopo]), 
\end{align*}
which imply exponential convergence, i.e., $\bigO(\exp(-\alpha_f t))$, of $D_\dualf[\rho_t \Vert  \rhopo]$, provided the initial condition $\rho_0$ satisfies $D_\dualf[\rho_0 \Vert  \rhopo] < \infty$. Similar comments apply to $D_f[\rho_t \Vert  \rhopo] + D_\dualf[\rho_t \Vert  \rhopo]$ provided $D_f[\rho_0 \Vert  \rhopo] + D_\dualf[\rho_0 \Vert  \rhopo] < \infty$. We note that commonly used particle-based initializations in sampling often violate this condition, while this condition may be satisfied by employing Gaussian \cite{huang2022efficient,chen2023sampling}, Gaussian mixture \cite{lin2019fast,chen2024efficient,che2025stable}, or kernel density approximations \cite{maurais2024sampling,zhu2024kernel}. 
\end{newremark}
In the following, we will consider two special choices of $\dualf$:
\begin{enumerate}
    \item $\dualf(x) = \frac{(x-1)^2}{x}$, for which $D_\dualf$ corresponds to the reverse $\chi^2$ divergence.
    \item $\dualf(x) = xf(\frac{1}{x})$, which corresponds to the $*$-conjugate of $f$ \cite{osterreicher2002csiszar}.
\end{enumerate}
For both choices, we will prove the dual gradient dominance condition holds under certain assumptions on $f$. Specifically, the first choice leads to a general result, while the second choice leads to tighter results for specific $f$ including the KL divergence.
\subsection{General results using the reverse $\chi^2$ divergence as the dual}

We take $\dualf(x) = \frac{(x-1)^2}{x}$, the $f$-divergence associated with $\dualf$ is the reverse $\chi^2$ divergence
\begin{equation*}
D_\dualf[\rho\Vert\rhopo]=\chi^2[\rhopo \Vert  \rho]  = \int \frac{\rhopo^2}{\rho}\dd\theta - 1.
\end{equation*}
Along the Fisher-Rao gradient flow~\eqref{eq:FR-gf-f} with respect to $D_f$, the derivative \eqref{eq:dtDg} is given explicitly as
\begin{equation*}
\begin{aligned}
\frac{\dd}{\dd t} \chi^2[\rhopo \Vert  \rho_t]
&= \int \partial_t\rho_t \Bigl(1 - \bigl(\frac{\rhopo}{\rho_t}\bigr)^2\Bigr)  \dd\theta \\
&= \int \Bigl(-\rho_t f'(\frac{\rho_t}{\rhopo}) + \rho_t \E_{\rho_t}\bigl[ f'(\frac{\rho_t}{\rhopo}) \bigr] \Bigr) \Bigl(1 - \bigl(\frac{\rhopo}{\rho_t}\bigr)^2\Bigr) \dd\theta.
\end{aligned}
\end{equation*}
We have the following theorem:
\begin{theorem}
\label{theorem:generalized-ineq}
    Given a convex function $f \in C^2(0,+\infty)$ with $f(1) = 0$ and $f''(1) > 0$, there exists an $\alpha_{\chi^2,f} > 0$ that depends on $f$ only, such that
  \begin{subequations}
  \begin{align}
    -\int \Bigl(-\rho f'\Bigl(\frac{\rho}{\rhopo}\Bigr) + \rho \E_{\rho}\bigl[ f'\Bigl(\frac{\rho}{\rhopo}\Bigr) \bigr]\Bigr) \Bigl(1 - \bigl(\frac{\rhopo}{\rho}\bigr)^2\Bigr) \dd\theta
    \geq D_f[\rho \Vert \rhopo], \label{eq:generalized-gradient-dominant-condition-1}
    \\
    -\int \Bigl(-\rho f'(\frac{\rho}{\rhopo}) + \rho \E_{\rho}\bigl[ f'(\frac{\rho}{\rhopo}) \bigr] \Bigr) \Bigl(1 - \bigl(\frac{\rhopo}{\rho}\bigr)^2\Bigr) \dd\theta
 \geq \alpha_{\chi^2,f} \chi^2[\rhopo \Vert \rho] \label{eq:generalized-gradient-dominant-condition-2}.
\end{align}
\end{subequations}
The above inequalities hold for any {positive densities $\rho$ and $\rhopo$, provided that $f'(\frac{\rho}{\rhopo})$ is absolutely integrable with respect to $\rho$.
}
\end{theorem}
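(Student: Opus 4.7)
My plan is to follow the three-step discretization-and-LP framework used in the proof of Theorem~\ref{theorem:FR-gradient-dominant-condition-sn}. First, the symmetrization identity in \eqref{eq:dtDg} rewrites the common left-hand side of \eqref{eq:generalized-gradient-dominant-condition-1} and \eqref{eq:generalized-gradient-dominant-condition-2} as the symmetric bilinear form
\[
\frac12\int\!\!\int \rho(\theta)\rho(\theta')\bigl(f'(x)-f'(x')\bigr)\bigl(\bar f'(x)-\bar f'(x')\bigr)\dd\theta\,\dd\theta',
\]
where $x = \rho(\theta)/\rhopo(\theta)$, $x' = \rho(\theta')/\rhopo(\theta')$, and $\bar f'(x) = 1 - 1/x^2$. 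Using Lemma~\ref{lemma-localization-of-density} together with the indicator/mollifier approximation of Step~1 in the proof of Theorem~\ref{theorem:FR-gradient-dominant-condition-sn}, both inequalities reduce to discrete statements on probability vectors $\{q_i\}$ with $\sum_i q_i = 1$ and $\sum_i q_i/x_i = 1$; the right-hand sides $D_f[\rho\Vert\rhopo]$ and $\chi^2[\rhopo\Vert\rho]$ can be put in symmetric bilinear form via the identities $\sum_i q_i f(x_i)/x_i = \frac12\sum_{ij}q_iq_j(1/x_i-1/x_j)(f(x_i)/(1-x_i) - f(x_j)/(1-x_j))$ and $\chi^2[\rhopo\Vert\rho] = \frac12\sum_{ij}q_iq_j(1/x_i-1/x_j)^2$.

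The core case is $K=2$ with $0 < x < 1 < y$, where $\rho_1 = x(y-1)/(y-x)$, $\rho_2 = y(1-x)/(y-x)$, and a direct computation yields $\chi^2[\rhopo\Vert\rho] = (y-1)(1-x)/(xy)$. After substitution, inequality \eqref{eq:generalized-gradient-dominant-condition-1} reduces to
\[
\frac{(x+y)\bigl(f'(y)-f'(x)\bigr)}{xy} \;\geq\; \frac{f(x)}{1-x} + \frac{f(y)}{y-1},
\]
and \eqref{eq:generalized-gradient-dominant-condition-2} reduces to $(x+y)(f'(y)-f'(x)) \geq 2\alpha_f(y-x)$. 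For the first, convexity of $f$ with $f(1)=0$ and $f'(1)=0$ gives $f(x)/(1-x) \leq -f'(x)$ on $(0,1)$ and $f(y)/(y-1) \leq f'(y)$ on $(1,\infty)$, so the right-hand side is at most $f'(y)-f'(x)$; combined with $x + y - xy = x + y(1-x) > 0$ on $0<x<1<y$, the bound follows. For the second, continuity of $f''$ with $f''(1) > 0$ yields $\delta > 0$ such that $f''(t) \geq f''(1)/2$ on $[1-\delta,1+\delta]$, and a case analysis on the positions of $x, y$ relative to $[1-\delta,1+\delta]$ in the style of Theorem~\ref{theorem:FR-gradient-dominant-condition-suff} produces $\alpha_f$ depending only on $f''(1)$ and $\delta$.

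To pass from $K=2$ to general $K$, I adapt the linear-programming trick of Step~2 in the proof of Theorem~\ref{theorem:FR-gradient-dominant-condition-sn}: fixing $\{x_i\}$ together with the auxiliary scalars $M_f = \sum_i q_i f'(x_i)$ and $M_{\bar f} = \sum_i q_i \bar f'(x_i)$ makes the bilinear LHS equal to $\sum_i q_i f'(x_i)\bar f'(x_i) - M_f M_{\bar f}$, which is affine in $q$, while the right-hand sides of both inequalities are already affine in $q$ after these fixings. The resulting LP over the polytope cut out by four equality constraints and $q_i \geq 0$ attains its minimum at a vertex with at most four nonzero $q_i$, so it suffices to prove both inequalities for $K \leq 4$.

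\textbf{Main obstacle.} The hard part is the final reduction from $K \leq 4$ back to the two-point case, an analog of Lemma~\ref{lem-2-point-to-3-point}. Unlike Theorem~\ref{theorem:FR-gradient-dominant-condition-sn}, a direct pointwise strengthening is unavailable here: a direct check with $f(x) = (x-1)^2$ at $x = 5$, $y = 10$ shows the pointwise integrand of LHS$-$RHS is already negative, and pairs $(x_i, x_j)$ lying on the same side of $1$ always contribute unfavorably. Overcoming this requires a combinatorial pairing argument exploiting the mass balance $\sum_i q_i(1 - 1/x_i) = 0$: partition indices into $\{x_i < 1\}$ and $\{x_j > 1\}$, combine two-point bounds across partitions, and invoke a strengthened two-point inequality valid for $x$ near $1$ and arbitrary $y > 0$ analogous to Lemma~\ref{cond-b-GDC}. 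Designing this pairing so that the cross-partition slack absorbs the same-partition deficit is the most delicate step, and is where I expect the bulk of the technical work to concentrate.
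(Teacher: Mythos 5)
Your proposal is a plan with a genuine gap at its most difficult point, and you acknowledge as much. The $K=2$ computations are correct (the reduction of \eqref{eq:generalized-gradient-dominant-condition-1} to $\frac{x+y}{xy}(f'(y)-f'(x)) \geq \frac{f(x)}{1-x}+\frac{f(y)}{y-1}$ and its verification via convexity and $x+y-xy>0$ check out, as does the $\chi^2$ formula), and the LP reduction to at most four support points is plausible since the objective becomes affine in $q$ once $M_f$ and $M_{\bar f}$ are fixed. But the passage from $K\leq 4$ back to the two-point inequality is precisely where the nonlocal difficulty lives, and your own computation with $f(x)=(x-1)^2$ at $(5,10)$ shows that the term-by-term comparison underlying Lemma~\ref{lem-2-point-to-3-point} fails here: same-side pairs contribute negatively to LHS$-$RHS. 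No replacement argument is supplied, so the theorem is not proved. A secondary issue is that your Step~1 discretization (as in Theorem~\ref{theorem:FR-gradient-dominant-condition-sn}) leans on smoothness of the densities, whereas Theorem~\ref{theorem:generalized-ineq} is asserted for general positive densities under an integrability hypothesis only.

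It is worth contrasting this with the paper's actual proof, which bypasses discretization and the combinatorial pairing entirely by exploiting the special structure of $\bar f'(x)=1-1/x^2$. Setting $\mu^2=\int\rhopo^2/\rho\,\dd\theta\geq 1$, one has $\E_\rho[\bar f'(\rho/\rhopo)]=1-\mu^2$, so the common left-hand side collapses to the \emph{single} integral $\int\bigl(f'(\frac{1}{\mu})-f'(\frac{\rho}{\rhopo})\bigr)\bigl(\frac{\rhopo^2}{\rho^2}-\mu^2\bigr)\rho\,\dd\theta$ with a pointwise nonnegative integrand: the nonlocal interaction is linearized around the scalar reference point $1/\mu$. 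Both inequalities then follow from purely local (one-point) inequalities — Lemma~\ref{lem:generalized-gdc-1} for the $D_f$ bound and Lemma~\ref{lem:generalized-f''(1)} plus a case split on $\mu\lessgtr 1+\delta_f$ for the $\chi^2$ bound. If you wish to salvage your route, the missing multi-point lemma would effectively have to rediscover this cancellation; the scalar-$\mu$ reformulation is the idea your proposal is missing.
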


\begin{newremark}
    As a consequence of \Cref{theorem:generalized-ineq}, the Fisher-Rao gradient flow~\eqref{eq:FR-gf-f} associated with any $f$-divergence will satisfy the dual gradient dominance condition with rate $\alpha_f = \frac{\min\{1,\alpha_{\chi^2,f}\}}{2}$. Thus, assuming the flow is well-posed and using the discussions in \cref{rmk:dual-gdc}, we have that the flow converges exponentially fast:
 \begin{align*}
D_f[\rho_t \Vert  \rhopo] + \chi^2[\rhopo \Vert  \rho_t] \leq e^{-\frac{\min\{1,\alpha_{\chi^2,f}\}}{2} t}  \Bigl(D_f[\rho_0 \Vert  \rhopo] + \chi^2[\rhopo \Vert  \rho_0]\Bigr).
\end{align*}
We require that the initial condition $\rho_0$ satisfies $D_f[\rho_0 \Vert  \rhopo] + \chi^2[\rhopo \Vert  \rho_0] < \infty$.
\end{newremark}

\begin{proof}[Proof for \Cref{theorem:generalized-ineq}]
We denote
$\mu^2 = \int\frac{\rhopo^2}{\rho} \dd\theta \geq 1,$
then, the left hand side of \eqref{eq:generalized-gradient-dominant-condition-1} or \eqref{eq:generalized-gradient-dominant-condition-2} can be rewritten as
\begin{equation}
\label{eq:generalized-GDC-LHS}
\begin{split}
-\int \Bigl(-\rho f'\Bigl(\frac{\rho}{\rhopo}\Bigr) &+ \rho \E_{\rho}\bigl[ f'\Bigl(\frac{\rho}{\rhopo}\Bigr) \bigr]\Bigr) \Bigl(1 -  \frac{\rhopo^2}{\rho^2}\Bigr)\dd\theta\\
=&-\int f'\Bigl(\frac{\rho}{\rhopo}\Bigr)\frac{\rhopo^2}{\rho}\dd\theta + \int\frac{\rhopo^2}{\rho}\dd\theta \int{\rho}f'\Bigl(\frac{\rho}{\rhopo}\Bigr)\dd\theta\\
=&-\int f'\Bigl(\frac{\rho}{\rhopo}\Bigr)\Bigl(\frac{\rhopo^2}{\rho^2}-\mu^2\Bigr)\rho\dd\theta\\
=&\int \Bigl(f'\Bigl(\frac{1}{\mu}\Bigr)-f'\Bigl(\frac{\rho}{\rhopo}\Bigr)\Bigr)\Bigl(\frac{\rhopo^2}{\rho^2}-\mu^2\Bigr)\rho\dd\theta.
\end{split}
\end{equation}
To proceed, we use the following lemma of a local inequality; its proof can be found in \Cref{proof:dual-ineq-ns-cond}.
\begin{lemma}
\label{lem:generalized-gdc-1}
	Given smooth densities $\rho$ and $\rho^{*}$, define $\mu^2=\int\frac{\rhopo^2}{\rho}\dd\theta$, $\mu>0$. Then for convex $f$ and $\frac{\rho^*}{\rho} \in (0, +\infty)$, it holds that
 \begin{equation}
\label{eq:generalized-gdc-1}
\Bigl(f'(\frac{1}{\mu})-f'(\frac{\rho}{\rhopo})\Bigr)\Bigl(\frac{\rhopo^2}{\rho^2}-\mu^2\Bigr) \geq \mu(\frac{\rhopo}{\rho}-\mu)f'(\frac{1}{\mu})-\frac{\mu^2 \rhopo}{\rho}(f(\frac{1}{\mu})-f(\frac{\rho}{\rhopo})).
\end{equation}
\end{lemma}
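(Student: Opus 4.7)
The plan is to reduce the inequality to an algebraic identity on two positive reals and then recognize it as a sum of two manifestly nonnegative quantities coming directly from convexity of $f$. I would first introduce the shorthand $s = 1/\mu > 0$ and $q = \rho/\rho^*>0$, so the claim becomes
\begin{equation*}
\bigl(f'(s) - f'(q)\bigr)\Bigl(\frac{1}{q^2} - \frac{1}{s^2}\Bigr) \;\geq\; \frac{1}{s}\Bigl(\frac{1}{q} - \frac{1}{s}\Bigr)f'(s) - \frac{1}{s^2 q}\bigl(f(s) - f(q)\bigr).
\end{equation*}
Since $s^2 q^2 > 0$, I would multiply through by $s^2 q^2$ to clear denominators; the inequality is equivalent to
\begin{equation*}
\Phi(s,q) \;:=\; \bigl(f'(s) - f'(q)\bigr)(s^2 - q^2) + q\bigl(f(s) - f(q)\bigr) - q(s-q)f'(s) \;\geq\; 0.
\end{equation*}

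The key step, and the only nontrivial part of the argument, is to factor $\Phi(s,q)$ in a way that exposes its sign. I would expand $(f'(s) - f'(q))(s^2 - q^2) = (s-q)(s+q)(f'(s)-f'(q))$, split the coefficient $s+q$ into $s$ and $q$, and recombine the $q$-terms with the two remaining summands. After grouping, the identity
\begin{equation*}
\Phi(s,q) \;=\; s(s-q)\bigl[f'(s) - f'(q)\bigr] \;+\; q\bigl[f(s) - f(q) - (s-q)f'(q)\bigr]
\end{equation*}
emerges. This is the heart of the argument; I would verify it by direct expansion.

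Once the factorization is in hand, nonnegativity is immediate from the two standard characterizations of convexity of $f$: monotonicity of $f'$ gives $(s-q)(f'(s)-f'(q)) \geq 0$, and the tangent-line inequality at $q$ gives $f(s) - f(q) - (s-q)f'(q) \geq 0$. Since $s > 0$ and $q > 0$, both summands in the factorization are nonnegative, yielding $\Phi(s,q) \geq 0$ and hence the lemma. I do not foresee any real obstacle here: the constraint $\mu \geq 1$ (equivalently $s \leq 1$) coming from Cauchy--Schwarz is not actually needed, and the assumed regularity $f \in C^2(0,+\infty)$ with $f$ convex is exactly what makes the two convexity inequalities applicable. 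The main thing to double-check is the algebraic identity for $\Phi(s,q)$; everything after that is a one-line convexity argument.
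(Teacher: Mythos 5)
Your proof is correct, and the identity you verify for $\Phi(s,q)$ does hold; the decomposition into $s(s-q)\bigl(f'(s)-f'(q)\bigr)$ plus $q\bigl(f(s)-f(q)-(s-q)f'(q)\bigr)$ is exactly the slack that the paper's chain of inequalities discards, so the two arguments are essentially the same (the paper works in the variables $x=\rhopo/\rho$ and $\mu$, first combining the tangent-line inequality with the monotonicity of $f'$ and then dropping the nonnegative term coming from $\mu x(\mu+x)\geq \mu^2 x$). Your presentation as an exact two-term nonnegative decomposition after clearing denominators is a clean reorganization of the same underlying convexity facts, and you are right that $\mu\geq 1$ is not needed here.
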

Multiplying the local inequality~\eqref{eq:generalized-gdc-1} by $\rho$ and integrating over $\R^{d}$ leads to
\begin{align}
&\int \Bigl(f'\bigl(\frac{1}{\mu}\bigr)-f'\bigl(\frac{\rho}{\rhopo}\bigr)\Bigr)\Bigl(\frac{\rhopo^2}{\rho^2}-\mu^2\Bigr)\rho\dd\theta\nonumber\\
\geq&\int\mu(\rhopo-\mu\rho)f'\bigl(\frac{1}{\mu}\bigr)-\mu^2 \rhopo \Bigl(f(\frac{1}{\mu})-f(\frac{\rho}{\rhopo})\Bigr)\dd\theta \label{eq:generalized-gdc-1-intg}\\
=&\mu(1-\mu)f'(\frac{1}{\mu})-\mu^2f(\frac{1}{\mu})+\mu^2\int\rhopo f\bigl(\frac{\rho}{\rhopo}\bigr)\dd\theta
\geq  D_f[\rho \Vert \rhopo].   \nonumber
\end{align}
In the last inequality, we used the fact that $0=f(1) \geq f(\frac{1}{\mu})-(\frac{1}{\mu}-1)f'(\frac{1}{\mu})$ by the convexity of $f$ and that $\mu \geq 1$. This completes the proof of \eqref{eq:generalized-gradient-dominant-condition-1}.

To show \eqref{eq:generalized-gradient-dominant-condition-2}, we introduce another local inequality with its proof is in \Cref{proof:dual-ineq-ns-cond}:
\begin{lemma}
\label{lem:generalized-f''(1)}
    Given a convex function $f\in C^2(0,+\infty)$ with $f(1) = 0$ and $f''(1) > 0$,
     there exists $0 < \delta_f < \frac{1}{4}$, such that
    \begin{equation}
\label{eq:1-delta-neighbor-2}
    \frac{f''(1)}{2} \leq f''(x), \quad \text{for any } x \in (\frac{1}{1+2\delta_f}, \frac{1}{1-2\delta_f}).
\end{equation}
Moreover, there exists $\alpha_{\chi^2,f}' > 0$, such that
    \begin{equation}
        \label{eq:generalized-f''(1)}
\bigl(f'(\frac{1}{\mu}) -  f'(\frac{1}{x})\bigr)(x^2 - \mu^2)  \geq \alpha_{\chi^2,f}' (x - \mu)^2
\end{equation}
for any $x > 0$ and $\mu \in [1, 1+\delta_f]$.
\end{lemma}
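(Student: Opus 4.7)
The plan is to handle the two parts in sequence, with part (2) being the substantive one. Part (1) follows from continuity: since $f''(1) > 0$ and $f \in C^2$, some open interval around $1$ has $f''(x) \geq f''(1)/2$, and one simply picks $\delta_f \in (0, 1/4)$ small enough that $(\frac{1}{1+2\delta_f}, \frac{1}{1-2\delta_f})$ sits inside that interval. Observe also that $\mu \in [1, 1+\delta_f]$ forces $1/\mu \in [1/(1+\delta_f), 1]$, which lies comfortably inside this good neighborhood; this will be used repeatedly in part (2).

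For part (2), my strategy is a case analysis on $x$ based on whether $1/x$ lies inside the good neighborhood. In the \emph{close regime} $x \in (1-2\delta_f,\, 1+2\delta_f)$, both $1/x$ and $1/\mu$ lie inside the good neighborhood, so the fundamental theorem of calculus together with $f''(s) \geq f''(1)/2$ on the interval of integration yields
\[
|f'(1/\mu) - f'(1/x)| \geq \frac{f''(1)}{2}\left|\frac{1}{\mu} - \frac{1}{x}\right| = \frac{f''(1)}{2}\frac{|x-\mu|}{x\mu}.
\]
Since $f'$ is non-decreasing and $x \mapsto 1/x$ is order-reversing, $(f'(1/\mu) - f'(1/x))$ and $(x-\mu)$ share the same sign. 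Factoring $x^2-\mu^2 = (x-\mu)(x+\mu)$ and noting that $(x+\mu)/(x\mu) = 1/x + 1/\mu$ is uniformly bounded below on this range then delivers the desired quadratic lower bound.

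In the \emph{far regime} $x \in (0,\, 1-2\delta_f] \cup [1+2\delta_f,\, \infty)$, we have $|x - \mu| \geq \delta_f$ and, crucially, the elementary inequality $(x+\mu) \geq |x-\mu|$ valid for $x, \mu > 0$ (equivalent to $x\mu \geq 0$). This reduces the task to a uniform positive lower bound on $|f'(1/\mu) - f'(1/x)|$ independent of $x$ and $\mu \in [1, 1+\delta_f]$. For $x \leq 1-2\delta_f$, I would use $1/x \geq 1/(1-2\delta_f) > 1 \geq 1/\mu$ together with the part (1) bound on $f''$ restricted to $[1, 1/(1-2\delta_f)]$ to obtain $f'(1/x) - f'(1/\mu) \geq f'(1/(1-2\delta_f)) - f'(1) > 0$; the case $x \geq 1+2\delta_f$ is analogous, comparing $f'(1/(1+\delta_f))$ with $f'(1/(1+2\delta_f))$. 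Taking $\alpha_f'$ to be the minimum of the three positive constants produced across the two regimes finishes the proof.

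The only subtlety is the far regime when $x$ is very large or very small: then the integration interval $[\min(1/x,1/\mu),\, \max(1/x,1/\mu)]$ is not contained in the good neighborhood, so the pointwise bound $f''(s) \geq f''(1)/2$ cannot be applied across all of it. The fix is not to try to estimate the full integral of $f''$, but instead to restrict the integration to a fixed sub-interval inside the good neighborhood near $1$, which still yields a uniform positive lower bound on $|f'(1/\mu) - f'(1/x)|$ no matter how extreme $x$ becomes. Everything else is a routine case-splitting argument.
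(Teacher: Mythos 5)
Your proposal is correct and follows essentially the same route as the paper: part (1) by continuity of $f''$, and part (2) by splitting into a close regime where the full interval of integration lies in the good neighborhood and a far regime where one restricts the integral of $f''$ to a fixed sub-interval near $1$ to get a uniform positive lower bound on $|f'(1/\mu)-f'(1/x)|$. The paper packages the same three cases as the single bound $f'(1/x)-f'(1/y)\geq \alpha_f'\,\frac{y-x}{y+x}$, but the normalization by $x+\mu$ (your inequality $x+\mu\geq|x-\mu|$) plays the identical role, so the two arguments coincide in substance.
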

We prove the dual gradient dominance condition \eqref{eq:generalized-gradient-dominant-condition-2} separately for two different scenarios, depending on whether $\mu$ is greater than $1+\delta_f$ or not.

When $\mu \leq 1+\delta_f$, taking $x =  \rhopo / \rho$ in \Cref{lem:generalized-f''(1)} leads to
\begin{equation}
\label{eq:generalized-gradient-dominant-condition-2a0}
\bigl(f'(\frac{1}{\mu}) -  f'(\frac{\rho}{\rhopo})\bigr)\bigl(\frac{\rhopo^2}{\rho^2} - \mu^2\bigr)  \geq \alpha_{\chi^2,f}' \bigl(\frac{\rhopo}{\rho} - \mu\bigr)^2.
\end{equation}
Multiplying the local inequality~\eqref{eq:generalized-gradient-dominant-condition-2a0} by $\rho$ and integrating over $\R^{d}$ leads to
\begin{equation}
\label{eq:generalized-gradient-dominant-condition-2a}
    \begin{split}
        &\int \Bigl(f'\bigl(\frac{1}{\mu}\bigr)-f'\bigl(\frac{\rho}{\rhopo}\bigr)\Bigr)\bigl(\frac{\rhopo^2}{\rho^2}-\mu^2\bigr)\rho\dd\theta
        \geq\alpha_{\chi^2,f}'\int  (\frac{\rhopo}{\rho} - \mu)^2 \rho \dd \theta\\
=&\alpha_{\chi^2,f}'\int \frac{\rhopo^2}{\rho} + \rho\mu^2 - 2 \rhopo \mu \dd \theta = 2\alpha_{\chi^2,f}'\mu(\mu-1) \quad (\textrm{using } \mu \geq  1)\\
\geq&\alpha_{\chi^2,f}'(\mu+1)(\mu-1) = \alpha_{\chi^2,f}'\chi^2[\rhopo \Vert  \rho].\\
    \end{split}
\end{equation}
When $\mu \geq 1+\delta_f$, utilizing \eqref{eq:generalized-gdc-1-intg}
yields
\begin{equation}
\begin{split}
 \label{eq:generalized-gradient-dominant-condition-2b0}
&\int \Bigl(f'\bigl(\frac{1}{\mu}\bigr)-f'\bigl(\frac{\rho}{\rhopo}\bigr)\Bigr)\bigl(\frac{\rhopo^2}{\rho^2}-\mu^2\bigr)\rho\dd\theta \\
\geq&\mu(1-\mu)f'(\frac{1}{\mu})-\mu^2f(\frac{1}{\mu})+\mu^2\int\rhopo f\Bigl(\frac{\rho}{\rhopo}\Bigr)\dd\theta  \\
\geq & \mu(1-\mu)f'(\frac{1}{\mu})-\mu^2f(\frac{1}{\mu}) .
\end{split}
\end{equation}
where in the last inequality, we used the fact that the $f$-divergence is non-negative.
Let us denote $h(x) = (x-1)f'(x) - f(x)$. Then for $x\in(0,1]$, $h(x)$ is nonnegative and non-increasing, since $h'(x) = (x-1)f''(x) \leq 0$ and $h(1)=0$. Therefore, the inequality~\eqref{eq:generalized-gradient-dominant-condition-2b0} can be further carried on to obtain
\begin{equation}
\begin{split}
\label{eq:generalized-gradient-dominant-condition-2b}
&\int \Bigl(f'\bigl(\frac{1}{\mu}\bigr)-f'\bigl(\frac{\rho}{\rhopo}\bigr)\Bigr)\Bigl(\frac{\rhopo^2}{\rho^2}-\mu^2\Bigr)\rho\dd\theta
\\
\geq & \mu(1-\mu)f'(\frac{1}{\mu})-\mu^2f(\frac{1}{\mu}) = \mu^2 h(\frac{1}{\mu})  \quad (\mu \geq 1+\delta_f
\textrm{ and } h \textrm{ non-increasing}, h\geq 0)
\\
\geq&  h\bigl(\frac{1}{1 + \delta_f}\bigr)(\mu^2-1) = h(\frac{1}{1 + \delta_f})\chi^2[\rhopo \Vert  \rho] .
\end{split}
\end{equation}
According to the definition of $\delta_f$ in \eqref{eq:generalized-f''(1)}, we have
\begin{equation}
   h\bigl(\frac{1}{1 + \delta_f}\bigr)
   = \int^{1}_{\frac{1}{1+\delta_f}} f''(x)(1-x) \dd x
   \geq \frac{f''(1)}{2}\int^{1}_{\frac{1}{1+\delta_f}} (1-x) \dd x = \frac{\delta_f^2 f''(1)}{4(1+\delta_f)^2} .
\end{equation}
In summary, combining the two scenarios, namely $\mu \leq 1+\delta_f$ in \eqref{eq:generalized-gradient-dominant-condition-2a} and $\mu \geq 1+\delta_f$ in \eqref{eq:generalized-gradient-dominant-condition-2b}, we conclude that \eqref{eq:generalized-gradient-dominant-condition-2} holds for
$$\alpha_{\chi^2,f} = \min\Bigl\{\alpha_{\chi^2,f}', \frac{\delta_f^2 f''(1)}{4(1+\delta_f)^2}  \Bigr\} > 0.$$
This completes the proof.
\end{proof}

\subsection{Tighter results using the $*$-conjugate $f$-divergence as the dual}
In the last subsection, we obtain general convergence results by choosing $\bar{f}$ as the reverse $\chi^2$ divergence. The constant $\alpha_f$ depends on $f$ only. However, it depends on some implicit constants such as $\delta_f, \alpha_{\chi^2,f}$ that are dependent on $f$. The goal of this subsection is to improve the estimate for specific cases and obtain more explicit constants.

We take $\dualf(x) = x f\bigl(\frac{1}{x}\bigr)$, known as as the $*$-conjugate function of $f$ \cite{osterreicher2002csiszar}. For this choice, the $\bar{f}$-divergence has the following dual form
\begin{equation}
\label{eq:g-divergence}
D_{\dualf}[\rho \Vert  \rhopo]
=\int\rhopo \dualf\Bigl(\frac{ \rho}{ \rhopo}\Bigr)\,\dd\theta
=\int\rho f\Bigl(\frac{ \rhopo}{ \rho}\Bigr)\,\dd\theta.
\end{equation}
We note that $\dualf(x) = x f\bigl(\frac{1}{x}\bigr)$ is also convex with $\dualf(1) = 0$.

Along the Fisher-Rao gradient flow~\eqref{eq:FR-gf-f} of $D_f$, we have
\begin{equation*}
\begin{aligned}
\frac{\dd}{\dd t} D_\dualf[\rho_t \Vert \rhopo]
&= \int \partial_t\rho_t\Bigl(f(\frac{\rhopo}{\rho_t}) -  \frac{\rhopo} {\rho_t}f'(\frac{\rhopo}{\rho_t})\Bigr) \dd\theta \\
&= \int \Bigl(-\rho_t f'(\frac{\rho_t}{\rhopo}) + \rho_t \E_{\rho_t}\bigl[ f'(\frac{\rho_t}{\rhopo}) \bigr] \Bigr) \Bigl(f(\frac{\rhopo}{\rho_t}) -  \frac{\rhopo}{\rho_t}f'(\frac{\rhopo}{\rho_t})\Bigr) \dd\theta.
\end{aligned}
\end{equation*}
The following theorem shows that the dual gradient dominance condition holds with an explicit constant when $f''$ is a polynomial.
\begin{theorem}
\label{theorem:dual-ineq}
Given any convex function $f$ specified by $f''(x) = x^{p}$ with $f(1) = 0$, when $p \leq -2$ or $-1 \leq p$, we have
\begin{equation}
\label{eq:dual-ineq-1}
\begin{aligned}
-\int \Bigl(-\rho f'(\frac{\rho}{\rhopo}) + \rho \E_{\rho}\bigl[ f'(\frac{\rho}{\rhopo}) \bigr] \Bigr) \Bigl(f(\frac{\rhopo}{\rho}) -  \frac{\rhopo}{\rho}f'(\frac{\rhopo}{\rho})\Bigr) \dd\theta \geq D_\dualf[\rho \Vert \rhopo] + D_f[\rho \Vert \rhopo],
\end{aligned}
\end{equation}
and when $-2 < p < -1$, we have 
\begin{equation}
\label{eq:dual-ineq-2}
\begin{aligned}
-\int \Bigl(-\rho f'(\frac{\rho}{\rhopo}) + \rho \E_{\rho}\bigl[ f'(\frac{\rho}{\rhopo}) \bigr] \Bigr) \Bigl(f(\frac{\rhopo}{\rho}) -  \frac{\rhopo}{\rho}f'(\frac{\rhopo}{\rho})\Bigr) \dd\theta \geq \frac{1}{2}\Bigl(D_\dualf[\rho \Vert \rhopo]+ D_f[\rho \Vert \rhopo]\Bigr).
\end{aligned}
\end{equation}
{The above inequalities hold for any positive densities $\rho$ and $\rhopo$, provided that $f'(\frac{\rho}{\rhopo})$ is absolutely integrable with respect to $\rho$.}
\end{theorem}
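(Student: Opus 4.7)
The plan is to rewrite all three quantities in the theorem in terms of two scalar moments of $\rho/\rhopo$ and then finish with Jensen's inequality plus an elementary algebraic bound. Starting from the symmetric double-integral representation already obtained in \eqref{eq:dtDg},
$$
\text{LHS} = \frac{1}{2}\iint \rho(\theta)\rho(\theta')\bigl(f'(x) - f'(y)\bigr)\bigl(\dualf'(x) - \dualf'(y)\bigr)\,\dd\theta\,\dd\theta',
$$
with $x = \rho(\theta)/\rhopo(\theta)$ and $y = \rho(\theta')/\rhopo(\theta')$. Differentiating $\dualf(x) = xf(1/x)$ gives $\dualf''(x) = x^{-3}f''(1/x)$, so $f''(x) = x^p$ yields $\dualf''(x) = x^{-p-3}$.

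Next, assume $p \notin \{-1,-2\}$ and normalize $f(1) = f'(1) = 0$, which is admissible since adding a linear function to $f$ leaves $D_f$ unchanged. Then
$$
f'(x) - f'(y) = \frac{x^{p+1} - y^{p+1}}{p+1},\qquad \dualf'(x) - \dualf'(y) = \frac{y^{-p-2} - x^{-p-2}}{p+2}.
$$
Expanding the product yields four terms; the cross terms $x^{-1}$ and $y^{-1}$ integrate (against $\rho\rho'$) to $1$ via the identity $\int \rho/x\,\dd\theta = \int \rhopo\,\dd\theta = 1$, while the two remaining terms $x^{p+1}y^{-p-2}$ and $y^{p+1}x^{-p-2}$ factor as products of separate $\theta,\theta'$ integrals. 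Introducing
$$
A_1 := \E_{\rhopo}\bigl[(\rho/\rhopo)^{p+2}\bigr] - 1,\qquad A_2 := \E_{\rhopo}\bigl[(\rho/\rhopo)^{-p-1}\bigr] - 1,
$$
the double integral collapses to $\text{LHS} = (A_1 + A_2 + A_1 A_2)/((p+1)(p+2))$. A parallel direct computation starting from the closed forms $f(x) = (x^{p+2} - 1 - (p+2)(x-1))/((p+1)(p+2))$ and $\dualf(x) = xf(1/x)$, again using $\int\rho = \int\rhopo = 1$ to kill the linear-in-$x$ piece, gives $D_f = A_1/((p+1)(p+2))$ and $D_\dualf = A_2/((p+1)(p+2))$, so
$$
\text{LHS} - (D_f + D_\dualf) = \frac{A_1 A_2}{(p+1)(p+2)}.
$$

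Both $t\mapsto t^{p+2}$ and $t\mapsto t^{-p-1}$ are convex (resp.\ concave) on $(0,\infty)$ exactly when $(p+1)(p+2) > 0$ (resp.\ $<0$), so Jensen's inequality against the probability measure $\rhopo$, together with $\E_{\rhopo}[\rho/\rhopo] = 1$, gives $\operatorname{sign}(A_i) = \operatorname{sign}((p+1)(p+2))$; the strict positivity $A_i + 1 > 0$ (an integral of a positive function) yields $|A_i| < 1$ in the concave case. For $p \leq -2$ or $p \geq -1$ we have $(p+1)(p+2) > 0$ and $A_1, A_2 \geq 0$, so $A_1 A_2 \geq 0$ and \eqref{eq:dual-ineq-1} is immediate. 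For $-2 < p < -1$, writing $|A_i| \in [0,1)$, the claim \eqref{eq:dual-ineq-2} reduces to
$$
2\,\text{LHS} - (D_f + D_\dualf) = \frac{|A_1| + |A_2| - 2|A_1||A_2|}{|(p+1)(p+2)|} \geq 0,
$$
which holds because $2|A_1||A_2| \leq |A_1| + |A_2|$ whenever each $|A_i| \leq 1$.

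The main obstacle, and really the heart of the argument, is the compact decomposition $\text{LHS} = (A_1 + A_2 + A_1 A_2)/((p+1)(p+2))$: one must carefully exploit both mass-conservation constraints $\int\rho = \int\rhopo = 1$ to convert all four bilinear terms produced by the expansion into factorizable single integrals. The boundary values $p = -1$ (KL) and $p = -2$ (reverse KL) are handled by continuity, using the relations $A_i = (p+1)(p+2)D_{f/\dualf}$ to resolve the apparent $0/0$; in both limits the inequality saturates to equality $\text{LHS} = D_f + D_\dualf$.
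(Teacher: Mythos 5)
Your proof is correct and follows essentially the same route as the paper: after the linear normalization of $f$, both arguments reduce the left-hand side to $\bigl(B_1B_2-1\bigr)/\bigl((p+1)(p+2)\bigr)$ with $B_i=A_i+1$, apply Jensen's inequality to fix the signs of the $A_i$, and use the identical elementary bound (your $2|A_1||A_2|\le |A_1|+|A_2|$ is the paper's $B_1B_2\le\tfrac12(B_1+B_2)$ in disguise) for $-2<p<-1$. The only divergence is at $p=-1,-2$, where the paper computes directly with $f=x\log x$ and $f=-\log x$ instead of invoking continuity in $p$; your limit argument would need justification for interchanging limits and integrals, but the direct endpoint computation fits your framework verbatim, so this is a presentational rather than substantive gap.
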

\begin{newremark}
    As a consequence of \Cref{theorem:dual-ineq}, we get a convergence result for the Fisher-Rao gradient flow associated with the KL-divergence. More precisely, for such gradient flow, if we choose an initial
density $\rho_0$ satisfying that ${\rm KL}[\rho_0 \Vert  \rhopo] + {\rm KL}[\rhopo\Vert\rho_0] < \infty$, then the Fisher-Rao gradient flow (assuming it is well-posed) converges exponentially fast with
\begin{align*}
{\rm KL}[\rho_t \Vert  \rhopo] + {\rm KL}[\rhopo \Vert  \rho_t] \leq e^{-t}  \Bigl({\rm KL}[\rho_0 \Vert  \rhopo] + {\rm KL}[\rhopo \Vert  \rho_0]\Bigr).
\end{align*}
\end{newremark}

\begin{proof}[Proof of \Cref{theorem:dual-ineq}]
First, we observe that both sides of these dual gradient dominance inequalities \eqref{eq:dual-ineq-1} and \eqref{eq:dual-ineq-2} remain the same under the transformation $f(x) \rightarrow f(x) + c(x - 1)$ for any $c$. Therefore, for each $\rho$, it suffices to prove that \eqref{eq:dual-ineq-1} and \eqref{eq:dual-ineq-2} hold for one representative element under the transformation.

When $p = -1$, we consider $f(x) = x\log x$; the corresponding $f$-divergence is the KL-divergence. We have the following
\begin{equation}
\begin{aligned}
&-\int \Bigl(-\rho f'(\frac{\rho}{\rhopo}) + \rho \E_{\rho}\bigl[ f'(\frac{\rho}{\rhopo}) \bigr] \Bigr) \Bigl(f(\frac{\rhopo}{\rho}) -  \frac{\rhopo}{\rho}f'(\frac{\rhopo}{\rho})\Bigr) \dd\theta
\\
=& \int \Bigl(-\rho \log(\frac{\rho}{\rhopo}) + \rho \E_{\rho}\bigl[ \log(\frac{\rho}{\rhopo}) \bigr] \Bigr) \frac{\rhopo}{\rho} \dd\theta
\\
=& \E_{\rhopo}\bigl[ \log(\frac{\rhopo}{\rho}) \bigr] + \E_{\rho}\bigl[ \log(\frac{\rho}{\rhopo}) \bigr] = D_\dualf[\rho \Vert \rhopo] + D_f[\rho \Vert \rhopo].
\end{aligned}
\end{equation}
Hence, \eqref{eq:dual-ineq-1} holds.

When $p = -2$, we consider $f(x) = -\log x$; the corresponding $f$-divergence is the reverse KL-divergence. We have
\begin{equation}
\begin{aligned}
&-\int \Bigl(-\rho f'(\frac{\rho}{\rhopo}) + \rho \E_{\rho}\bigl[ f'(\frac{\rho}{\rhopo}) \bigr] \Bigr) \Bigl(f(\frac{\rhopo}{\rho}) -  \frac{\rhopo}{\rho}f'(\frac{\rhopo}{\rho})\Bigr) \dd\theta
\\
=& -\int \bigl(\rhopo - \rho  \bigr)  \bigl(1 - \log\frac{\rhopo}{\rho} \bigr)\dd\theta
\\
=& \E_{\rhopo}\bigl[ \log(\frac{\rhopo}{\rho}) \bigr] - \E_{\rho}\bigl[ \log(\frac{\rhopo}{\rho}) \bigr] = D_\dualf[\rho \Vert \rhopo] + D_f[\rho \Vert \rhopo].
\end{aligned}
\end{equation}
Hence, \eqref{eq:dual-ineq-1} holds.

When $p \neq -1$ and $p \neq -2$, we consider $f(x) = \frac{x^{p+2} - 1}{(p+2)(p+1)}$. The corresponding $f$-divergences include $\chi^2$ divergence (when $p=0$). We have
\begin{equation}
\begin{aligned}
&-\int \Bigl(-\rho f'(\frac{\rho}{\rhopo}) + \rho \E_{\rho}\bigl[ f'(\frac{\rho}{\rhopo}) \bigr] \Bigr) \Bigl(f(\frac{\rhopo}{\rho}) -  \frac{\rhopo}{\rho}f'(\frac{\rhopo}{\rho})\Bigr) \dd\theta
\\
=& \frac{1}{(p+1)(p+2)}\int \Bigl(-\frac{\rho^{p+2}}{\rhopo^{p+1}}  +\rho \int \frac{\rho^{p+2}}{\rhopo^{p+1}} \dd\theta \Bigr)  \Bigl(\frac{1}{p+1} + \bigl(\frac{\rhopo}{\rho}\bigr)^{p+2}\Bigr)\dd\theta
\\
=& \frac{1}{(p+2)(p+1)} \Bigl(\int  \frac{\rho^{p+2}}{\rhopo^{p+1}} \dd\theta \int \frac{\rhopo^{p+2}}{\rho^{p+1}}  \dd\theta   - 1\Bigr).
\end{aligned}
\end{equation}
When $p<-2$ or $-1<p$, applying Jensen's inequality to the convex function $x^{p+2} ~(x > 0)$, we have
\begin{align*}
   \int  \frac{\rho^{p+2}}{\rhopo^{p+1}} \dd\theta = \E_{\rhopo}\Bigl[\bigl(\frac{\rho}{\rhopo}\bigr)^{p+2}\Bigr] \geq 1 \quad \textrm{ and } \quad \int  \frac{\rhopo^{p+2}}{\rho^{p+1}} \dd\theta = \E_{\rho}\Bigl[\bigl(\frac{\rhopo}{\rho}\bigr)^{p+2}\Bigr] \geq 1.
\end{align*}
Therefore, \eqref{eq:dual-ineq-1} holds using the following estimate:
\begin{equation}
\begin{aligned}
\frac{1}{(p+2)(p+1)} &\Bigl( \int  \frac{\rho^{p+2}}{\rhopo^{p+1}} \dd\theta \int \frac{\rhopo^{p+2}}{\rho^{p+1}}  \dd\theta  - 1\Bigr)
\\
&\geq \frac{1}{(p+2)(p+1)} \Bigl(  \int  \frac{\rho^{p+2}}{\rhopo^{p+1}} \dd\theta   - 1+ \int \frac{\rhopo^{p+2}}{\rho^{p+1}}  \dd\theta - 1\Bigr)
\\
&= D_\dualf[\rho \Vert \rhopo] + D_f[\rho \Vert \rhopo].
\end{aligned}
\end{equation}
When $-2 < p < -1$, applying Jensen's inequality to the concave function $x^{p+2} ~(x > 0)$, we have
$$\int  \frac{\rho^{p+2}}{\rhopo^{p+1}} \dd\theta  = \E_{\rhopo}\Bigl[\bigl(\frac{\rho}{\rhopo}\bigr)^{p+2}\Bigr] \leq 1 \quad \textrm{ and } \quad \int  \frac{\rhopo^{p+2}}{\rho^{p+1}} \dd\theta  = \E_{\rho}\Bigl[\bigl(\frac{\rhopo}{\rho}\bigr)^{p+2}\Bigr] \leq 1. $$ Note that $(p+1)(p+2) < 0$. We get \eqref{eq:dual-ineq-2} using the following inequality
\begin{equation}
\begin{aligned}
\frac{1}{(p+2)(p+1)} &\Bigl( \int  \frac{\rho^{p+2}}{\rhopo^{p+1}} \dd\theta \int \frac{\rhopo^{p+2}}{\rho^{p+1}}  \dd\theta  - 1\Bigr)
\\
&\geq \frac{1}{(p+2)(p+1)} \Bigl(  \frac{1}{2}\int  \frac{\rho^{p+2}}{\rhopo^{p+1}} \dd\theta  + \frac{1}{2}\int \frac{\rhopo^{p+2}}{\rho^{p+1}}  \dd\theta - 1\Bigr)
\\
&= \frac{1}{2}\Bigl(D_\dualf[\rho \Vert \rhopo] + D_f[\rho \Vert \rhopo]\Bigr).
\end{aligned}
\end{equation}
This concludes the proof.
\end{proof}

\section{Conclusion}
\label{sec-Conclusion}
In this study, we have investigated the geodesic convexity and functional inequalities related to Fisher-Rao gradient flows associated with various $f$-divergences.
We have constructed counterexamples to demonstrate that, in the Fisher-Rao geometry, $f$-divergences generally lack geodesic convexity or fail to satisfy the gradient dominance condition. A notable example is the KL divergence, which is widely used for sampling applications. Nevertheless, we have also developed sufficient conditions on $f$ under which $f$-divergences possess these properties.

Furthermore, we have introduced the dual gradient dominance condition and demonstrated its validity for a broad class of $f$-divergences, notably including the KL divergence. The constants in this condition depend solely on $f$, independent of the target distribution $\rhopo$. This characteristic sharply contrasts with the log-Sobolev inequality developed in the Wasserstein geometry, where constants depend on the target distribution.

As a direct consequence, assuming the well-posedness of the Fisher-Rao gradient flow, our results indicate that the flow exhibits exponential convergence, with a convergence rate independent of the target density.

Future research directions include establishing the well-posedness of the Fisher-Rao gradient flows and exploring the properties of more general, including non-smooth, energy functionals within the Fisher-Rao geometry as well as the related Wasserstein-Fisher-Rao geometry. Designing efficient numerical approximations for these Fisher-Rao gradient flows, particularly for sampling applications as discussed in \cite{lu2019accelerating,lu2022birth,huang2022efficient,lindsey2022ensemble,chen2024efficient,maurais2024sampling}, remains a promising and open area of research.

\appendix

\section{Lemmas for Proof of \Cref{theorem:FR-gradient-dominant-condition-sn}}
\label{proof:FR-gradient-dominant-condition-sn}
In this section, we prove \Cref{lemma-localization-of-density}, \Cref{cond-b-GDC}, and \Cref{lem-2-point-to-3-point},   which are needed to prove \Cref{theorem:FR-gradient-dominant-condition-sn}.

\begin{proof}[Proof of \Cref{lemma-localization-of-density}]
    We prove that there is a sequence of bounded domains $\{\Omega_n\}$ with $\cup_{n=1}^{\infty} \Omega_n = \R^d$, such that $\int_{\Omega_n}\rho \dd\theta = \int_{\Omega_n}\rhopo \dd\theta$. 
    
    Due to the smoothness of $\rho$ and $\rhopo$, when they are not the same, there exist $\theta_a$, $\theta_b$, $\delta > 0$ and $\epsilon > 0$, such that
\begin{align*}
 \rho(\theta) - \rhopo(\theta) > \epsilon, \qquad \forall \ \theta \in B_{\delta}(\theta_a),
 \\
 \rhopo(\theta) - \rho(\theta) > \epsilon, \qquad \forall\  \theta \in B_{\delta}(\theta_b),
\end{align*}
where $B_r(\theta)$ is the ball centered at $\theta$ with radius $r$. Let $|B_r(\theta)|$ denote the volume of the ball.

We first construct the bounded domain 
\begin{equation}
    \Omega_n = B_{R_n}(0) - \bigl(B_{\frac{1}{n}}(\theta_a) \cup B_{\frac{1}{n}}(\theta_b)\bigr)
\end{equation}
where $n > \frac{1}{\delta}$ and $R_n > n$ is large enough such that $B_{R_n}(0)$ contains both $B_{\delta}(\theta_a)$ and $B_{\delta}(\theta_b)$, $\int_{B_{R_n}^c(0)}\rho \dd\theta < \frac{|B_{1/n}(0)|\epsilon}{2}$, and $\int_{B_{R_n}^c(0)}\rhopo \dd\theta < \frac{|B_{1/n}(0)|\epsilon}{2}$. Without loss of generality, we assume $\int_{\Omega_n} \rhopo \dd\theta > \int_{\Omega_n} \rho \dd\theta$, then we define
\begin{equation}
    \Omega_n(r) = B_{R_n}(0) - \bigl(B_{r}(\theta_a) \cup B_{1/n}(\theta_b)\bigr)
\end{equation}
When $r = 0$, we have 
\begin{equation}
\begin{aligned}
    \int_{{\Omega}_n(r)} \rhopo \dd\theta &= 1-\int_{B_{R_n}(0)^c} \rhopo \dd\theta - \int_{B_{1/n}(\theta_b)} \rhopo \dd\theta\\
    & \leq 1 - \int_{B_{1/n}(\theta_b)} (\rho+\epsilon) \dd\theta \\
    & = 1-\int_{B_{1/n}(\theta_b)} \rho \dd\theta - \epsilon |B_{1/n}(0)|\\
    & < 1-\int_{B_{1/n}(\theta_b)} \rho \dd\theta - \int_{B_{R_n}(0)^c} \rho \dd\theta = \int_{{\Omega}_n(r)} \rho \dd\theta.
\end{aligned}
\end{equation}
Therefore, by continuity, there exists some $r_n \in (0, 1/n)$, such that  
$\int_{{\Omega}_n(r_n)} \rhopo \dd\theta = \int_{{\Omega}_n(r_n)} \rho \dd\theta$.
Using ${\Omega}_n(r_n)$ to replace $\Omega_n$ completes the proof. We can also make $\Omega_n$ monotonically increasing by using the union $\cup_{k=1}^n \Omega_k$ to replace $\Omega_n$.
\end{proof}

\bigskip
\begin{proof}[Proof of \Cref{cond-b-GDC}]
Assume that \Cref{cond:n=2-GDC} holds with $\alpha > 0$, namely
\begin{equation}
\label{appendix-A-7}
 \bigl(f'(y) - f'(x)\bigr)^2  \geq  \alpha(\frac{1}{x} - \frac{1}{y})\bigl( \frac{f(x)}{1 - x} - \frac{f(y)}{1 - y}\bigr)
\end{equation}
holds for any $0 < x < 1 < y$.
Let $y$ in \eqref{appendix-A-7} approach $1$, we get
\begin{align}
\label{eq:Df-n=2-necc-1}
  f'(x)^2  \geq  \alpha \frac{f(x)}{x}  \quad \textrm{ for } \quad x > 0 .
\end{align}
Here we used the fact that $\lim_{x\rightarrow 1} \frac{f(x)}{1 - x} = \lim_{x\rightarrow 1} f'(x) = 0$.
Now, letting $x$ further approach $1$ leads to that
\begin{align}
\label{eq:Df-n=2-necc-2}
  f''(1)  \geq  \frac{\alpha}{2} .
\end{align}
Here we used the fact that $\lim_{x\rightarrow 1} \frac{xf'(x)^2}{f(x)} = \lim_{x\rightarrow 1} 2xf''(x) + f'(x) = 2f''(1)$.
Since $f''(1)>0$ and $f\in C^2$, there exists $0< \delta' < \frac{1}{2}$, such that when $1 - 2\delta' \leq x\leq 1+2\delta'$,
\begin{equation}
\label{eq:GDC-delta}
    \frac{f''(1)}{2} \leq f''(x) \leq 2f''(1).
\end{equation}
Moreover, $f'(1)=0$ implies that there exists $0 < \delta < \delta'$, such at
\begin{equation}
\label{eq:GDC-delta-2}
    -f'(1-\delta) \leq \frac{\delta'}{2}f''(1). 
\end{equation}
Also, the convexity of $f$ implies that $f'(x)$ is non-decreasing and $\frac{f(x)}{1 - x}$ is non-increasing.
\Cref{cond-b-GDC} requires that \begin{align}
\label{eq:Df-n=2}
   \exists \alpha_\delta > 0 \quad \textrm{s.t.} \quad
 \bigl(f'(y) - f'(x)\bigr)^2  \geq  \alpha_\delta(\frac{1}{x} - \frac{1}{y})\bigl(\frac{f(x)}{1 - x} - \frac{f(y)}{1 - y}\bigr)  
\end{align}
holds for all $x \in (1-\delta, 1+\delta)$ and $y > 0$. Instead we consider a relaxed setting where $x,y > 0$, and $\{x,y\}\cap(1-\delta, 1+\delta)\neq \emptyset$. Due to the symmetry of \cref{eq:Df-n=2} with respect to $x$ and $y$, we assume without loss of generality that $x<y$ (when $x=y$, \cref{eq:Df-n=2} holds with $0\geq 0$). Moreover, by \cref{cond:n=2-GDC}, it suffices to consider the case where $1\not\in[x,y]$. In the following,  we will prove that \cref{eq:Df-n=2}
holds under conditions $x< y$,  $\{x,y\}\cap(1-\delta,1+\delta)\neq \emptyset$, and $1\not\in[x,y]$. We will then use the fact that 
$0\leq\delta\leq\delta'$ to divide the analysis into three cases:
\begin{enumerate}[leftmargin=4em,label=\textbf{Case \arabic*}]
    \item \label{enum-lem4.5-C.1} {\hspace{-0.5em}\bf .} $1-2\delta' < x < y < 1 + 2\delta'$,
    \item \label{enum-lem4.5-C.2}{\hspace{-0.5em}\bf .} $1 < x <1+ \delta'  < 1+2\delta' < y$,
    \item \label{enum-lem4.5-C.3}{\hspace{-0.5em}\bf .} $0<x <1 - 2\delta'  < 1-\delta < y < 1$.
\end{enumerate}

For \ref{enum-lem4.5-C.1},  \eqref{eq:Df-n=2} can be obtained with $\alpha_{\delta_1} = \frac{f''(1)(1-2\delta')^2}{4}$ from multiplying the following two inequalities
\begin{align}
  f'(y) - f'(x)
  &=   \int_x^y f''(\xi) \dd\xi
  \geq \frac{f''(1)}{2}(y-x) \quad (\textrm{using \eqref{eq:GDC-delta}}) \label{eq:case-1.1}
  \\
  &\geq \frac{f''(1)(1-2\delta')^2}{2}\bigl(\frac{1}{x} - \frac{1}{y}\bigr), \nonumber
  \\
   \frac{f(x)}{1 - x} - \frac{f(y)}{1 - y}
   &= \int_x^y \frac{-f(\xi) + f'(\xi)(\xi-1)}{(1-\xi)^2}\dd\xi = \int_x^y \frac{\int_1^\xi f''(z)(z-1)\dd z}{(1-\xi)^2}\dd\xi \nonumber\\
   &\leq \int_x^y \frac{2f''(1)\bigl|\int_1^\xi (z-1)\dd z\bigr|}{(1-\xi)^2}\dd\xi \quad (\textrm{using \eqref{eq:GDC-delta}})
   \\
   &=  f''(1)(y-x) \nonumber
   \\
   &\leq 2\bigl(f'(y) - f'(x)\bigr) .\nonumber
\end{align}
Here in the last inequality, we used \eqref{eq:case-1.1}.

For \ref{enum-lem4.5-C.2},  \eqref{eq:Df-n=2} can be obtained with $\alpha_{\delta_2} = \frac{\delta' f''(1)}{6}$ from multiplying the following two inequalities together with \eqref{eq:GDC-delta} leading to
\begin{align}
  f'(y) - f'(x)
&=   \int_x^y f''(\xi) {\rm d}\xi \geq \int_{1+\delta'}^{1 + 2\delta'}f''(\xi) \dd\xi \geq \frac{\delta' f''(1)}{2}\geq  \frac{\delta'  f''(1)}{2} \bigl(\frac{1}{x} - \frac{1}{y}\bigr),\label{eq:case-2.1}
  \\
  \frac{f(x)}{1 - x} - \frac{f(y)}{1 - y} &= \int_1^y f''(\xi)\frac{y-\xi}{y-1}{\rm d}\xi - \int_1^x f''(\xi)\frac{x-\xi}{x-1}{\rm d}\xi
  \quad (\textrm{using $f(1)=f'(1)=0$})\nonumber\\
  &= \frac{y-x}{(y-1)(x-1)}\int_1^x f''(\xi)(\xi-1){\rm d}\xi + \int_x^y f''(\xi)\frac{y-\xi}{y - 1}{\rm d}\xi \\
   &\leq \frac{1}{x-1}\int_1^x 2f''(1)(\xi-1){\rm d}\xi + \int_x^y f''(\xi){\rm d}\xi \nonumber\\
&\leq  \delta' f''(1)  + f'(y) - f'(x) \nonumber
\\
&\leq 3\bigl(f'(y) - f'(x)\bigr) . \nonumber
\end{align}
Here in the last inequality, we used \eqref{eq:case-2.1}.

For \ref{enum-lem4.5-C.3},
\eqref{eq:Df-n=2} can be obtained with $\alpha_{\delta_3} = \frac{\alpha}{4}$. In fact, 
the right hand side of \eqref{eq:Df-n=2} can be bounded as
\begin{align}
\label{eq:lemm4-lhs}
   (\frac{1}{x} - \frac{1}{y})\bigl( \frac{f(x)}{1 - x} - \frac{f(y)}{1 - y}\bigr)   \leq \frac{f(x)}{x},
\end{align}
by taking $y=1$ on the left hand side, since both $\frac{1}{y}$ and $\frac{f(y)}{1 - y}$ are decreasing. For the left hand side of \cref{eq:Df-n=2}, both $f'(x)$ and $f'(y)$ are negative. Using \eqref{eq:GDC-delta} and \eqref{eq:GDC-delta-2} leads to
\begin{align}
\label{eq:f'x-f'y}
  -f'(x)
&=   \int_x^1 f''(\xi) {\rm d}\xi \geq \int_{1-2\delta'}^{1}f''(\xi) \dd\xi \geq \delta' f''(1) \geq -2f'(1 - \delta) \geq -2f'(y).
\end{align}
Using \cref{eq:f'x-f'y},\cref{eq:lemm4-lhs}, and \cref{eq:Df-n=2-necc-1} leads to
\begin{align*}
  \Bigl(f'(x) -f'(y) \Bigr)^2 \geq \frac{f'(x)^2}{4} \geq \frac{\alpha}{4}\frac{f(x)}{x}\geq \frac{\alpha}{4}(\frac{1}{x} - \frac{1}{y})\bigl( \frac{f(x)}{1 - x} - \frac{f(y)}{1 - y}\bigr) .
\end{align*}
Finally, choosing $\delta>0$ as in \cref{eq:GDC-delta,eq:GDC-delta-2} and combining all three cases, we conclude that \eqref{eq:Df-n=2} holds with $\alpha_\delta = \min\{\frac{f''(1)(1-2\delta')}{4},\frac{\delta'f''(1)}{6}, \frac{\alpha}{4}\}$. 
\end{proof}
\bigskip

\begin{proof}[Proof of \Cref{lem-2-point-to-3-point}]
In this lemma, we assume that there exists $0 < \delta < 1$ and $0 < \alpha_\delta \leq \alpha $ such that
\begin{align}
\label{appendix-A-1}
\bigl(f'(y) - f'(x)\bigr)^2  \geq  \alpha_\delta(\frac{1}{x} - \frac{1}{y})\bigl( \frac{f(x)}{1 - x} - \frac{f(y)}{1 - y}\bigr),
\end{align}
holds for all $0 < x < 1 < y$ (\Cref{cond:n=2-GDC}), and for any $x \in (1-\delta, 1+\delta)$ and $ y > 0$ (\Cref{cond-b-GDC}). We also have the constraint
\begin{align}
\label{appendix-A-2-constraint}
    &\sum_{i=1}^3 \rho_i = 1,  \quad \sum_{i=1}^3 \frac{\rho_i}{x_i} = 1, \quad
    \rho_i > 0, \quad x_i > 0.
\end{align}
Our goal is to show
\begin{align}
\label{appendix-3-point-inequality}
 \sum_{1\leq i,j\leq 3}  \rho_i\rho_j\Bigl(f'(x_i)-f'(x_j)\Bigr)^2 \geq \alpha_f\sum_{1\leq i,j\leq 3}  \rho_i\rho_j(\frac{1}{x_i} - \frac{1}{x_j}) \Bigl(\frac{f(x_i)}{1-x_i} - \frac{f(x_j)}{1-x_j}\Bigr) .
\end{align}

We assume that $x_1 \leq x_2 \leq x_3$, then due to the constraint \eqref{appendix-A-2-constraint}, we have either $x_1=x_2=x_3=1$ or $x_1 < 1 < x_3$. 

For the first scenario, \eqref{appendix-3-point-inequality} holds with any $\alpha_f$. For the second scenario, when $1 - \delta < x_2 < 1+ \delta$, the relationships between pairs $(x_1, x_2)$, $(x_1, x_3)$ and $(x_2, x_3)$ fall into the conditions described in \eqref{appendix-A-1}. Thus, \eqref{appendix-3-point-inequality} holds with $\alpha_f = \alpha_\delta$.
Now, we only need to verify the following cases:

\begin{enumerate}[leftmargin=4em,label=\textbf{Case \arabic*}]
    \item \label{enum-C.1} {\hspace{-0.5em}\bf .} $0 < x_1 < 1 < 1+\delta\leq x_2 \leq x_3$,
    \item \label{enum-C.2}{\hspace{-0.5em}\bf .} $0 < x_1 \leq x_2 \leq 1- \delta  < 1 < x_3$.
\end{enumerate}

For \ref{enum-C.1}, we will first show that
\begin{equation}
\label{app-eq:scenario-1-gdc}
      \rho_1\rho_3(\frac{1}{x_1}-\frac{1}{x_3}) \Bigl(\frac{f(x_1)}{1 - x_1} - \frac{f(x_3)}{1 - x_3}\Bigr)  \geq
      \delta \rho_2\rho_3(\frac{1}{x_2}-\frac{1}{x_3}) \Bigl(\frac{f(x_2)}{1 - x_2} - \frac{f(x_3)}{1 - x_3}\Bigr).
     \end{equation}
In fact, the convexity of $f(x)$ and the condition $f(1)=0$ imply that $\frac{f(x)}{1 - x}$ is decreasing. Consequently, $\frac{f(x_1)}{1 - x_1} - \frac{f(x_3)}{1 - x_3}  \geq \frac{f(x_2)}{1 - x_2} - \frac{f(x_3)}{1 - x_3} \geq 0 $, hence \cref{app-eq:scenario-1-gdc} can be obtained if we can show
      $\rho_1(\frac{1}{x_1}-\frac{1}{x_3}) \geq
      \delta \rho_2 (\frac{1}{x_2}-\frac{1}{x_3}).$
This can be achieved using the following estimate:
\begin{equation*}
      \rho_1(\frac{1}{x_1} - \frac{1}{x_3}) \geq \rho_1(\frac{1}{x_1} - 1)= \rho_2(1 - \frac{1}{x_2}) + \rho_3(1 - \frac{1}{x_3}) \geq \rho_2(1 - \frac{1}{x_2}) \geq
      \delta \rho_2(\frac{1}{x_2} - \frac{1}{x_3}).
     \end{equation*}
Here the equality follows from \cref{appendix-A-2-constraint}, and the last inequality uses the fact that $x_2\geq 1 + \delta$ and $x_3 > 0$.

With \eqref{app-eq:scenario-1-gdc}, we can prove that \eqref{appendix-3-point-inequality} holds with $\alpha_f = \frac{\alpha_\delta \delta}{1 + \delta}$. To do so, we denote
\begin{equation}
\label{app-eq:condition-b-GDC-F}
    F_{i,j} = \rho_i\rho_j(\frac{1}{x_i}-\frac{1}{x_j}) \Bigl(\frac{f(x_i)}{1 - x_i} - \frac{f(x_j)}{1 - x_j}\Bigr),
\end{equation}
then we have
\begin{equation*}
\begin{split}
    \sum_{1 \leq i< j\leq 3} \rho_i \rho_j \Bigl(f'(x_i) -  f'(x_j)\Bigr)^2
    &\geq \alpha_\delta (F_{3,1} + F_{2,1}) \quad (\textrm{using  \eqref{appendix-A-1}})
    \\
    &\geq \alpha_\delta (\frac{\delta}{1 + \delta}F_{3,1} + \frac{\delta}{1 + \delta}F_{3,2} + F_{2,1})
    \quad (\textrm{using \eqref{app-eq:scenario-1-gdc}})
    \\
    &\geq \frac{\alpha_\delta \delta}{1 + \delta} (F_{3,1} + F_{3,2} + F_{2,1}),
\end{split}
\end{equation*}
which shows that \eqref{appendix-3-point-inequality} holds with $\alpha_f = \frac{\alpha_\delta \delta}{1 + \delta}$.

For \ref{enum-C.2}, we will first show that
\begin{equation}
\label{app-eq:scenario-2-gdc}
      \rho_1\rho_3(\frac{1}{x_1}-\frac{1}{x_3}) \Bigl(\frac{f(x_1)}{1 - x_1} - \frac{f(x_3)}{1 - x_3}\Bigr)  \geq
      \delta \rho_2\rho_1(\frac{1}{x_1}-\frac{1}{x_2}) \Bigl(\frac{f(x_1)}{1 - x_1} - \frac{f(x_2)}{1 - x_2}\Bigr)  .
     \end{equation}
In fact, the convexity of $f(x)$ and the condition $f(1)=0$ imply that $\frac{f(x)}{1 - x}$ is decreasing. Consequently, $\frac{f(x_1)}{1 - x_1} - \frac{f(x_3)}{1 - x_3}  \geq \frac{f(x_2)}{1 - x_2} - \frac{f(x_3)}{1 - x_3} \geq 0$. Combining with $\frac{1}{x_1}-\frac{1}{x_3} \geq \frac{1}{x_1}-\frac{1}{x_2} \geq 0$, this implies that \eqref{app-eq:scenario-2-gdc} holds if $\rho_3  \geq
      \delta \rho_2$. This can be derived by the following estimate:
\begin{equation*}
      \rho_3\geq \rho_3(1 - \frac{1}{x_3}) = \rho_1(\frac{1}{x_1} - 1) + \rho_2(\frac{1}{x_2} - 1) \geq \rho_2(\frac{1}{x_2} - 1) \geq
      \delta \rho_2.
     \end{equation*}
Here, in the last inequality, we used the fact that $x_2 \leq 1 - \delta$.
Next, we will prove that \eqref{appendix-3-point-inequality} holds with $\alpha_f = \frac{\alpha_\delta \delta}{1 + \delta}$. Using the definition in \eqref{app-eq:condition-b-GDC-F}, we have
\begin{equation*}
\begin{split}
    \sum_{1 \leq i< j\leq 3} \rho_i \rho_j \Bigl(f'(x_i) -  f'(x_j)\Bigr)^2
    &\geq \alpha_\delta (F_{3,1} + F_{3,2}) \quad (\textrm{using  \eqref{appendix-A-1}})
    \\
    &\geq \alpha_\delta (\frac{\delta}{1 + \delta}F_{3,1} + \frac{\delta}{1 + \delta}F_{2,1} + F_{3,2})
    \quad (\textrm{using \eqref{app-eq:scenario-2-gdc}})
    \\
    &\geq \frac{\alpha_\delta \delta}{1 + \delta} (F_{3,1} + F_{3,2} + F_{2,1}).
\end{split}
\end{equation*}
Combining both \ref{enum-C.1} and \ref{enum-C.2}, we complete the proof.
\end{proof}

\section{Lemmas for Proof of \Cref{theorem:generalized-ineq}}
\label{proof:dual-ineq-ns-cond}
In this section, we provide the proofs of \Cref{lem:generalized-gdc-1} and \cref{lem:generalized-f''(1)}, which are needed in the proof of \Cref{theorem:generalized-ineq}.
\begin{proof}[Proof of \Cref{lem:generalized-gdc-1}]
The convexity of $f$ leads to that
\begin{align}\label{f1}
f(y)\geq f(x)+(y-x)f'(x),\quad \forall\ y>0,\ x>0.
\end{align}
Adding $ \bigl(f'(y)-f'(x)\bigr)(y-x)$ to both sides gives
\begin{align}\label{f2}
\bigl(f'(y)-f'(x)\bigr)(y-x)\geq (y-x)f'(y)-f(y)+f(x)\geq0,\quad \forall\ y>0,\ x>0.
\end{align}
Then, for $x>0$, using \cref{f2}, we obtain
\begin{align}
\bigl(f'(\frac{1}{\mu})-f'(\frac{1}{x})\bigr)(\frac{1}{\mu}-\frac{1}{x})  \geq & (\frac{1}{\mu}-\frac{1}{x})f'(\frac{1}{\mu})-f(\frac{1}{\mu})+f(\frac{1}{x})\geq0, \label{lem:generalized-gdc-1-1}
\\
\bigl(f'(\frac{1}{\mu})-f'(\frac{1}{x})\bigr)(x^2-\mu^2) =&\bigl(f'(\frac{1}{\mu})-f'(\frac{1}{x})\bigr)(\frac{1}{\mu}-\frac{1}{x})\mu x(\mu+x) \nonumber
\\ \geq&\bigl(f'(\frac{1}{\mu})-f'(\frac{1}{x})\bigr)(\frac{1}{\mu}-\frac{1}{x})\mu^2 x \nonumber
\\ \geq&\bigl((\frac{1}{\mu}-\frac{1}{x})f'(\frac{1}{\mu})-f(\frac{1}{\mu})+f(\frac{1}{x})\bigr)\mu^2 x \nonumber
\\ =&\mu(x-\mu)f'(\frac{1}{\mu})-\mu^2 x\bigl(f(\frac{1}{\mu})-f(\frac{1}{x})\bigr).\label{lem:generalized-gdc-1-2}
\end{align}
The first inequality follows from $\bigl(f'(\frac{1}{\mu})-f'(\frac{1}{x})\bigr)(\frac{1}{\mu}-\frac{1}{x}) \geq 0$, and the second from \cref{lem:generalized-gdc-1-1}.
Taking $x=\rhopo/\rho$ in \eqref{lem:generalized-gdc-1-2} leads to
\begin{align*}
\Bigl(f'(\frac{1}{\mu})-f'(\frac{\rho}{\rhopo})\Bigr) \Bigl(\frac{\rhopo^2}{\rho^2}-\mu^2\Bigr)\geq \mu(\frac{\rhopo}{\rho}-\mu)f'(\frac{1}{\mu})-\frac{\mu^2 \rhopo}{\rho}\Bigl(f(\frac{1}{\mu})-f(\frac{\rho}{\rhopo})\Bigr),
\end{align*}
which leads to the desired inequality. This completes the proof.
\end{proof}

\begin{proof}[Proof of \cref{lem:generalized-f''(1)}]
Since $f \in C^2$ and $f''(1) > 0$, there exists $0 < \delta_f < \frac{1}{4}$, such that for any $x \in (\frac{1}{1+2\delta_f}, \frac{1}{1-2\delta_f})$, we have $\frac{f''(1)}{2} \leq f''(x).$

For the left hand side of \eqref{eq:generalized-f''(1)}, for any $x,y$ satisfying $y>x>0$ and $\{x,y\}\cap[1,1+\delta_f]\neq\emptyset $, we have
\begin{align}
    \label{eq:f'(1/x)-f'(1/y)}&f'(\frac{1}{x}) - f'(\frac{1}{y}) = \int_{1/y}^{1/x}f''(\xi){\rm d}\xi
    \geq \frac{f''(1)}{2}\int_{(\frac{1}{y},\frac{1}{x})\cap(\frac{1}{1+2\delta_f}, \frac{1}{1-2\delta_f})}{\rm d}\xi
    \\
    \geq& \begin{cases}
       \frac{f''(1)}{2}\Bigl(\frac{1}{x} - \frac{1}{y}\Bigr) \geq \frac{f''(1)}{1+2\delta_f}\frac{y-x}{y+x} & 1-2\delta_f < x < y< 1+2\delta_f
       \\
       \frac{f''(1)}{2}\Bigl(\frac{1}{1-2\delta_f} - \frac{1}{y}\Bigr) \geq \frac{f''(1)\delta_f}{1-2\delta_f}\frac{y-x}{y+x} & 0<x\leq 1-2\delta_f <1\leq y\leq 1+\delta_f
       \\
       \frac{f''(1)}{2}\Bigl(\frac{1}{x} - \frac{1}{1 + 2\delta_f}\Bigr)\geq \frac{f''(1)\delta_f}{2(1+2\delta_f)(1+\delta_f)} \frac{y-x}{y+x} & 1\leq x\leq  1+\delta_f < 1+2\delta_f\leq y
    \end{cases}\nonumber
    \\
    \geq& \alpha_{\chi^2,f}' \frac{y-x}{y+x},\nonumber
\end{align}
where $\alpha_{\chi^2,f}' = \frac{f''(1)\delta_f}{2(1+2\delta_f)(1+\delta_f)} > 0$. These cases  cover all situations with $y>x>0,\ \{x,y\}\cap[1,1+\delta_f]\neq\emptyset$.

Then we consider \eqref{eq:generalized-f''(1)} with $\mu\in[1,1+\delta_f]$. When $0 < x < \mu$, by using \eqref{eq:f'(1/x)-f'(1/y)}, we have
\begin{equation}
\begin{split}
\bigl(f'(\frac{1}{\mu}) -  f'(\frac{1}{x})\bigr)(x^2 - \mu^2)  \geq \alpha_{\chi^2,f}'\frac{\frac{1}{x} - \frac{1}{\mu}}{\frac{1}{x} + \frac{1}{\mu}} (\mu^2 - x^2) = \alpha_{\chi^2,f}' (\mu - x)^2.
\end{split}
\end{equation}
When $\mu < x$, by using \eqref{eq:f'(1/x)-f'(1/y)}, we have
\begin{equation}
\begin{split}
\bigl(f'(\frac{1}{\mu}) -  f'(\frac{1}{x})\bigr)(x^2 - \mu^2)  \geq \alpha_{\chi^2,f}'\frac{\frac{1}{\mu} - \frac{1}{x}}{\frac{1}{x} + \frac{1}{\mu}} (x^2 - \mu^2) = \alpha_{\chi^2,f}' (\mu - x)^2.
\end{split}
\end{equation}
If $x=\mu$ then \eqref{eq:generalized-f''(1)} becomes $0\geq0$ and is still true. This completes the proof.
\end{proof}

\bibliographystyle{siamplain}
\bibliography{references}

\end{document}